\newtheorem{theorem}{Theorem}[section]
\newtheorem{corollary}[theorem]{Corollary}
\newtheorem{lemma}[theorem]{Lemma}
\theoremstyle{definition}
\newtheorem{definition}[theorem]{Definition}
\theoremstyle{remark}
\newtheorem{remark}[theorem]{Remark}
\numberwithin{equation}{section}
\DeclareMathOperator{\RE}{Re}
\begin{document}
	
\title[Non-Univalent functions and a parabolic region]{On a Class of Non-Univalent functions Associated with a Parabolic Region}
	\thanks{The first author is supported by Delhi Technological University, New Delhi}
	\author[Mridula Mundalia]{Mridula Mundalia}
	\address{Department of Applied Mathematics, Delhi Technological University, Delhi--110042, India}
	\email{mridulamundalia@yahoo.co.in}
	\author{S. Sivaprasad Kumar}
	\address{Department of Applied Mathematics, Delhi Technological University, Delhi--110042, India}
	\email{spkumar@dce.ac.in}

	\subjclass[2010]{30C45, 30C80}
	
	\keywords{Univalent functions, Starlike functions, Logarithmic function, Radius problems, Differential inequality}
\begin{abstract}
In the present investigation, we introduce and study the geometric properties of a class of analytic functions, associated with a parabolic region majorly lying in the left-half plane. Further we establish radius and majorization results for the class under study with pictorial illustrations of some of its special cases. Also we derive some sufficient conditions for the class under consideration.
\end{abstract}
	
\maketitle
	
\section{Introduction}
Let $\mathcal{A}$ be the class of analytic functions $f(z)$ defined on the open unit disc $\mathbb{D}=\left\{z:|z|<1\right\}$ with the normalization $f(0)=0$ and $f'(0)=1.$ Assume $\mathcal{S}\subset\mathcal{A}$ to be the class of univalent functions. Let $f(z)$ and $g(z)$ be two analytic functions, then $f(z)$ is said to be subordinate to $g(z),$ symbolically $f\prec g,$ if there exist a Schwarz function $w(z)$ in $\mathbb{D}$ with $w(0)=0,$ such that $f(z)=g(w(z)).$ Additionally, if $g(z)$ is univalent in $\mathbb{D},$ then $f\prec g$ if and only if $f(\mathbb{D}_{r})\subset g(\mathbb{D}_{r}),$ where $\mathbb{D}_{r}=\{z:|z|<r<1\}.$ Recall that a function $f\in\mathcal{A}$ is starlike if $f(\mathbb{D})$ is starlike with respect to 0. Analytically, a function $f\in \mathcal{A}$ is starlike if  
\[\frac{zf'(z)}{f(z)}\prec \frac{1+z}{1-z}.\] 
We denote this class by $\mathcal{S}^{*}.$ Ma and Minda \cite{Ma & Minda} gave a unified representation for various subclasses of starlike functions by replacing the superordinate function $(1+z)/(1-z)$ with a more general function $\phi(z),$ and the corresponding class is denoted by $\mathcal{S}^{*}(\phi).$ Here $\phi(z)$ is chosen such that it is univalent, starlike with respect to $\phi(0)=1$ and $\operatorname{Re}\phi(z)>0$ with  $\phi'(0)>0,$ also  $\phi(\mathbb{D})$ is symmetric about real axis. Several Ma-Minda subclasses have been studied previously (See Table \ref{EQNTable2}). 
In contrast to $\mathcal{S}^{*}(\phi),$  Uralegaddi \cite{Uralegaddi(1994)} introduced and studied the class  \[\mathcal{M}(\beta)=\{f\in\mathcal{A}:\operatorname{Re}(zf'(z)/f(z))<\beta, \text{ }\beta>1\}.\] Note that $\mathcal{M}(\beta)\nsubseteq\mathcal{S}^{*}$ and also contains non-univalent functions. In 2006, Ravichandran et al. \cite{Ravi n Silverman(2006)} computed the radius of starlikeness for the class $\mathcal{M}(\beta)$. Motivated by the above class, Kumar et al. \cite{Gangania n Kumar(2021)Trans} 
made a systematic study of the class $\mathcal{F}(\psi)$ containing non-univalent functions, given by
\[\mathcal{F}(\psi):=\left\{f\in\mathcal{A}:\frac{zf'(z)}{f(z)}-1\prec \psi(z)\right\},\]
where $\psi(z)$ is an analytic univalent function such that $\psi(\mathbb{D})$ is starlike with respect to $0$ and $\psi(0)=0.$  In general $\mathcal{F}(\psi)\nsubseteq\mathcal{S}^{*}(\phi).$ Further if $\phi(z)=1+\psi(z) \prec (1+z)/(1-z),$ then $\mathcal{F}(\psi)$ reduces to $\mathcal{S}^{*}(1+\psi).$ Here below we give a list of functions $\psi_{i}(z),$ $i=1,\ldots,4,$ which are considered in context of the above class $\mathcal{F}(\psi),$
\begin{align}\label{EQN420}
\psi_{i}(z)=
\left\{    
    \begin{array}{ll}
     \gamma z(1+\eta z)^{-2} & ,i=1 \\
     z(1-\alpha z^{2})^{-1} & ,i=2 \\
     z(1-z)^{-1}(1+\beta z)^{-1}  & ,i=3 \\
     (A-B)^{-1}\log((1+Az)/(1+Bz)) & ,i=4,
    \end{array}
\right.
\end{align}
where $A = \alpha e^{i\tau},B=\alpha e^{-i\tau}$ with $\tau\in(0,\pi/2],$ and $\alpha,\beta,\eta\in(0,1],\gamma>0$  (see \cite{Kargar(2019),Gangania n Kumar(2021)Trans,Kumar n Yadav(2022),Masih n Ebadian(2019)}). Kumar and Gangania in \cite{Gangania n Kumar(2021)Trans}, 
introduced the class $\mathcal{S}_{\gamma}(\eta)=\mathcal{F}(\psi_{1})$ and obtained the radius of starlikeness. Cho et al. \cite{Cho n Ravi(2018)} dealt with certain sharp radius problems for the class $\mathcal{BS}(\alpha)=\mathcal{F}(\psi_{2}).$ Infact Masih et al. \cite{Masih n Ebadian(2019)} studied the class $\mathcal{S}_{cs}(\beta)=\mathcal{F}(\psi_{3}),$ where $0 \leq \beta < 1,$ discussed the growth theorem and established sharp estimates of logarithmic coefficients for $0\leq \beta \leq 1/2.$ Further for $1/2<\beta\leq 1,$ the class $\mathcal{S}_{cs}(\beta)$ contains non-univalent functions, infact for $0\leq\beta \leq 1/2$ the class $\mathcal{S}_{cs}(\beta)\subset \mathcal{S}^{*}.$ In 2022, Kumar et al. \cite{Kumar n Yadav(2022)} introduced the class $\mathcal{F}(A,B)=\mathcal{F}(\psi_{4})$ and established some radii results.  
\begin{figure}[ht]
\begin{framed} 
   \centering
 \subfloat[\centering]{\includegraphics[width=0.30\textwidth]{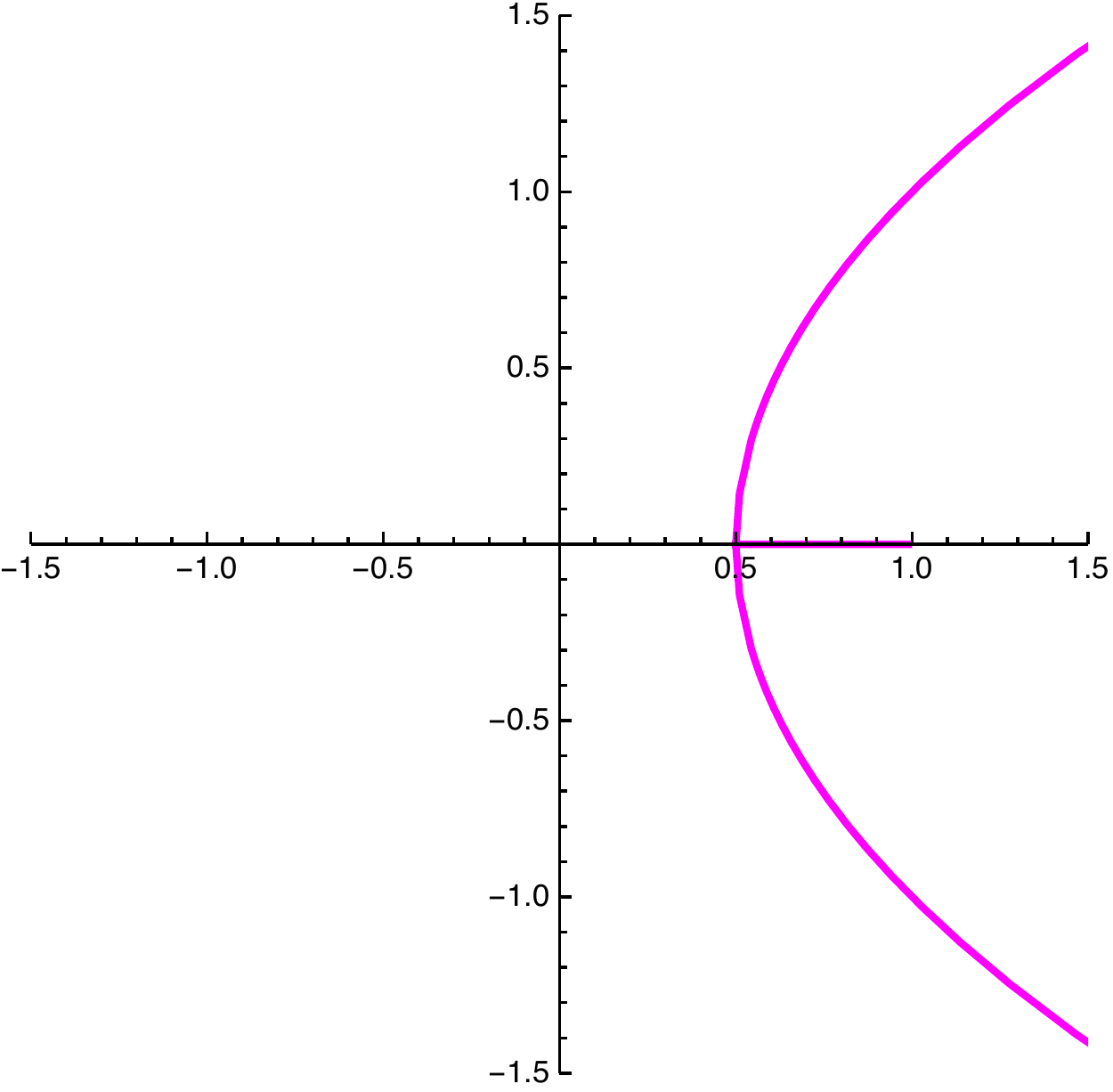}}\qquad \qquad \qquad \qquad
  \subfloat[\centering]{\includegraphics[width=0.30\textwidth]{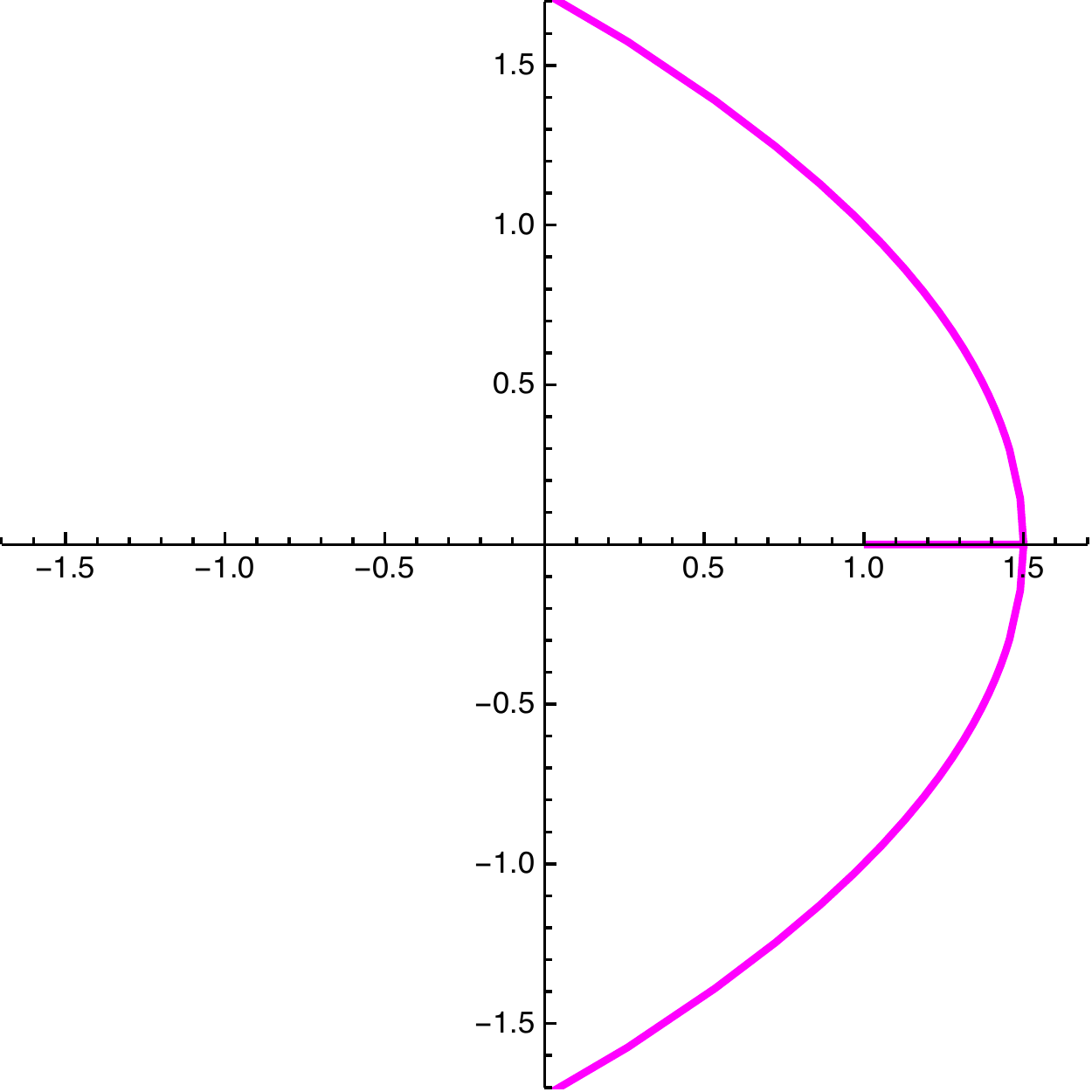}}\qquad \qquad \qquad  \qquad
  \caption{Graphs representing different parabolas with a common focus (1,0) \\ (a) $1+ \mathcal{P}_{0,\pi}(\partial\mathbb{D}),$ (b) $1+ \mathcal{P}_{0,0}(\partial\mathbb{D})$.}
\label{fig:7}
\end{framed}
\end{figure}  
\\ Motivated  essentially by the above classes and observations, we now study a subclass of $\mathcal{A}$ containing non-univalent functions. For $\tau,\theta\in(-\pi,\pi],$ the transformation \[\omega=((2e^{i\theta/2}\sqrt{2}/\pi)(\tan^{-1}(e^{i(\tau - \pi/2)}\sqrt{z})))^{2}\] maps the boundary of $\mathbb{D}$ onto a parabola (see \textbf{Fig.} \ref{fig:7}), given by 
\small{
\begin{align*}
{\mathcal{P}}_{\tau,\theta}(z)&:=\frac{2e^{i(\theta+\pi)}}{\pi^{2}}\left(\log\left(\frac{1+e^{i\tau} \sqrt{z}}{1-e^{i\tau}\sqrt{z}}\right)\right)^{2} \nonumber \\& =  \frac{8e^{i(\theta+\pi)}}{\pi^{2}} \sum _{n=1}^\infty \left(\frac{e^{2i\tau n}}{n}\sum _{k=0}^{n-1} \frac{1}{2 k+1}\right)z^{n},
\end{align*}}where the branch of $\sqrt{z}$ is chosen so that $\operatorname{Im}\sqrt{z}\geq 0.$ Note that the function $\mathcal{P}_{\tau,\theta}(z)$ is univalent in $\mathbb{D}$ and $1+\mathcal{P}_{0,\pi}(z)$ (see \textbf{Fig.} \ref{fig:7}(a)) is the parabolic function introduced independently by Ma-Minda \cite{Ma n Minda(1993)} and Ronning \cite{Ronning(UCV 1993)}, and used it 
extensively in context of parabolic starlike functions (see \cite{Kanas(2000),Ronning(UCV 1993)}). Further, Kanas \cite{Kanas(2003)} discussed some differential subordination techniques  and later  studied admissibility results \cite{Kanas(2006)} involving $1+\mathcal{P}_{0,\pi}(z).$ Note that the boundary of the function $\mathcal{P}_{0,0}(z)$ is a horizontal parabola with an opening in the left-half plane (see \textbf{Fig.} \ref{fig:7}(b)). In the present study, we consider the function $1+\mathcal{P}_{0,0}(z),$ in context of  non-univalent functions, which is completely different from the way Ronning, Ma-Minda and Kanas (see \cite{Kanas(2006),Kanas(2003),Ma n Minda(1993),Ronning(UCV 1993)}) handled parabolic regions. Obviously, the other choices of $\tau$ and $\theta$ leads to oblique parabolic regions, which is beyond the scope of our present study and therefore it is skipped here, as it needs to be handled differently.

For brevity, let us assume ${\mathcal{P}_{0}}(z):={\mathcal{P}_{0,0}}(z)$ and $\mathcal{LP}(z):=1+{\mathcal{P}_{0}}(z),$ then the horizontal parabolic region $\mathcal{LP}(z)$ (see \textbf{Fig.} \ref{fig:1}), is given by
\begin{align*}
   \Omega_{\mathcal{LP}}:=\{\omega\in\mathbb{C}:(\operatorname{Im}\omega)^{2}<3 - 2 \operatorname{Re}\omega \text{ or } |1-\omega|<2-\operatorname{Re}\omega\}.
\end{align*}
Since the major part of the above region lies in the left half plane, it is interesting to find the optimal radius of the domain disc for which it is fully mapped into the right half plane. Here below we define a class of analytic functions consisting of non-univalent functions. 
\begin{figure}
\begin{framed}
\includegraphics[height=2.2in,width=4.4in]{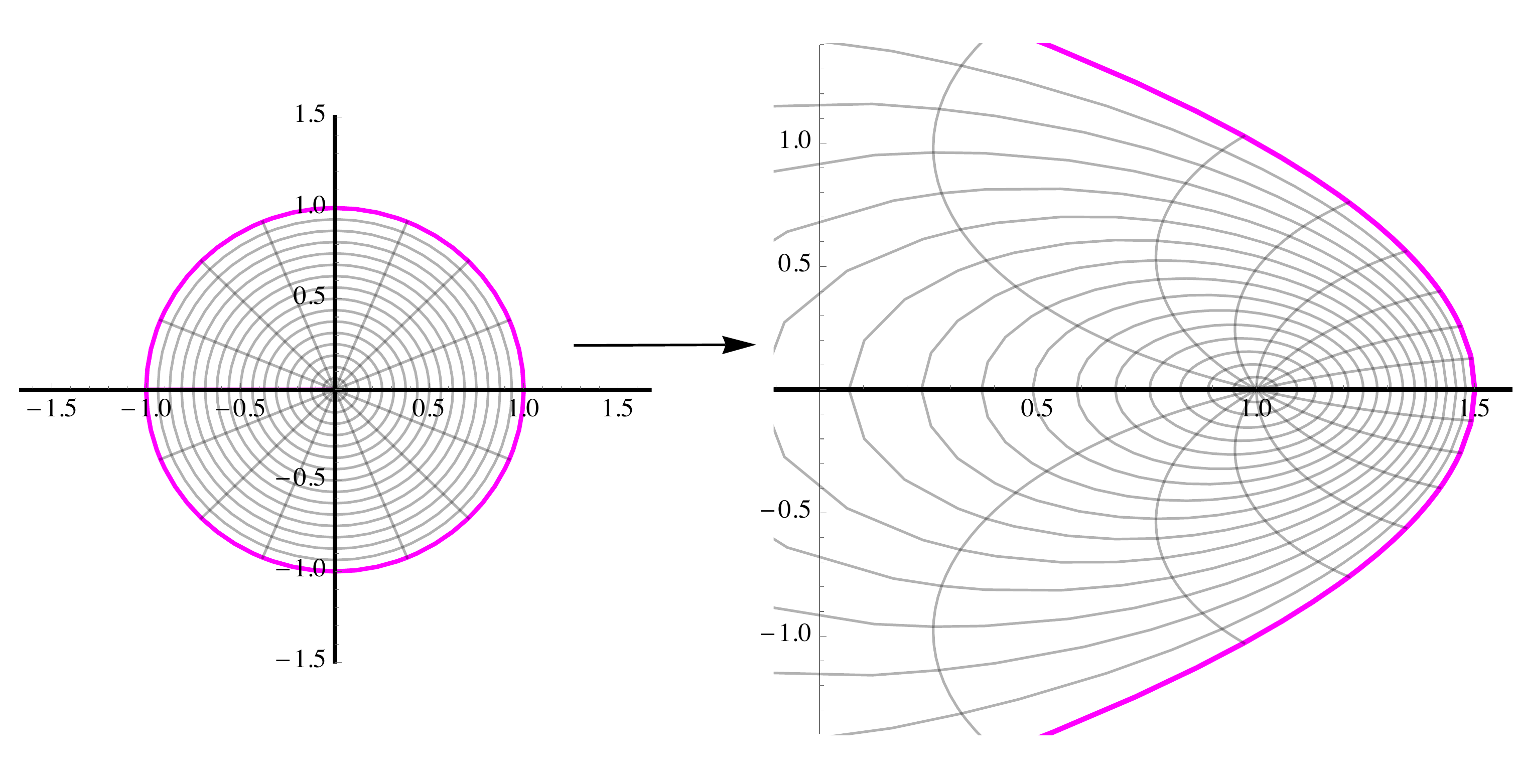}
\caption {Image of open unit disk $\mathbb{D}$ under the mapping $\mathcal{LP}(z).$}
\label{fig:1}
\end{framed}
\end{figure}
\begin{definition}
Let the class $\mathcal{F}_{\mathcal{LP}}$ consist of functions $f\in\mathcal{A}$ satisfying the subordination 
\[\frac{zf'(z)}{f(z)}\prec  \mathcal{LP}(z)=1-\frac{2}{\pi^{2}}\left(\log\left(\frac{1+\sqrt{z}}{1-\sqrt{z}}\right)\right)^{2}.\]
\end{definition}
Note that if $f_{0}\in\mathcal{F}_{\mathcal{LP}},$ then it can be expressed as 
\begin{equation}\label{EQN42}
    f_{0}(z) = z  \left( \exp \int_{0}^{z} \frac{\mathcal{P}_{0}(t)}{t}dt \right),
\end{equation}\noindent which acts as an extremal function for many radius results. Evidently, the subclasses of starlike functions $\mathcal{S}^{*}_{SG},$ $\mathcal{S}^{*}_{\mathcal{RL}}$ and $\mathcal{SL}^{*}(\alpha)$ $(0\leq\alpha<1)$ (see Table \ref{EQNTable2}) 
are contained in $\mathcal{F}_{\mathcal{LP}}
,$ however $\mathcal{F}_{\mathcal{LP}}\nsubseteq \mathcal{S}.$  
In section 2, we examine some geometrical properties of the function $\mathcal{LP}(z).$

\section{Main Results}

\subsection{Geometric Properties of the function $\mathcal{LP}(z)$} 
In Lemma \ref{L41}, we establish the maximum and minimum bounds of real part of the function $\mathcal{P}_{0}(z).$
\begin{lemma}\label{L41}
Let $z\in\mathbb{D}_{r}=\{z:|z|=r\},$ then for each $0\leq r<1$ and $\alpha\in(-\pi,\pi],$ we have
\[{\mathcal{P}}_{0}(r)\leq\operatorname{Re}{\mathcal{P}}_{0}(r e^{i\alpha})\leq {\mathcal{P}}_{0}(-r).\] 
\end{lemma}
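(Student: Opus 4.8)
The plan is to parametrize the real part of $\mathcal{P}_0(re^{i\alpha})$ as a function of $\alpha$ and show it is monotone on $[0,\pi]$, so that the extrema occur at the endpoints $\alpha=0$ and $\alpha=\pi$. Write $z=re^{i\alpha}$, so that $\sqrt{z}=\sqrt{r}\,e^{i\alpha/2}$ with the chosen branch. Using the series representation
\[
\mathcal{P}_0(z)=-\frac{8}{\pi^2}\sum_{n=1}^{\infty}\left(\frac{1}{n}\sum_{k=0}^{n-1}\frac{1}{2k+1}\right)z^n,
\]
we get
\[
\operatorname{Re}\mathcal{P}_0(re^{i\alpha})=-\frac{8}{\pi^2}\sum_{n=1}^{\infty}c_n\,r^n\cos(n\alpha),\qquad c_n:=\frac{1}{n}\sum_{k=0}^{n-1}\frac{1}{2k+1}>0.
\]
Differentiating in $\alpha$ gives $\partial_\alpha\operatorname{Re}\mathcal{P}_0(re^{i\alpha})=\frac{8}{\pi^2}\sum_{n=1}^{\infty}n c_n r^n\sin(n\alpha)$. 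The crux is to prove this derivative is nonnegative for $\alpha\in[0,\pi]$ and every $r\in[0,1)$; then $\operatorname{Re}\mathcal{P}_0$ increases from its value at $\alpha=0$, namely $\mathcal{P}_0(r)$, to its value at $\alpha=\pi$, namely $\mathcal{P}_0(-r)$, which gives exactly the claimed two-sided bound (and by the even symmetry $\alpha\mapsto-\alpha$ it suffices to treat $[0,\pi]$).

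For the sign of the $\alpha$-derivative I would avoid the series and instead go back to the closed form. Set $u=e^{i\tau}\sqrt{z}$ with $\tau=0$, i.e. $u=\sqrt{r}\,e^{i\alpha/2}$, and write
\[
\mathcal{P}_0(z)=-\frac{2}{\pi^2}\bigl(\log\tfrac{1+u}{1-u}\bigr)^2=-\frac{2}{\pi^2}(L)^2,\qquad L:=\log\frac{1+u}{1-u}.
\]
Then $\operatorname{Re}\mathcal{P}_0=-\frac{2}{\pi^2}\bigl((\operatorname{Re}L)^2-(\operatorname{Im}L)^2\bigr)$. Writing $L=X+iY$ with $X=\frac12\log\frac{|1+u|^2}{|1-u|^2}$ and $Y=\arg\frac{1+u}{1-u}$, one computes $X$ and $Y$ explicitly in terms of $s:=\sqrt{r}$ and $\beta:=\alpha/2$: with $u=s e^{i\beta}$,
\[
|1\pm u|^2=1\pm 2s\cos\beta+s^2,\qquad Y=\arctan\frac{s\sin\beta}{1+s\cos\beta}+\arctan\frac{s\sin\beta}{1-s\cos\beta}.
\]
So the task reduces to showing $g(\beta):=Y(\beta)^2-X(\beta)^2$ is increasing on $[0,\pi/2]$ for each fixed $s\in[0,1)$. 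At $\beta=0$ we have $Y=0$ and $X=\log\frac{1+s}{1-s}>0$, so $g(0)=-X(0)^2<0$; at $\beta=\pi/2$ we have $X=0$ and $g(\pi/2)=Y(\pi/2)^2\ge0$, consistent with the parabola opening leftward. I expect $g'(\beta)\ge0$ to follow after differentiating: $X$ decreases in $\beta$ (since $|1+u|$ shrinks and $|1-u|$ grows as $\beta$ increases from $0$) while $|Y|$ increases, and both $-X\ge0$... — wait, $X$ can change sign, so more care is needed; the honest approach is to compute $g'=2YY'-2XX'$ and show $YY'\ge XX'$ pointwise, using the derivative formulas $X'=\frac{-2s^2\sin 2\beta}{(1+2s\cos\beta+s^2)(1-2s\cos\beta+s^2)}\cdot(\text{something})$ and a similar expression for $Y'$; in fact $X'+iY' = \frac{d}{d\beta}L = \frac{2u'}{1-u^2}=\frac{2 i s e^{i\beta}}{1-s^2e^{2i\beta}}$, which cleanly gives $X'=\operatorname{Re}\frac{2ise^{i\beta}}{1-s^2e^{2i\beta}}$ and $Y'=\operatorname{Im}\frac{2ise^{i\beta}}{1-s^2e^{2i\beta}}$.

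Using $L'=X'+iY'=\dfrac{2ise^{i\beta}}{1-s^2e^{2i\beta}}$ one gets the compact identity
\[
\partial_\beta\,g(\beta)=2\operatorname{Re}\!\bigl(\overline{L}\,\cdot\, i\,(X'+iY')\,\overline{i}\bigr)\ \text{type expression};
\]
more precisely, since $\frac{d}{d\beta}(L^2)=2LL'$ and $\operatorname{Re}\mathcal{P}_0=-\frac{2}{\pi^2}\operatorname{Re}(L^2)$, we have
\[
\partial_\beta\operatorname{Re}\mathcal{P}_0(re^{i\alpha})\cdot\tfrac12 = -\frac{2}{\pi^2}\operatorname{Re}(LL')=-\frac{2}{\pi^2}\operatorname{Re}\!\left(L\cdot\frac{2ise^{i\beta}}{1-s^2e^{2i\beta}}\right).
\]
The main obstacle is therefore a single real-variable inequality: showing this last quantity has the correct sign (nonpositive, to match $\partial_\alpha\operatorname{Re}\mathcal{P}_0\ge0$ after accounting for the overall sign in $\mathcal{P}_0=1-\frac{2}{\pi^2}L^2$... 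I should double-check the sign bookkeeping against the endpoint values $\mathcal{P}_0(r)\le\mathcal{P}_0(-r)$) for all $\beta\in(0,\pi/2)$, $s\in(0,1)$. I would handle it by clearing the positive denominator $|1-s^2e^{2i\beta}|^2$, reducing to a polynomial/trigonometric inequality in $s$ and $\beta$ that can be checked by elementary estimates (bounding $\log\frac{1+u}{1-u}$ via its real and imaginary parts and using $s<1$), or alternatively by verifying the series $\sum n c_n r^n\sin(n\alpha)\ge0$ via a positivity result for the kernel $\sum_n n c_n r^n e^{in\alpha}$ — noting $nc_n=\sum_{k=0}^{n-1}\frac1{2k+1}$ is increasing in $n$, so an Abel summation against the Fejér-type kernel $\sum r^n\sin(n\alpha)=\frac{r\sin\alpha}{1-2r\cos\alpha+r^2}\ge0$ on $[0,\pi]$ may deliver the sign directly. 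Once monotonicity in $\alpha$ on $[0,\pi]$ is established, the lemma is immediate, and sharpness is witnessed by $z=r$ and $z=-r$.
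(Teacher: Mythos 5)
Your reduction of the lemma to the monotonicity of $\alpha\mapsto\operatorname{Re}\mathcal{P}_{0}(re^{i\alpha})$ on $[0,\pi]$ (with the endpoints giving $\mathcal{P}_{0}(r)$ and $\mathcal{P}_{0}(-r)$, and evenness handling $[-\pi,0]$) is a sensible strategy, and it is close in spirit to the paper, which fixes $r$, writes $\operatorname{Re}\mathcal{P}_{0}(re^{i\alpha})=\mathcal{G}(r,c)$ with $c=\cos(\alpha/2)$, and shows the only interior critical point $c=0$ is a maximum so that the minimum sits at the endpoint $c=1$, i.e.\ at $z=r$. The problem is that your write-up never actually establishes the one fact that constitutes the lemma: the sign of the $\alpha$-derivative. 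The closed-form route ends at ``compute $g'=2YY'-2XX'$ and show $YY'\geq XX'$ pointwise \ldots by elementary estimates,'' with the overall sign bookkeeping explicitly flagged as unchecked; nothing is verified, so this is an outline, not a proof. Moreover, the fallback you offer is unsound as stated: transferring the positivity of the kernel $\sum_{n\geq1}r^{n}\sin(n\alpha)=\frac{r\sin\alpha}{1-2r\cos\alpha+r^{2}}\geq0$ to $\sum_{n\geq1}b_{n}r^{n}\sin(n\alpha)$ by Abel summation requires $b_{n}$ positive and \emph{decreasing} (equivalently, nonnegative tails), whereas your $b_{n}=nc_{n}=\sum_{k=0}^{n-1}(2k+1)^{-1}$ is increasing; the relevant regrouping is $\sum_{n}b_{n}r^{n}\sin(n\alpha)=\sum_{k\geq0}\frac{1}{2k+1}\operatorname{Im}\sum_{n>k}(re^{i\alpha})^{n}$, and these tails can be negative, e.g.\ at $\alpha=\pi/2$, $\operatorname{Im}\sum_{n\geq3}(ir)^{n}=-r^{3}/(1+r^{2})<0$. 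So the series route, as described, does not deliver the sign.

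The good news is that your own closed-form computation closes the gap with less effort than you feared, because on the relevant range there is no sign ambiguity at all: for $\beta=\alpha/2\in(0,\pi/2)$ the point $u=\sqrt{r}\,e^{i\beta}$ lies in the open first quadrant of $\mathbb{D}$, whence $X=\log\bigl|\tfrac{1+u}{1-u}\bigr|>0$ (since $|1+u|^{2}-|1-u|^{2}=4\operatorname{Re}u>0$; your worry that ``$X$ can change sign'' is moot here) and $Y=\arg\tfrac{1+u}{1-u}>0$, while $\frac{2u}{1-u^{2}}=\frac{2u(1-\bar{u}^{2})}{|1-u^{2}|^{2}}$ has $\operatorname{Re}\frac{2u}{1-u^{2}}=\frac{2\operatorname{Re}u\,(1-|u|^{2})}{|1-u^{2}|^{2}}\geq0$ and $\operatorname{Im}\frac{2u}{1-u^{2}}=\frac{2\operatorname{Im}u\,(1+|u|^{2})}{|1-u^{2}|^{2}}\geq0$. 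With $L'=i\,\frac{2u}{1-u^{2}}$ this gives $\operatorname{Re}(LL')=-\bigl(X\operatorname{Im}\tfrac{2u}{1-u^{2}}+Y\operatorname{Re}\tfrac{2u}{1-u^{2}}\bigr)\leq0$, hence $\partial_{\beta}\operatorname{Re}\mathcal{P}_{0}(re^{i\alpha})=-\tfrac{4}{\pi^{2}}\operatorname{Re}(LL')\geq0$, which is exactly the monotonicity you need, and the lemma follows with sharpness at $z=\pm r$. Until such a verification (or the paper's critical-point analysis of $\mathcal{G}(r,c)$ in $c$) is actually carried out, the proposal cannot be counted as a proof of the stated bounds.
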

\vskip -1cm
\begin{proof}
Suppose $z=re^{i\alpha},$ where $-\pi<\alpha\leq \pi,$ then for $|z|=r<1,$
\begin{align*}
\operatorname{Re}(\mathcal{P}_{0}(z))&=-\frac{2}{\pi^{2}}\left\{\operatorname{Re}\left(\log\left(\frac{1+\sqrt{z}}{1-\sqrt{z}}\right)\right)^{2}\right\}\\&=-\frac{2}{\pi^{2}}\left(\log\left(\sqrt{\frac{\mu_{1}(r,c)}{\mu_{2}(r,c)}}\right)\right)^{2}+\frac{2}{\pi^{2}}\left(\tan ^{-1}\left(\frac{2\sqrt{1-c^{2}} \sqrt{r}}{1-r}\right)\right)^2\\&=:\mathcal{G}(r,c).
\end{align*} 
where $c:=\cos (\alpha/2)$ and 
\begin{align*}
\mu_{i}(r,c):=
\left\{    
    \begin{array}{ll}
     1+r+2 c\sqrt{r}, & i=1, \\
     1+r-2 c\sqrt{r},  &  i=2.
    \end{array}
\right.
\end{align*}
Observe that $c\in[-1,1],$ infact it is easy to check that $\partial \mathcal{G}(r,c)/\partial c = 0$ if and only if $c=0,$ also $\partial^{2} \mathcal{G}(r,0)/\partial c^{2} <0,$ which leads to
\begin{align}\label{EQN418}
    \displaystyle{\max _{c\in[-1,1]}\mathcal{G}(r,c)=\mathcal{G}(r,0)}=\mathcal{P}_{0}(-r)=1+\frac{2}{\pi^{2}}\left(\tan^{-1}\left(\frac{2\sqrt{r}}{1-r}\right)\right)^{2}.
\end{align} Moreover, for each $R\leq r<1,$ equation \eqref{EQN418} leads to $\mathcal{G}(r,0)\geq \mathcal{P}_{0}(r)=\mathcal{G}(r,1).$
Since $\mathcal{G}(r,0)$ is an increasing function, whereas $\mathcal{G}(r,1)$ is a decreasing function of $r,$ this leads to the inequality $\mathcal{G}(r,1) < \mathcal{G}(r,0),$ for each $r<1$. Hence the required bound is achieved. 
\end{proof}

As a consequence of Lemma \ref{L41} and \cite[Theorem 2.1 \& Corollary 2.2]{Gangania n Kumar(2021)Trans}, 
we obtain the Growth and Covering Theorems for the class $\mathcal{F}_{\mathcal{LP}}.$
\begin{theorem}\label{EQN416}
Let $f\in\mathcal{F}_{\mathcal{LP}},$ then the following holds
\begin{enumerate}[I.]
 \item \it{(Growth Theorem) For $|z|=r<1,$ let  \small{\[\max_{|z|=r}\operatorname{Re}\mathcal{P}_{0}(z)=\mathcal{P}_{0}(-r)\text{ and }  \min_{|z|=r}\operatorname{Re}{\mathcal{P}_{0}}(z)=\mathcal{P}_{0}(r),\]} then for $|z|=r<1$ the following sharp inequality holds
\begin{align*} 
     r \exp\left(\int_{0}^{r}\frac{\mathcal{P}_{0}(t)}{t} dt \right)\leq |f(z)|\leq r \exp\left(\int_{0}^{r}\frac{\mathcal{P}_{0}(-t)}{t} dt \right).
\end{align*}
\item (Covering Theorem) 
Suppose $\min_{|z|=r}\operatorname{Re}{\mathcal{P}_{0}}(z)=\mathcal{P}_{0}(r)$ and $f\in\mathcal{F}_{\mathcal{LP}}.$ Let $f_{0}$ be given by \eqref{EQN42}, then $f(z)$ is a rotation of $f_{0}$ or $\left\{w\in\mathbb{D}:|w|\leq -f_{0}(-1)\right\}$ 
$\subset f(\mathbb{D}),$
where $-f_{0}(-1)=\lim_{r\to 1} -f_{0}(-1).$}
\end{enumerate}
\end{theorem}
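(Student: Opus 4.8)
The plan is to read Theorem \ref{EQN416} off the general growth and covering theorems for the Kumar--Gangania class $\mathcal{F}(\psi)$ \cite[Theorem 2.1 \& Corollary 2.2]{Gangania n Kumar(2021)Trans}, applied with $\psi=\mathcal{P}_{0}$ so that $\mathcal{F}(\psi)=\mathcal{F}_{\mathcal{LP}}$. First I would check that $\mathcal{P}_{0}$ satisfies the standing hypotheses of that framework: it is analytic and univalent on $\mathbb{D}$ (noted in the Introduction), $\mathcal{P}_{0}(0)=0$ (from its power series), and $\mathcal{P}_{0}(\mathbb{D})$ is starlike with respect to the origin. The last point is read off the image description: $\mathcal{P}_{0}(\mathbb{D})=\Omega_{\mathcal{LP}}-1=\{\omega:(\operatorname{Im}\omega)^{2}<1-2\operatorname{Re}\omega\}$ is the inside of a leftward parabola with vertex $(1/2,0)$, and for any such $\omega_{0}$ and $t\in[0,1]$ one has $t^{2}(\operatorname{Im}\omega_{0})^{2}<t^{2}(1-2\operatorname{Re}\omega_{0})\le 1-2t\operatorname{Re}\omega_{0}$, i.e.\ $t\omega_{0}$ again lies in the region. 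Granting this, \cite[Theorem 2.1]{Gangania n Kumar(2021)Trans} supplies, for $f\in\mathcal{F}_{\mathcal{LP}}$ with subordinating Schwarz function $w$, the identity $\log(f(z)/z)=\int_{0}^{1}\mathcal{P}_{0}(w(tz))\,t^{-1}\,dt$ together with the sharp two-sided estimate obtained by replacing $\operatorname{Re}\mathcal{P}_{0}(w(tz))$ by its extreme values over $\{|\zeta|\le t|z|\}$.

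Next I would insert Lemma \ref{L41} into this machinery. That lemma gives $\mathcal{P}_{0}(r)\le\operatorname{Re}\mathcal{P}_{0}(re^{i\alpha})\le\mathcal{P}_{0}(-r)$ on the circle $|\zeta|=r$; its proof moreover records that $\mathcal{P}_{0}(-r)=\mathcal{G}(r,0)$ is increasing while $\mathcal{P}_{0}(r)=\mathcal{G}(r,1)$ is decreasing in $r$. Combining the two facts promotes the circle estimate to a disc estimate, $\min_{|\zeta|\le r}\operatorname{Re}\mathcal{P}_{0}(\zeta)=\mathcal{P}_{0}(r)$ and $\max_{|\zeta|\le r}\operatorname{Re}\mathcal{P}_{0}(\zeta)=\mathcal{P}_{0}(-r)$. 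Feeding this in and using $|w(tz)|\le t|z|=tr$ under the integral sign yields exactly the stated inequality after rescaling the variable of integration. For sharpness one takes $f_{0}$ from \eqref{EQN42}: here $w(z)=z$ and $zf_{0}'(z)/f_{0}(z)-1=\mathcal{P}_{0}(z)$, and evaluating at the real points $z=r$ and $z=-r$ turns the lower and upper inequalities, respectively, into equalities (precisely where Lemma \ref{L41} is attained); the rotations $e^{-i\alpha}f_{0}(e^{i\alpha}z)$ are handled identically, so the estimates are best possible.

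For part II I would apply the lower bound of part I and let $r\to1$: for every $f\in\mathcal{F}_{\mathcal{LP}}$ one gets $\liminf_{|z|\to1}|f(z)|\ge\lim_{r\to1}r\exp\bigl(\int_{0}^{r}\mathcal{P}_{0}(t)\,t^{-1}\,dt\bigr)$, which is precisely the limiting radial value of the extremal $f_{0}$; the usual omitted-value argument (if $w_{0}\notin f(\mathbb{D})$ then $|w_{0}|$ is at least this quantity, and the sharp case forces $f$ to be a rotation of $f_{0}$) then delivers \cite[Corollary 2.2]{Gangania n Kumar(2021)Trans} in the present setting, i.e.\ the dichotomy in the statement. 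The one genuinely analytic point here is the finiteness of $\int_{0}^{1}\mathcal{P}_{0}(t)\,t^{-1}\,dt$, which guarantees that the covering disc is non-degenerate: near $t=0$ the integrand tends to the finite limit $-8/\pi^{2}$, while near $t=1$ it is comparable to $(\log(1-t))^{2}$, which is integrable; hence the covering constant is finite and positive.

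I expect essentially all of the work to be the two bridging steps, namely verifying that $\mathcal{P}_{0}(\mathbb{D})$ is starlike with respect to $0$ (so that the cited framework applies at all) and upgrading the circle bounds of Lemma \ref{L41} to disc bounds via the monotonicity recorded in its proof, together with the routine endpoint estimate for the covering integral; everything else is the Schwarz-lemma estimate under the integral sign and the change of variables already performed in \cite{Gangania n Kumar(2021)Trans}.
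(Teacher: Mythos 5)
Your route is the same as the paper's: the paper offers no separate argument for Theorem \ref{EQN416}, deducing it directly from Lemma \ref{L41} together with \cite[Theorem 2.1 \& Corollary 2.2]{Gangania n Kumar(2021)Trans}, and your part I — checking that $\mathcal{P}_{0}$ is univalent with $\mathcal{P}_{0}(0)=0$ and that $\mathcal{P}_{0}(\mathbb{D})=\{\omega:(\operatorname{Im}\omega)^{2}<1-2\operatorname{Re}\omega\}$ is starlike with respect to the origin, upgrading the circle bounds of Lemma \ref{L41} to disc bounds via the monotonicity recorded in its proof, the Schwarz--lemma estimate under the integral, and sharpness via $f_{0}$ of \eqref{EQN42} — is a correct, fleshed-out version of exactly that reduction. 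The verification $(1-t)\bigl(1+t-2t\operatorname{Re}\omega_{0}\bigr)\geq 0$ behind your starlikeness claim and the endpoint estimates for $\int_{0}^{1}\mathcal{P}_{0}(t)t^{-1}dt$ are fine.

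There is, however, a genuine gap in part II: your omitted-value argument yields the covering constant $\lim_{r\to 1}r\exp\bigl(\int_{0}^{r}\mathcal{P}_{0}(t)t^{-1}dt\bigr)=f_{0}(1)$, which is strictly less than $1$ since $\mathcal{P}_{0}(t)<0$ on $(0,1)$, whereas the constant printed in the statement is $-f_{0}(-1)=\exp\bigl(\int_{0}^{1}\mathcal{P}_{0}(-t)t^{-1}dt\bigr)>1$, because $\mathcal{P}_{0}(-t)=\tfrac{8}{\pi^{2}}\bigl(\tan^{-1}\sqrt{t}\,\bigr)^{2}>0$; the latter is the limit of the \emph{upper} growth bound, not the lower one. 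In the Kumar--Gangania template the minimum of $\operatorname{Re}\psi$ on $|z|=r$ sits at $-r$, which is why $-f_{0}(-1)$ is the right constant there; here Lemma \ref{L41} puts the minimum at $+r$, so the constant does not transfer verbatim, and your closing identification of the derived bound with ``the dichotomy in the statement'' silently equates $f_{0}(1)$ with $-f_{0}(-1)$, which is false. Indeed, by your own part I upper bound every $f\in\mathcal{F}_{\mathcal{LP}}$ satisfies $|f(z)|<-f_{0}(-1)$ throughout $\mathbb{D}$, so no member (rotation of $f_{0}$ or not — e.g. $f(z)=z$, or $f$ generated by the Schwarz function $w(z)=z^{2}$) can cover the disc $\{|w|\leq -f_{0}(-1)\}$; the covering statement is only correct with the constant $\lim_{r\to1}f_{0}(r)$, which is what your proof actually establishes. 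You should either state part II with that constant or explicitly flag that the printed constant needs correcting, rather than asserting the statement as written has been proved.
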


\begin{remark}
(See \textbf{Fig.} \ref{fig:3})  If $f(z)$ is of the form $f(z)=z+a_{2}z^{2}+a_{3}z^{3}+\ldots,$ belongs to $\mathcal{F}_{\mathcal{LP}},$ then 
\[\left|\arg\left(\frac{zf'(z)}{f(z)}-2\right)\right|>\frac{3\pi}{4}.\]
It can be verified that the equation of tangent corresponding to $\Gamma:y^{2}=1+2(1-x)$ is given by $y=\pm (x-2).$ Infact these tangents intersect the parabola $\Gamma$ at the points $(1,\pm 1).$ Therefore it can be observed that the convex region $\Omega_{\mathcal{LP}}$ lies in the sector $|\arg(\omega-2)|>3\pi/4.$ Hence this gives a sharp argument estimate for functions lying in the class $\mathcal{F}_{\mathcal{LP}}.$ 
\end{remark}

\begin{remark}\label{EQN404}
Due to Lemma \ref{L41}, for $|z|=r<1,$  we have $\mathcal{LP}(r)\leq \operatorname{Re}\mathcal{LP}(z)\leq \mathcal{LP}(-r)$ and infact $\max_{|z|=r}|\mathcal{LP}(z)|=|\mathcal{LP}(r)|=|\mathcal{P}_{0}(r)|.$ \end{remark} 

\subsection{Radius Problems for the class $\mathcal{F}_{\mathcal{LP}}$}
Based on the definition of the class $\mathcal{F}_{\mathcal{LP}}$ and pictorial representation of $\mathcal{LP}(\partial \mathbb{D})$ (see \textbf{Fig.} \ref{fig:1}), we have $\max_{|z|\leq 1} \operatorname{Re}\left(\mathcal{LP}(z)\right)=\mathcal{LP}(-1)=3/2.$ This means $\operatorname{Re}zf'(z)/f(z)<3/2,$ thus $f\in \mathcal{F}_{\mathcal{LP}}$ may or may not be a univalent function. Therefore it is an interesting problem to establish the largest radius $r_{0}<1$ such that each $f\in\mathcal{F}_{\mathcal{LP}}$ is starlike in $|z|\leq r_{0}.$ 
In this section, we study some radius results for the class $\mathcal{F}_{\mathcal{LP}}$ along with the classes $\mathcal{S}^{*}(\phi)$ and $\mathcal{F}(\psi)$ for some special choices of $\phi(z)$ and $\psi(z),$ as mentioned in Table \ref{EQNTable2} (see Appendix)  
and equation \eqref{EQN420}, respectively. Here below we provide a lemma that yields a maximal disc that can be subscribed within the parabolic region $\Omega_{\mathcal{LP}}.$

\begin{lemma}\label{EQN406}
Suppose $a< 3/2$ and  assume that $\zeta(\eta)$ is defined as follows: \[\zeta=\zeta(\eta)=\log \left(\dfrac{\sqrt{\eta}}{\sqrt{1-\eta}}\right) \text{ with } \eta=\frac{e^{-\pi\sqrt{1-2 a}}}{1+e^{-\pi\sqrt{1-2 a}}},\] then $\mathcal{LP}(\mathbb{D})$ satisfies the following inclusion 
\[\mathcal{D}(a,r_{a}):=\left\{\omega\in \mathbb{C}:|\omega-a|<r_{a}\right\}\subset \Omega_{\mathcal{LP}},\]
where 
\begin{align*}
r_{a}=\left\{\begin{array}{cl}
  \sqrt{\left(a-\dfrac{3}{2}+\dfrac{2 \zeta^2}{\pi ^2}\right)^{2}+\dfrac{4 \zeta^2}{\pi ^2}}, & a \leq \dfrac{1}{2}\\
  \dfrac{3}{2}-a, & \dfrac{1}{2}<a<\dfrac{3}{2},
\end{array}\right.
\end{align*}
\end{lemma}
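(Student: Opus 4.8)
The plan is to reduce everything to an elementary Euclidean distance computation. First I would observe that the set $\Omega_{\mathcal{LP}}$ is exactly the open interior of the parabola $\Gamma=\{x+iy:y^{2}=3-2x\}$: the two conditions in its definition describe the same set, the second being the focus--directrix form with focus $1$ and directrix $\{\RE\omega=2\}$. Hence $\partial\Omega_{\mathcal{LP}}=\Gamma$, and since $a<3/2$ is real we have $0=(\IM a)^{2}<3-2a$, so $a\in\Omega_{\mathcal{LP}}$. For any open set, the largest ball centred at an interior point that it contains has radius equal to the distance from that point to the complement; so the largest disc $\mathcal D(a,r)$ with $\mathcal D(a,r)\subset\Omega_{\mathcal{LP}}$ has radius precisely $\operatorname{dist}(a,\partial\Omega_{\mathcal{LP}})=\operatorname{dist}(a,\Gamma)>0$ (convexity of $\Omega_{\mathcal{LP}}$ is only a convenient check, not needed). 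So it remains to compute this distance and to verify that the asserted $r_{a}$ equals it.

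I would compute $\operatorname{dist}(a,\Gamma)$ using the parametrisation of $\Gamma=\mathcal{LP}(\partial\mathbb D)$ induced by $\mathcal{LP}$ itself, which is what makes the quantity $\zeta$ arise. For $|z|=1$, writing $\sqrt z=e^{i\psi}$ with $\IM\sqrt z\ge0$, one gets $\tfrac{1+\sqrt z}{1-\sqrt z}=i\cot(\psi/2)$, hence $\log\tfrac{1+\sqrt z}{1-\sqrt z}=\xi+i\pi/2$ with $\xi:=\log|\cot(\psi/2)|$ running over all of $\mathbb R$; substituting into $\mathcal{LP}$ gives
\[
\mathcal{LP}(z)=\Bigl(\tfrac32-\tfrac{2\xi^{2}}{\pi^{2}}\Bigr)-i\,\tfrac{2\xi}{\pi}=:\gamma(\xi),\qquad \xi\in\mathbb R,
\]
which is $\Gamma$ written as $y^{2}=3-2x$. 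Then $|\gamma(\xi)-a|^{2}=\bigl(a-\tfrac32+\tfrac{2\xi^{2}}{\pi^{2}}\bigr)^{2}+\tfrac{4\xi^{2}}{\pi^{2}}$, and the substitution $v=\xi^{2}\ge0$ turns the right-hand side into an upward-opening quadratic in $v$ whose unconstrained vertex sits at $v^{*}=\tfrac{\pi^{2}(1-2a)}{4}$.

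The case split in the statement is exactly the alternative $v^{*}\ge0$ versus $v^{*}<0$. If $1/2<a<3/2$, then $v^{*}<0$, so the minimum over the admissible range $v\ge0$ is attained at $v=0$, i.e.\ at the vertex $\gamma(0)=3/2$ of $\Gamma$, giving $\operatorname{dist}(a,\Gamma)=3/2-a$. If $a\le1/2$, then $v^{*}$ is admissible and the minimiser is $\xi^{2}=\tfrac{\pi^{2}(1-2a)}{4}$ (a symmetric pair of points off the real axis, namely $1+a\pm i\sqrt{1-2a}$), and substituting back gives $\operatorname{dist}(a,\Gamma)^{2}=2-2a$. To bring this to the stated closed form, one checks straight from the definitions that $\eta/(1-\eta)=e^{-\pi\sqrt{1-2a}}$, whence $\zeta=\tfrac12\log\!\bigl(\eta/(1-\eta)\bigr)=-\tfrac{\pi}{2}\sqrt{1-2a}$, so that $\zeta^{2}=\tfrac{\pi^{2}(1-2a)}{4}$ is exactly $\xi^{2}$ at the minimiser; plugging $\xi^{2}=\zeta^{2}$ into $|\gamma(\xi)-a|^{2}$ returns precisely $r_{a}^{2}=\bigl(a-\tfrac32+\tfrac{2\zeta^{2}}{\pi^{2}}\bigr)^{2}+\tfrac{4\zeta^{2}}{\pi^{2}}$. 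Combining the two cases, $r_{a}=\operatorname{dist}(a,\partial\Omega_{\mathcal{LP}})$, which yields both $\mathcal D(a,r_{a})\subset\Omega_{\mathcal{LP}}$ and its maximality.

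I do not anticipate a genuine obstacle; the only points that need care are the boundary behaviour of the branch of $\sqrt z$ in deriving the parametrisation $\gamma(\xi)$, the fact that the minimisation is constrained to $\xi^{2}\ge0$ so that the vertex case is forced by this constraint rather than by a spurious critical point, and the routine algebra identifying $\sqrt{2-2a}$ with the $\zeta$-expression.
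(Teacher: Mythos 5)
Your proposal is correct and follows essentially the paper's own route: both arguments reduce the lemma to minimizing the squared distance from the real point $a$ to the boundary parabola $y^{2}=3-2x$ and read off the case split at $a=1/2$. Your substitution $v=\xi^{2}$, which turns the minimization into a constrained quadratic with vertex $v^{*}=\pi^{2}(1-2a)/4$, is just a cleaner change of variables for the same critical-point computation the paper performs in the variable $X=\cos t$, and your identification $\zeta^{2}=\pi^{2}(1-2a)/4$ correctly recovers the stated $r_{a}$.
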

\begin{proof}
We obtain a maximal disc centered at $(a,0),$ where $a< 3/2,$ that can be inscribed within $\Omega_{\mathcal{LP}}.$ The distance from center $(a,0)$ to the boundary $f(\partial (\mathbb{D}))$ is given by square root of 
\[\mathcal{D}_{a}(X):=\left(a+\frac{2}{\pi^{2}} \left(\log \left(\frac{\sqrt{X^2}}{\sqrt{1-X^2}}\right)\right)^{2}-\frac{3}{2}\right)^2+\frac{4}{\pi^{2}} \left(\log \left(\frac{\sqrt{X^2}}{\sqrt{1-X^2}}\right)\right)^{2},\] where $X=\cos t .$ Now the critical points of $\mathcal{D}_{a}(X)$ are  
\begin{align*}
X':=
  \begin{cases} 
    \pm \dfrac{e^{\frac{1}{2} \pi  \sqrt{1-2 a}}}{\sqrt{1+e^{\pi  \sqrt{1-2 a}}}}, \pm\dfrac{e^{-\frac{1}{2} \pi  \sqrt{1-2 a}}}{\sqrt{1+e^{-\pi \sqrt{1-2 a}}}}, &\text{if } a<1/2,  \\ & \\
      \pm 1/\sqrt{2}, &\text{if }   1/2\leq a<3/2.
 \end{cases}
\end{align*} 
It can be verified that $\mathcal{D}_{a}''(X)>0$ at $X=X',$ whenever  $a<3/2.$ Therefore, $X=X'$ is the point of minima for $\mathcal{D}_{a}(X),$ which leads us to the optimal disk centered at $a$ with radius $r_{a}.$
\end{proof}

\begin{theorem}\label{EQN46}
Suppose $0\leq \alpha <1$ and $-1<B<A\leq1,$ then for $f\in\mathcal{A},$ the sharp $\mathcal{F}_{\mathcal{LP}}-$radii for the classes $\mathcal{S}^{*}_{p},$ $\mathcal{S}^{*}_{s},$ $\Delta^{*},$ $\mathcal{S}^{*}_{\varrho},$  $\mathcal{S}^{*}_{\rho},$ $\mathcal{S}^{*}_{\wp},$ $\mathcal{BS}^{*}(\alpha),$ $\mathcal{S}^{*}_{\alpha,e}$  and $\mathcal{S}^{*}(A,B)$ (see Table \ref{EQNTable2} in Appendix) are respectively given by  
\begin{enumerate}[(i)]
\item $\mathcal{R}_{\mathcal{F}_{\mathcal{LP}}}(\mathcal{S}^{*}_{p})=\tanh ^2(\pi /4).$
\item $\mathcal{R}_{\mathcal{F}_{\mathcal{LP}}}(\mathcal{S}^{*}_{s})=\pi/6.$ 
    \item $\mathcal{R}_{\mathcal{F}_{\mathcal{LP}}}(\Delta^{*})=5/12.$
\item $\mathcal{R}_{\mathcal{F}_{\mathcal{LP}}}(\mathcal{S}^{*}_{\varrho})=(\cosh ^{-1}(3/2))^2.$
\item $\mathcal{R}_{\mathcal{F}_{\mathcal{LP}}}(\mathcal{S}^{*}_{\rho})=\sinh (1/2).$
    \item $\mathcal{R}_{\mathcal{F}_{\mathcal{LP}}}(\mathcal{S}^{*}_{\wp})\approx 0.3517\ldots.$ 
    \item For $0<\alpha<1,$ $\mathcal{R}_{\mathcal{F}_{\mathcal{LP}}}(\mathcal{BS}^{*}(\alpha))=R_{\mathcal{BS}},$ where 
\begin{align*}
R_{\mathcal{BS}}=\left\{\begin{array}{cl}
  1/2, & \alpha = 0\\
(\sqrt{1+\alpha}-1)/\alpha, & 0 < \alpha < 1 .
\end{array}\right.
\end{align*}
    \item $\mathcal{R}_{\mathcal{F}_{\mathcal{LP}}}(\mathcal{S}^{*}_{\alpha,e})={R}_{\alpha,e},$ where 
 \begin{align*}
R_{\alpha,e}=\left\{\begin{array}{cl}
  \log(1-1/2(\alpha-1)), & 0\leq \alpha < 1-1/2(e-1)\\
 1, & 1-1/2(e-1)\leq \alpha < 1.
\end{array}\right.
\end{align*}
In particular,
    $\mathcal{R}_{\mathcal{F}_{\mathcal{LP}}}(\mathcal{S}^{*}_{e})=\log(3/2).$
    \item $\mathcal{R}_{\mathcal{F}_{\mathcal{LP}}}(\mathcal{S}^{*}(A,B))=:\tilde{R},$ where 
    \begin{align*}
    \tilde{R}=
  \begin{cases} 
    1/(2A-3B),& \text{when } ((- 1 < B \leq (2 A - 1)/3) \wedge (-1<A<0) ) \\& \qquad \lor ((- 1 < B < (2 A - 1)/3) \wedge (0\leq A \leq1)),  \\  
   1, &  \text{when } ((2 A - 1)/3<B<A\leq 1) \wedge (-1<A<0 ))\\& \quad \lor( ( (2 A - 1)/3\leq B < A \leq 1) \wedge (0\leq A \leq1)).
\end{cases}
 \end{align*}
\end{enumerate} 
\end{theorem}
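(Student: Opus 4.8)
The plan is to reduce each radius assertion to an application of Lemma~\ref{EQN406}, which pins down the largest disc $\mathcal{D}(a,r_a)$ centered on the real axis that is inscribed in $\Omega_{\mathcal{LP}}$. For each Ma--Minda (or $\mathcal{F}(\psi)$) class listed in Table~\ref{EQNTable2}, the membership $f\in\mathcal{S}^*(\phi)$ forces $zf'(z)/f(z)$ to lie in the image $\phi(\mathbb{D}_r)$ for $|z|<r$; since every relevant $\phi(\mathbb{D}_r)$ is a region symmetric about the real axis and (by Ma--Minda theory, or by the known descriptions of these particular $\phi$) is itself contained in a disc of the form $\{|\,w-a(r)\,|<\rho(r)\}$ centered at $a(r)=\phi(r)$-type quantity with radius $\rho(r)$ governed by $\phi(-r)$ and $\phi(r)$, the containment $\phi(\mathbb{D}_r)\subset\Omega_{\mathcal{LP}}$ is equivalent to $\mathcal{D}(a(r),\rho(r))\subset\Omega_{\mathcal{LP}}$. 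I would therefore, class by class, quote the explicit disc $|w-a(r)|\le\rho(r)$ that tightly encloses $\phi(\mathbb{D}_r)$ (these are standard: e.g. for $\mathcal{S}^*_e$ the disc is centered at $\cosh r$-type value, for $\mathcal{S}^*_s$ at a sine-based value, for $\mathcal{BS}^*(\alpha)$ at $1$ with radius $r/(1-\alpha r^2)$, for $\mathcal{S}^*(A,B)$ the classical Janowski disc centered at $(1-ABr^2)/(1-B^2r^2)$ with radius $(A-B)r/(1-B^2r^2)$), and then invoke Lemma~\ref{EQN406}.

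The core computation for each item is then the single scalar inequality: with $a=a(r)$ the center and $\rho(r)$ the radius of the enclosing disc, solve $\rho(r)\le r_{a(r)}$ for the largest admissible $r$, where $r_{a}$ is the piecewise expression from Lemma~\ref{EQN406}. In the regime $1/2<a<3/2$ this is simply $\rho(r)\le 3/2-a(r)$, which already accounts for the ``clean'' answers such as $\mathcal{R}_{\mathcal{F}_{\mathcal{LP}}}(\mathcal{S}^*_\varrho)=(\cosh^{-1}(3/2))^2$, $\mathcal{R}_{\mathcal{F}_{\mathcal{LP}}}(\mathcal{S}^*_e)=\log(3/2)$, $\mathcal{R}_{\mathcal{F}_{\mathcal{LP}}}(\mathcal{S}^*_s)=\pi/6$, $\mathcal{R}_{\mathcal{F}_{\mathcal{LP}}}(\Delta^*)=5/12$, $\mathcal{R}_{\mathcal{F}_{\mathcal{LP}}}(\mathcal{S}^*_p)=\tanh^2(\pi/4)$, as well as the first branches of the $\mathcal{BS}^*(\alpha)$, $\mathcal{S}^*_{\alpha,e}$, and $\mathcal{S}^*(A,B)$ formulas (after checking that the relevant center does land in $(1/2,3/2)$, which is precisely what the case-distinctions on $A,B$ and on $\alpha$ encode). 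When the target center would fall at or below $1/2$, one instead uses the more elaborate $\sqrt{\,\cdots+4\zeta^2/\pi^2}$ branch, or else concludes $r=1$ works outright when $\phi(\mathbb{D})$ is already inside $\Omega_{\mathcal{LP}}$ — this is the source of the ``$R=1$'' branches and of item (viii)'s top case $1-\tfrac{1}{2(e-1)}\le\alpha<1$. Sharpness in every case follows from the extremal function $f_0$ of~\eqref{EQN42} together with the extremal disc-boundary contact point supplied by Lemma~\ref{EQN406}: at $z=\pm r_0$ the quantity $zf_0'(z)/f_0(z)$ sits on $\partial\Omega_{\mathcal{LP}}$, or else one uses the rotation $\tilde f(z)=\varepsilon^{-1}f_0(\varepsilon z)$ of the class-extremal to place the boundary point where needed.

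The main obstacle I anticipate is not any single class but the bookkeeping for $\mathcal{S}^*(A,B)$ in item (ix): the Janowski center $(1-ABr^2)/(1-B^2r^2)$ and radius $(A-B)r/(1-B^2r^2)$ must be compared against \emph{both} branches of $r_a$, and the threshold $B=(2A-1)/3$ appearing in the statement is exactly where the center crosses $1/2$ (equivalently where the disc that just fits has its contact point migrate from the parabola's vertex region to its ``sides''); sorting out the sign of $A$ on top of that gives the four-way split. A secondary subtlety is item (vi), $\mathcal{S}^*_\wp$, where $\phi$ is tied to a lemniscate/Booth-type curve and the enclosing disc's parameters are not elementary, so the equation $\rho(r)=r_{a(r)}$ is transcendental and only the numerical root $0.3517\ldots$ is available — there I would just verify that the stated value solves the relevant inequality with equality and that the derivative condition $\mathcal{D}_a''>0$ of Lemma~\ref{EQN406} holds, rather than seeking a closed form. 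For $\mathcal{S}^*_{\alpha,e}$ I would additionally need the monotonicity of $a(r)=$ (the $e^{\,\cdot}$-based center) in $r$ to guarantee the first branch's $\log$-formula is the genuine supremum and that it matches $1$ continuously at $\alpha=1-\tfrac1{2(e-1)}$.
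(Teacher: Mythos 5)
Your reduction of every item to Lemma \ref{EQN406} via a circumscribed disc only works when the enclosing disc is tight at the rightmost point of $\phi(\mathbb{D}_r)$, and this fails for at least two of the listed classes, so the proposal has a genuine gap. The paper argues directly with $\max_{|z|=r}\operatorname{Re}\phi(z)=\phi(r)$ and the vertex condition $\phi(r)\le 3/2$, using a disc bound (namely \eqref{EQN422}) only for the Janowski case, where the disc's rightmost point $(1+Ar)/(1+Br)$ coincides with the maximal real part. By contrast, for $\mathcal{S}^{*}_{s}$ the smallest disc centred at $1$ containing $1+\sin(\mathbb{D}_r)$ has radius $\sinh r$ (attained at $z=\pm ir$), so your condition $\rho(r)\le 3/2-a(r)$ becomes $\sinh r\le 1/2$ and yields $r\le \sinh^{-1}(1/2)\approx 0.481$, strictly smaller than the asserted sharp value $\pi/6\approx 0.524$; similarly for $\mathcal{S}^{*}_{\rho}$ the enclosing radius is $\sin^{-1}r$ (not $\sinh^{-1}r$, since the coefficients of $\sinh^{-1}z$ alternate), and your method gives $\sin(1/2)\approx 0.479$ instead of $\sinh(1/2)\approx 0.521$. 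Hence your claimed equivalence of $\phi(\mathbb{D}_r)\subset\Omega_{\mathcal{LP}}$ with $\mathcal{D}(a(r),\rho(r))\subset\Omega_{\mathcal{LP}}$ is false in general: disc containment is sufficient but not necessary, and it is not tight precisely for those $\phi$ whose deviation from the centre attains its maximal modulus off the real axis. Your route does agree with the paper's computations for (i), (iii), (iv), (vi), (vii), (viii) and (ix), where $\max_{|z|=r}|\phi(z)-a|$ is attained at $z=r$, but as written it cannot establish (ii) and (v).

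A second problem is sharpness. The function $f_0$ of \eqref{EQN42} satisfies $zf_0'(z)/f_0(z)=\mathcal{LP}(z)$, whose values never leave $\Omega_{\mathcal{LP}}$, so neither $f_0$ nor any rotation of it can witness that the radii are best possible in this direction of the problem ($f_0$ is the extremal for radius problems \emph{for} the class $\mathcal{F}_{\mathcal{LP}}$, as in Theorem \ref{EQN409}, not for passing \emph{into} $\mathcal{F}_{\mathcal{LP}}$). Sharpness here must come from the Ma--Minda extremal of the source class, i.e.\ $\tilde f$ with $z\tilde f'(z)/\tilde f(z)=\phi(z)$ (for instance $1+\mathcal{P}_{0,\pi}(z)$ in (i), $(1+Az)/(1+Bz)$ in (ix)), whose value at real $z=r$ exceeds the vertex $3/2$, and hence exits $\Omega_{\mathcal{LP}}$, as soon as $r$ passes the stated radius; this is exactly the argument the paper uses.
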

\begin{proof}
For part $(i),$ as $f\in\mathcal{S}^{*}_{p},$ then due to the geometry of the function $1+\mathcal{P}_{0,\pi}(z)=1+2/\pi^{2}(\log((1+\sqrt{z})/(1-\sqrt{z})))^{2}$ it can be observed that   \[\displaystyle{\max_{|z|=r}\operatorname{Re}(1+\mathcal{P}_{0,\pi}(z))} = 1+\mathcal{P}_{0,\pi}(r).\]
\begin{figure}[ht]
\begin{framed}
 \begin{tabular}{c}
\includegraphics[height=2.5in,width=1.9in,angle=0]{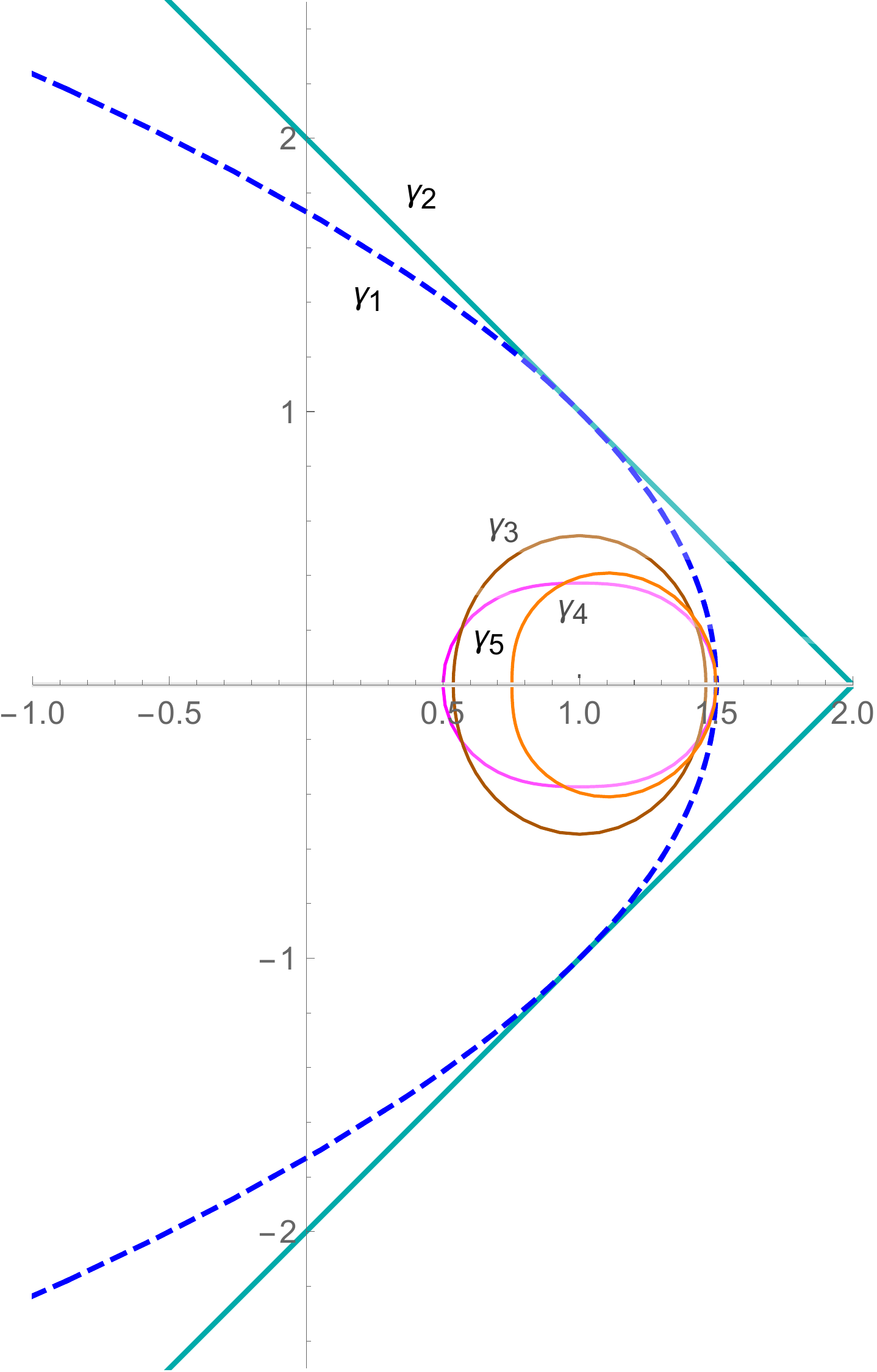}
\end{tabular}
\begin{tabular}{l}
\parbox{0.25\linewidth}{
$\gamma_{1}: \mathcal{LP}(z)$}\\ \vspace{0.15cm}  \hspace{0.45cm} $1-2/\pi^{2}(\log((1+\sqrt{z})/(1-\sqrt{z}))^{2}$ \\ \vspace{0.15cm}  
$\gamma_{2}:  |\arg (u-2)| = 3\pi/4$\\ \vspace{0.15cm} 
$\gamma_{3}:$ Modified Sigmoid function \\  \vspace{0.15cm}  \hspace{0.45cm} $2/(1+e^{-z})$  \\ \vspace{0.15cm}
$\gamma_{4}:$ Cardioid function \\ \vspace{0.15cm} \hspace{0.47cm}
$1+z e^z $ in $|z|<r^{*},$ \\ \vspace{0.15cm} \hspace{0.47cm}$r^{*}\approx 0.351734$ \\ \vspace{0.15cm} 
$\gamma_{5}: $ Booth Lemniscate for $\alpha=0.8$ \\ \vspace{0.15cm} \hspace{0.47cm} $1+5z/(5-4 z^2)$ in  $|z|<r_{0},$  \\ \vspace{0.15cm} \hspace{0.47cm} $r_{0}=|z|<(1/4) (3 \sqrt{5}-5).$
\end{tabular}
\caption{Pictorial boundary description of inclusion results pertaining to the parabolic function $\mathcal{LP}(z)$} 
\label{fig:3}
\end{framed}
\end{figure}
Now for $f(z)$ to lie in the class $\mathcal{F}_{\mathcal{LP}},$ we must have
\[1+\frac{2}{\pi^{2}}\left(\log\left(\frac{1+\sqrt{r}}{1-\sqrt{r}}\right)\right)^{2}\leq \frac{3}{2},\] which holds provided $r\leq\mathcal{R}_{\mathcal{F}_{\mathcal{LP}}}(\mathcal{S}^{*}_{p}).$ Clearly equality in $(i)$ is attained for the function $\tilde{f}(z)$ such that   
$z\tilde{f}'(z_{0})/\tilde{f}(z_{0})=1+\mathcal{P}_{0,\pi}(z_{0})$ at $z_{0}=\mathcal{R}_{\mathcal{F}_{\mathcal{LP}}}(\mathcal{S}^{*}_{p}).$ Further observe that, if $p\in\mathcal{P}[A,B]=\{p(z):p(z)=1+c_{1}z+c_{2}z^{2}\ldots \prec (1+Az)/(1+Bz),-1\leq B<A\leq 1\},$ then for $|z|=r<1,$ it is a known fact that   
\begin{equation}\label{EQN422}
\left|p(z)-\frac{1-AB r^{2}}{1-B^{2}r^{2}}\right|<\frac{|A-B|r}{1-B^{2}r^{2}}.
\end{equation} Then, in view of \eqref{EQN422}, for part $(ix)$ $f\in\mathcal{S}^{*}(A,B)$ lies in $\mathcal{F}_{\mathcal{LP}},$ if 
\[\frac{(A-B)r+1-A B r^{2}}{1-B^{2}r^{2}} \leq \frac{3}{2}.\]
Equivalently, we can say that 
$(1-B r) ((2 A -3 B) r-1)\leq 0,$ provided 
$r \leq \mathcal{R}_{\mathcal{F}_{\mathcal{LP}}}(\mathcal{S}^{*}(A,B)).$ Extremal function in this case is $\hat{f}\in\mathcal{A}$ satisfying
$z_{0}\hat{f}'(z_{0})/\hat{f}(z_{0})=(1+A z_{0})/(1+B z_{0}),$ where $z_{0}=\tilde{R}.$
For part $(iv),$ as $f\in\mathcal{F}_{\mathcal{LP}},$ then proceeding as before, $f$ lies in $\mathcal{S}^{*}_{\varrho}$ if for $|z|=r$ we have $3/2\geq\cosh{\sqrt{r}},$ provided $r=\mathcal{R}_{\mathcal{F}_{\mathcal{LP}}}(\mathcal{S}^{*}_{\varrho}).$ Sharpness holds for the function ${f}_{\varrho}\in\mathcal{A}$ defined as $z{f}'_{\varrho}(z)/{f}_{\varrho}(z)=\cosh \sqrt{z}.$ Further we know that $\max_{|z|=r} \RE (1+\sin z)=1+\sin r,$ then for part $(ii)$ it is enough to find an $r<1$ satisfying the equation $ \sin r=1/2,$ thus $r=\mathcal{R}_{\mathcal{F}_{\mathcal{LP}}}(\mathcal{S}^{*}_{s})=\pi/6.$  Sharpness holds for the function ${f}_{s}(z)$ given by $z{f}'_{s}(z)/{f}_{s}(z)=1+\sin z.$
In all the subsequent parts, the proofs follow along the same lines, therefore they are omitted.
\end{proof}

Let $\mathcal{P}_{\alpha}$ consist of functions of the form 
$p(z)=1+c_{1}z+c_{2}z^{2}+\ldots,$ satisfying $\RE p(z)>\alpha$ for $0\leq\alpha<1,$ then we say $p(z)$ is a  Carath\'{e}odory function of order $\alpha.$ Denote $\mathcal{P}(0)=:\mathcal{P},$ commonly known as the class of Carath\'{e}odory functions. Further, assume $\mathfrak{P}_{\mathcal{LP}}$ to be the class of functions of the form $p(z)=1+c_{1}z+c_{2}z^{2}+\ldots,$ such that $p(z)\prec \mathcal{LP}(z).$

\begin{theorem}\label{EQN43}
Let $0\leq \alpha <1$ and $0\leq \gamma \leq \gamma_{\alpha},$ where $\gamma_{\alpha}=\tanh ^2(\pi  \sqrt{1-\alpha }/2 \sqrt{2}).$ If $p\in\mathfrak{P}_{\mathcal{LP}},$ then $p\in\mathcal{P}_{\alpha},$ i.e $p(z)$ is a Carath\'{e}odory function of order $\alpha,$  in the disc $|z|<\gamma_{\alpha}.$ 
 \end{theorem}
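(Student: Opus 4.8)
The plan is to reduce the statement to the inclusion $\mathcal{LP}(\mathbb{D}_{\gamma_\alpha})\subset\{\operatorname{Re}\omega>\alpha\}$ together with sharpness, and then to identify $\gamma_\alpha$ explicitly. First I would observe that, by the subordination principle, $p\prec\mathcal{LP}$ implies $p(\mathbb{D}_r)\subset\mathcal{LP}(\mathbb{D}_r)$ for each $r<1$; hence it suffices to find the largest $r$ for which $\mathcal{LP}(\mathbb{D}_r)$ lies entirely in the half-plane $\operatorname{Re}\omega>\alpha$. By Remark \ref{EQN404} (which is a consequence of Lemma \ref{L41}), for $|z|=r$ we have the sharp lower bound $\operatorname{Re}\mathcal{LP}(z)\ge\mathcal{LP}(r)$, with equality at $z=r$. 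So the whole question collapses to the scalar inequality $\mathcal{LP}(r)\ge\alpha$, i.e.
\[
1-\frac{2}{\pi^{2}}\left(\log\frac{1+\sqrt{r}}{1-\sqrt{r}}\right)^{2}\ge\alpha,
\]
and one wants the supremal such $r$.

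The second step is to solve this inequality. Rearranging gives $\left(\log\frac{1+\sqrt{r}}{1-\sqrt{r}}\right)^{2}\le\frac{\pi^{2}(1-\alpha)}{2}$, so $\log\frac{1+\sqrt{r}}{1-\sqrt{r}}\le\frac{\pi\sqrt{1-\alpha}}{\sqrt{2}}$ (the logarithm is nonnegative for $r\in[0,1)$). Exponentiating and solving for $\sqrt{r}$: writing $L=\frac{\pi\sqrt{1-\alpha}}{\sqrt2}$, we get $\frac{1+\sqrt r}{1-\sqrt r}\le e^{L}$, hence $\sqrt r\le\frac{e^{L}-1}{e^{L}+1}=\tanh(L/2)$, so $r\le\tanh^{2}(L/2)=\tanh^{2}\!\left(\frac{\pi\sqrt{1-\alpha}}{2\sqrt2}\right)=\gamma_\alpha$. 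Since $\mathcal{LP}(r)$ is a decreasing function of $r$ on $[0,1)$ (its derivative is negative, as $\log\frac{1+\sqrt r}{1-\sqrt r}$ is positive and increasing), the inequality $\mathcal{LP}(r)\ge\alpha$ holds precisely for $r\le\gamma_\alpha$. This shows $p(\mathbb{D}_{\gamma_\alpha})\subset\{\operatorname{Re}\omega\ge\alpha\}$; for the open disc the inequality is strict, so $p\in\mathcal{P}_\alpha$ on $|z|<\gamma_\alpha$. The hypothesis $0\le\gamma\le\gamma_\alpha$ in the statement is just recording that $\gamma_\alpha\in(0,1)$, which follows because $\tanh^2$ takes values in $(0,1)$; one should check $\gamma_\alpha<1$, which is automatic, and note $\gamma_\alpha$ increases to $1$ as $\alpha\downarrow 0$ is false — rather $\gamma_0=\tanh^2(\pi/(2\sqrt2))<1$, consistent with part (i) of Theorem \ref{EQN46} where $\mathcal{R}_{\mathcal{F}_{\mathcal{LP}}}(\mathcal{S}^*_p)=\tanh^2(\pi/4)$ corresponds to $\alpha=1/2$.

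The third step is sharpness: the extremal function is the one realizing equality in Remark \ref{EQN404}, namely any $p$ with $p(z_0)=\mathcal{LP}(z_0)$ at $z_0=\gamma_\alpha$ (for instance $p=\mathcal{LP}$ itself, or the associated $f_0$ of \eqref{EQN42} through $zf_0'(z)/f_0(z)=\mathcal{LP}(z)$). At $z_0=\gamma_\alpha$ one has $\operatorname{Re}p(z_0)=\mathcal{LP}(\gamma_\alpha)=\alpha$, so the order $\alpha$ cannot be improved and the radius $\gamma_\alpha$ is best possible. I do not anticipate a genuine obstacle here: the only mild care needed is to confirm the monotonicity of $\mathcal{LP}(r)$ in $r$ (so that the threshold is an ``if and only if'') and that the relevant branch of $\sqrt{z}$ and of the logarithm keeps everything real and nonnegative on the segment $[0,\gamma_\alpha)$. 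The heart of the argument is simply invoking Lemma \ref{L41}/Remark \ref{EQN404} to replace a two-variable boundary minimization by the one-variable evaluation at $z=r$, after which the rest is elementary algebra with $\tanh$.
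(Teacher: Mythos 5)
Your proposal is correct and follows essentially the same route as the paper: invoke Lemma \ref{L41} (via subordination) to get $\operatorname{Re}p(z)\geq\mathcal{LP}(r)$ on $|z|=r$, reduce to the scalar inequality $\mathcal{LP}(r)\geq\alpha$, solve it to obtain $r\leq\tanh^{2}\bigl(\pi\sqrt{1-\alpha}/(2\sqrt{2})\bigr)$, and note sharpness through the extremal function $f_{0}$ of \eqref{EQN42}. Your additional checks (monotonicity of $\mathcal{LP}(r)$ and the explicit $\tanh$ algebra) only make explicit what the paper leaves implicit.
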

\vskip -2cm
 \begin{proof}
Since $p\in\mathfrak{P}_{\mathcal{LP}},$ then by definition of subordination and Schwarz Lemma, there exists an analytic function $w(z)$ with $|w(z)|\leq|z|<1$ and $w(0)=0,$ such that
$p(z)=\mathcal{LP}(w(z)).$ Suppose $w(z)=Re^{i\theta}$ $(-\pi<\theta\leq \pi),$ then $|w(z)|=R\leq |z|=r<1.$  On applying Lemma \ref{L41} for $p\in\mathfrak{P}_{\mathcal{LP}},$ we get
$\operatorname{Re} p(z)\geq {\mathcal{LP}}(r).$ Further  $p\in\mathfrak{P}_{\mathcal{LP}}$ is Carath\'{e}odory of order $\alpha$ $(0\leq \alpha<1)$ if ${\mathcal{LP}}(r)\geq \alpha,$ provided $r\leq \gamma_{\alpha}=\tanh^{2}(\pi\sqrt{1-\alpha}/2\sqrt{2}).$ The function $f_{0}(z)$ given by \eqref{EQN42}, is the extremal.
\end{proof}
\noindent Upon replacing $p(z)$ with $zf'(z)/f(z)$ in Theorem \ref{EQN43}, we deduce the next result.
 \begin{corollary}\label{EQN45}
 Let $0\leq \alpha <1$ and $0\leq \gamma \leq \gamma_{\alpha},$ where $\gamma_{\alpha}$ is as defined in Theorem \ref{EQN43}. If $f\in\mathcal{F}_{\mathcal{LP}},$ then $f(z)$ is starlike of order $\alpha$ in the disc $|z|<\gamma_{\alpha}.$ This result is sharp.
\end{corollary}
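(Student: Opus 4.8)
The plan is to obtain this corollary as the direct specialization of Theorem~\ref{EQN43} announced in the preceding remark. First I would set $p(z):=zf'(z)/f(z)$ for $f\in\mathcal{F}_{\mathcal{LP}}$; since $f\in\mathcal{A}$ we have $f(z)=z+a_{2}z^{2}+\cdots$ near the origin, so $p$ is analytic in $\mathbb{D}$ with $p(0)=1$, and the defining subordination of $\mathcal{F}_{\mathcal{LP}}$ says exactly $p\prec\mathcal{LP}$. Hence $p\in\mathfrak{P}_{\mathcal{LP}}$, and Theorem~\ref{EQN43} applies verbatim: $p$ is Carath\'{e}odory of order $\alpha$ on $|z|<\gamma_{\alpha}$, i.e.\ $\operatorname{Re}\big(zf'(z)/f(z)\big)>\alpha$ there. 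That is precisely the assertion that $f$ is starlike of order $\alpha$ in $|z|<\gamma_{\alpha}$.

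For sharpness I would use the extremal function $f_{0}$ of \eqref{EQN42}. Computing its logarithmic derivative gives $zf_{0}'(z)/f_{0}(z)=1+\mathcal{P}_{0}(z)=\mathcal{LP}(z)$, so $f_{0}\in\mathcal{F}_{\mathcal{LP}}$; evaluating at the positive real point $z=\gamma_{\alpha}$ and invoking Remark~\ref{EQN404} (equivalently Lemma~\ref{L41}), which gives $\min_{|z|=r}\operatorname{Re}\mathcal{LP}(z)=\mathcal{LP}(r)$, yields $\operatorname{Re}\big(zf_{0}'(z)/f_{0}(z)\big)=\mathcal{LP}(\gamma_{\alpha})$ at $z=\gamma_{\alpha}$. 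The remaining point is to verify that $\gamma_{\alpha}$ is exactly the root of $\mathcal{LP}(r)=\alpha$: rewriting $\log\frac{1+\sqrt r}{1-\sqrt r}=2\tanh^{-1}\sqrt r$ turns $\mathcal{LP}(r)=\alpha$ into $\tanh^{-1}\sqrt r=\pi\sqrt{1-\alpha}/(2\sqrt 2)$, i.e.\ $r=\tanh^{2}\!\big(\pi\sqrt{1-\alpha}/(2\sqrt 2)\big)=\gamma_{\alpha}$. Since $\mathcal{LP}(r)$ is decreasing on $[0,1)$ (as noted in the proof of Lemma~\ref{L41}), for any $r>\gamma_{\alpha}$ one gets $\mathcal{LP}(r)<\alpha$, so $f_{0}$ fails to be starlike of order $\alpha$ on $|z|<r$; hence the radius $\gamma_{\alpha}$ cannot be enlarged.

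There is no genuine analytic obstacle here, since all the work is carried by Theorem~\ref{EQN43}; the only care needed is the bookkeeping --- checking $p(0)=1$ so that $p$ is admissible for $\mathfrak{P}_{\mathcal{LP}}$, and matching the defining relation $\mathcal{LP}(\gamma_{\alpha})=\alpha$ with the $\tanh$-expression that appears in Theorem~\ref{EQN43}. I therefore expect the whole argument to fit in a few lines, which is why the text already flags the corollary as an immediate consequence.
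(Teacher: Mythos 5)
Your proposal is correct and matches the paper's route: the corollary is obtained exactly by substituting $p(z)=zf'(z)/f(z)$ into Theorem \ref{EQN43}, with sharpness furnished by the extremal function $f_{0}$ of \eqref{EQN42}, just as the paper does. Your additional verification that $\gamma_{\alpha}$ solves $\mathcal{LP}(r)=\alpha$ is the same bookkeeping already implicit in the proof of Theorem \ref{EQN43}.
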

\begin{remark}\label{EQN49}
Put $\alpha=0$ in Theorem \ref{EQN43},  we get a sharp $\mathcal{P}-$ radius for the class $\mathfrak{P}_{\mathcal{LP}}.$ Infact for the class $\mathcal{F}_{\mathcal{LP}},$ Corollary \ref{EQN45} gives sharp radius of starlikeness $\gamma_{0}=\tanh ^2(\pi/2 \sqrt{2}).$ Moreover, $r=\gamma_{0}<1$ serves as the sharp radius of univalence for the class $\mathcal{F}_{\mathcal{LP}}.$
\end{remark}

\begin{theorem}\label{EQN409}
Assume  $0<\alpha\leq 1,$ then the sharp $\mathcal{S}^{*}(1+\alpha z)-$ radius for the class $\mathcal{F}_{\mathcal{LP}}$ is the unique positive root $r_{\alpha}=\tanh ^2(\pi  \sqrt{\alpha }/2\sqrt{2})$ of the equation
\begin{align}
    2\left(\log ((1+\sqrt{r})/(1-\sqrt{r}))\right)^{2}- \alpha \pi^{2}=0,
\end{align} 
where $\alpha$ is the radius of the disc $\left\{\omega:|\omega - 1|<\alpha\right\}.$
\end{theorem}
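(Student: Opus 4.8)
The plan is to reduce this radius problem to a single inclusion between the image region $\mathcal{LP}(\mathbb{D}_{r})$ and the fixed disc $\{\omega:|\omega-1|<\alpha\}$, which is precisely $(1+\alpha z)(\mathbb{D})$. Fix $f\in\mathcal{F}_{\mathcal{LP}}$ and set $p(z)=zf'(z)/f(z)$, so that $p\prec\mathcal{LP}$ with $p(0)=1$. By definition, the $\mathcal{S}^{*}(1+\alpha z)$-radius is the largest $r$ for which every such $f$ satisfies $zf'(z)/f(z)\prec 1+\alpha z$ on $\mathbb{D}_{r}$; since $1+\alpha z$ is univalent and the base points agree ($p(0)=1$), the standard subordination criterion makes this equivalent to the image containment $p(\mathbb{D}_{r})\subseteq\{\omega:|\omega-1|<\alpha\}$. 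Because $p\prec\mathcal{LP}$ forces $p(\mathbb{D}_{r})\subseteq\mathcal{LP}(\mathbb{D}_{r})$, it suffices (and by extremality will also be necessary) to find the largest $r$ with $\mathcal{LP}(\mathbb{D}_{r})\subseteq\{\omega:|\omega-1|<\alpha\}$, i.e. with $\max_{|z|\leq r}|\mathcal{LP}(z)-1|\leq\alpha$.

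Next I would locate the point of $\mathcal{LP}(\mathbb{D}_{r})$ farthest from $1$. Since $\mathcal{LP}(z)-1=\mathcal{P}_{0}(z)$ and the series of the Introduction gives $\mathcal{P}_{0}(z)=-(8/\pi^{2})\sum_{n=1}^{\infty}(1/n)\bigl(\sum_{k=0}^{n-1}1/(2k+1)\bigr)z^{n}$, every Taylor coefficient of $\mathcal{P}_{0}$ is a negative real number. Hence for $|z|\leq r$ one has $|\mathcal{P}_{0}(z)|\leq\sum_{n=1}^{\infty}|a_{n}|r^{n}=-\mathcal{P}_{0}(r)=|\mathcal{P}_{0}(r)|$, with equality at $z=r$; this is exactly the content of Remark \ref{EQN404}, consistent with Lemma \ref{L41} placing the minimal real part of $\mathcal{LP}$ on the positive real axis. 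Therefore $\max_{|z|\leq r}|\mathcal{LP}(z)-1|=|\mathcal{P}_{0}(r)|=(2/\pi^{2})(\log((1+\sqrt{r})/(1-\sqrt{r})))^{2}$, attained at the real point $z=r$.

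The inclusion then holds iff $(2/\pi^{2})(\log((1+\sqrt{r})/(1-\sqrt{r})))^{2}\leq\alpha$, that is, iff $2(\log((1+\sqrt{r})/(1-\sqrt{r})))^{2}\leq\alpha\pi^{2}$. The left-hand side is strictly increasing in $r$ on $(0,1)$ and tends to $+\infty$ as $r\to 1^{-}$, so for each $\alpha\in(0,1]$ the equation $2(\log((1+\sqrt{r})/(1-\sqrt{r})))^{2}=\alpha\pi^{2}$ has a unique positive root $r_{\alpha}\in(0,1)$, yielding the critical radius. Solving $\log((1+\sqrt{r})/(1-\sqrt{r}))=\pi\sqrt{\alpha}/\sqrt{2}$ for $\sqrt{r}$ gives $\sqrt{r}=\tanh(\pi\sqrt{\alpha}/2\sqrt{2})$, whence $r_{\alpha}=\tanh^{2}(\pi\sqrt{\alpha}/2\sqrt{2})$. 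For sharpness I would use the extremal $f_{0}$ of \eqref{EQN42}, for which $p\equiv\mathcal{LP}$: at $z=r_{\alpha}$ we get $p(r_{\alpha})=\mathcal{LP}(r_{\alpha})=1-\alpha$, so $|p(r_{\alpha})-1|=\alpha$ lands on $\partial\{|\omega-1|<\alpha\}$; consequently no $r>r_{\alpha}$ keeps $p(\mathbb{D}_{r})$ inside the disc, and $r_{\alpha}$ cannot be enlarged.

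The only genuinely delicate step is the farthest-point computation, where I must be sure the most distant point of $\mathcal{LP}(\mathbb{D}_{r})$ from $1$ is the real point $\mathcal{LP}(r)$ rather than some point with large imaginary part on the flanks of the parabola. The sign-definiteness of the Taylor coefficients of $\mathcal{P}_{0}$ settles this at once, and it dovetails with Lemma \ref{L41}; the remaining work (inverting the transcendental equation and checking monotonicity and uniqueness of the root) is routine and I would not grind through it in detail.
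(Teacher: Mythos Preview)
Your proof is correct and follows essentially the same route as the paper: reduce to the inclusion $\mathcal{LP}(\mathbb{D}_r)\subset\{|\omega-1|<\alpha\}$ via the Schwarz function, identify $\max_{|z|\leq r}|\mathcal{P}_0(z)|=|\mathcal{P}_0(r)|$, solve the resulting transcendental inequality, and invoke $f_0$ for sharpness. The one minor difference is that you justify the farthest-point computation directly from the sign of the Taylor coefficients of $\mathcal{P}_0$, whereas the paper simply cites Remark~\ref{EQN404}; your argument is a clean self-contained substitute for that remark.
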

\vskip -2cm
\begin{proof}
In view of Remark \ref{EQN404}, for the circle $|z|=r<1,$ we have
\begin{align}\label{EQN47}
 \displaystyle{\max_{|z|\leq r<1}|\mathcal{LP}(z)|}=1-\frac{2}{\pi^{2}}\left(\log\left(\frac{1+\sqrt{r}}{1-\sqrt{r}}\right)\right)^{2} = \mathcal{LP}(r),
\end{align} 
which is a decreasing function. Infact $\mathcal{LP}(r)=0$ if and only if  $r=\tanh ^2(\pi/2\sqrt{2})\approx 0.6469\ldots.$ 
As $f\in {\mathcal{F}}_{\mathcal{LP}},$ then there exists a Schwarz's function 
$w(z)$ with $w(0)=0,$ so that 
\[\frac{zf'(z)}{f(z)}=\mathcal{LP}(w(z)).\] Assume $w(z)=Re^{i\theta}$ where $R\leq r<1.$ Now observe that for $0<\alpha\leq 1,$ equation \eqref{EQN47} yields
\[\left|\mathcal{LP}(z)-1\right|\leq |\mathcal{LP}(R)-1|\leq |\mathcal{LP}(r)-1|=|\mathcal{P}_{0}(r)|\leq \alpha, \] provided $r\leq \tanh ^2(\pi  \sqrt{\alpha }/2\sqrt{2})=r_{\alpha}.$ Further, at $z_{0}=r_{\alpha},$ the function ${f}_{0}(z)$  (defined in  \eqref{EQN42}) such that $z{f_{0}}'(z)/ f_{0}(z)=\mathcal{LP}(z),$ works as the extremal function.  
\end{proof}

As a consequence of Theorem \ref{EQN409}, $\mathcal{S}^{*}(1+\alpha z)-$ radii for some well-known Ma-Minda subclasses of starlike functions, namely, $\mathcal{S}^{*}_{e},$ $\mathcal{S}^{*}_{s},$ $\mathcal{S}^{*}_{\varrho},$ $\mathcal{S}^{*}_{\wp},$ $\mathcal{S}^{*}_{\rho},$ $\mathcal{S}^{*}_{SG}$ and $\mathcal{S}^{*}_{{N}_{e}}$ (see Table \ref{EQNTable2} in Appendix)
are stated in Corollary \ref{EQN410}. Moreover, sharpness of Corollary \ref{EQN410} is illustrated by \textbf{Fig.} \ref{fig:2}.

\begin{corollary}\label{EQN410}
Let $f\in\mathcal{A}$ belong to $\mathcal{F}_{\mathcal{LP}},$ then the following radii are sharp for the class $\mathcal{F}_{\mathcal{LP}},$ (see \textbf{Fig.} \ref{fig:2}) 
\begin{enumerate}[(i)]
    \item The $\mathcal{S}^{*}_{e}-$radius is $r_{1}=\tanh ^2( \lambda \pi),$ where $\lambda=(1/2) \sqrt{(e-1)/2 e}.$
    \item The $\mathcal{S}^{*}_{s}-$radius is $r_{2}=\tanh ^2(\pi/\lambda),$ where $\lambda=2 \sqrt{2 \csc 1}.$
    \item The $\mathcal{S}^{*}_{\varrho}-$radius is $r_{3}=\tanh ^2(\pi \lambda/ 2 ),$ where $\lambda=\sin(1/2).$
    \item The $\mathcal{S}^{*}_{\wp}-$radius is $r_{4}=\tanh^{2}(\pi/2\sqrt{2 e}).$
    \item The $\mathcal{S}^{*}_{\rho}-$radius is $r_{5}=\tanh ^2(\pi\sqrt{\lambda}/2),$ where $\lambda=(1/2) \sinh ^{-1}1.$ 
    \item The $\mathcal{S}^{*}_{SG}-$radius is $r_{6}=\tanh ^2(\lambda \pi/2\sqrt{2}),$ where $\lambda= \sqrt{(e-1)/(e+1)}.$
    \item The $\mathcal{S}^{*}_{{N}_{e}}-$radius is $r_{7}=\tanh ^2(\pi /2 \sqrt{3})$.
\end{enumerate} 
\end{corollary}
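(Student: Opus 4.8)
The plan is to get all seven radii by specialising Theorem \ref{EQN409}, so the argument is mostly bookkeeping. Recall that Theorem \ref{EQN409} gives, for $f\in\mathcal{F}_{\mathcal{LP}}$, the sharp bound $|zf'(z)/f(z)-1|<\alpha$ on the disc $|z|<r_{\alpha}:=\tanh^{2}(\pi\sqrt{\alpha}/2\sqrt{2})$, extremised by $f_{0}$ of \eqref{EQN42}, which satisfies $zf_{0}'(z)/f_{0}(z)=\mathcal{LP}(z)$. Writing the seven Ma--Minda functions as $\phi_{1}(z)=e^{z}$, $\phi_{2}(z)=1+\sin z$, $\phi_{3}(z)=\cosh\sqrt{z}$, $\phi_{4}(z)=1+ze^{z}$, $\phi_{5}(z)=1+\sinh^{-1}z$, $\phi_{6}(z)=2/(1+e^{-z})$, $\phi_{7}(z)=1+z-z^{3}/3$, the task reduces to finding, for each $j$, the radius $\alpha_{j}$ of the largest disc centred at $1$ that is contained in $\phi_{j}(\mathbb{D})$, and then reading off $r_{j}=\tanh^{2}(\pi\sqrt{\alpha_{j}}/2\sqrt{2})$.

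First I would record $\alpha_{j}$. Each $\phi_{j}$ has real Taylor coefficients, is univalent, and is strictly increasing on $(-1,1)$, and its closest boundary point to $1$ is $\phi_{j}(-1)$, so $\alpha_{j}=\min_{|z|=1}|\phi_{j}(z)-1|=1-\phi_{j}(-1)$; this yields $1-e^{-1}$, $\sin 1$, $1-\cos 1=2\sin^{2}(1/2)$, $e^{-1}$, $\sinh^{-1}1$, $(e-1)/(e+1)$ and $2/3$, respectively. Each is the well-known inscribed-disc radius for the corresponding region (exponential, sine, $\cosh\sqrt{z}$, cardioid, $1+\sinh^{-1}z$, modified sigmoid, nephroid), available from the cited literature or re-derivable by a short direct estimate of $\min_{|z|=1}|\phi_{j}(z)-1|$.

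Next comes the two-sided estimate for the $\mathcal{S}^{*}(\phi_{j})$-radius of $\mathcal{F}_{\mathcal{LP}}$. Since $\alpha_{j}$ is the distance from $1$ to $\partial\phi_{j}(\mathbb{D})$, the disc $\{|w-1|<\alpha_{j}\}$ lies in $\phi_{j}(\mathbb{D})$; hence if $|zf'(z)/f(z)-1|<\alpha_{j}$ on a disc $|z|<\rho$, then $g:=zf'/f$ maps it into $\phi_{j}(\mathbb{D})$, so $\phi_{j}^{-1}\circ g$ is a Schwarz function there (by the Schwarz lemma, using $\phi_{j}(0)=1=g(0)$ and univalence of $\phi_{j}$), giving $g\prec\phi_{j}$ and $f\in\mathcal{S}^{*}(\phi_{j})$ on $|z|<\rho$. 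Combined with Theorem \ref{EQN409} this makes the $\mathcal{S}^{*}(\phi_{j})$-radius at least $r_{j}$. For sharpness I would test $f_{0}$: as $\mathcal{LP}$ is real and strictly decreasing on $(0,1)$ with $\mathcal{LP}(r_{j})=1-\alpha_{j}=\phi_{j}(-1)$, the value $\mathcal{LP}(r)$ is real and strictly less than $\phi_{j}(-1)$ for every $r>r_{j}$; since $\phi_{j}$ has real coefficients and is univalent and increasing on $(-1,1)$, the intersection $\phi_{j}(\mathbb{D})\cap\mathbb{R}$ equals $\phi_{j}((-1,1))$, whose left endpoint is $\phi_{j}(-1)$, so $\mathcal{LP}(r)\notin\phi_{j}(\mathbb{D})$. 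Hence $r_{j}$ cannot be enlarged.

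Finally, inserting the seven values of $\alpha_{j}$ into $\tanh^{2}(\pi\sqrt{\alpha_{j}}/2\sqrt{2})$ and simplifying yields $r_{1},\dots,r_{7}$ in the stated forms; for instance $\alpha_{1}=(e-1)/e$ gives $\pi\sqrt{\alpha_{1}}/2\sqrt{2}=(1/2)\sqrt{(e-1)/2e}\,\pi=\lambda\pi$, which is part (i), and the other six simplify the same way. The only genuinely non-routine step is the first one, the identification of $\alpha_{j}$: for each $j$ one must verify that the nearest boundary point of $\phi_{j}(\mathbb{D})$ to $1$ is exactly $\phi_{j}(-1)$, so that both the inscribed disc has radius $1-\phi_{j}(-1)$ and the sharpness value $\mathcal{LP}(r_{j})$ falls precisely on $\partial\phi_{j}(\mathbb{D})$; these seven short, case-by-case geometric computations are already in the cited references and carry essentially all the content beyond Theorem \ref{EQN409}.
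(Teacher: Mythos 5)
Your proposal is correct and follows essentially the same route as the paper: the corollary is obtained by specialising Theorem \ref{EQN409} to the largest disc $\{w:|w-1|<\alpha_j\}$ inscribed in each Ma--Minda region $\phi_j(\mathbb{D})$ (with $\alpha_j=1-\phi_j(-1)$ taken from the cited literature), and your computed values of $\alpha_j$ and the resulting $r_j=\tanh^2\bigl(\pi\sqrt{\alpha_j}/2\sqrt{2}\bigr)$, together with the sharpness check via $f_0$ and $\mathcal{LP}(r_j)=\phi_j(-1)$, match the stated radii.
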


\vskip -0.79cm

\begin{figure}[H]
    \centering
  \subfloat[\centering]{\includegraphics[width=0.27\textwidth]{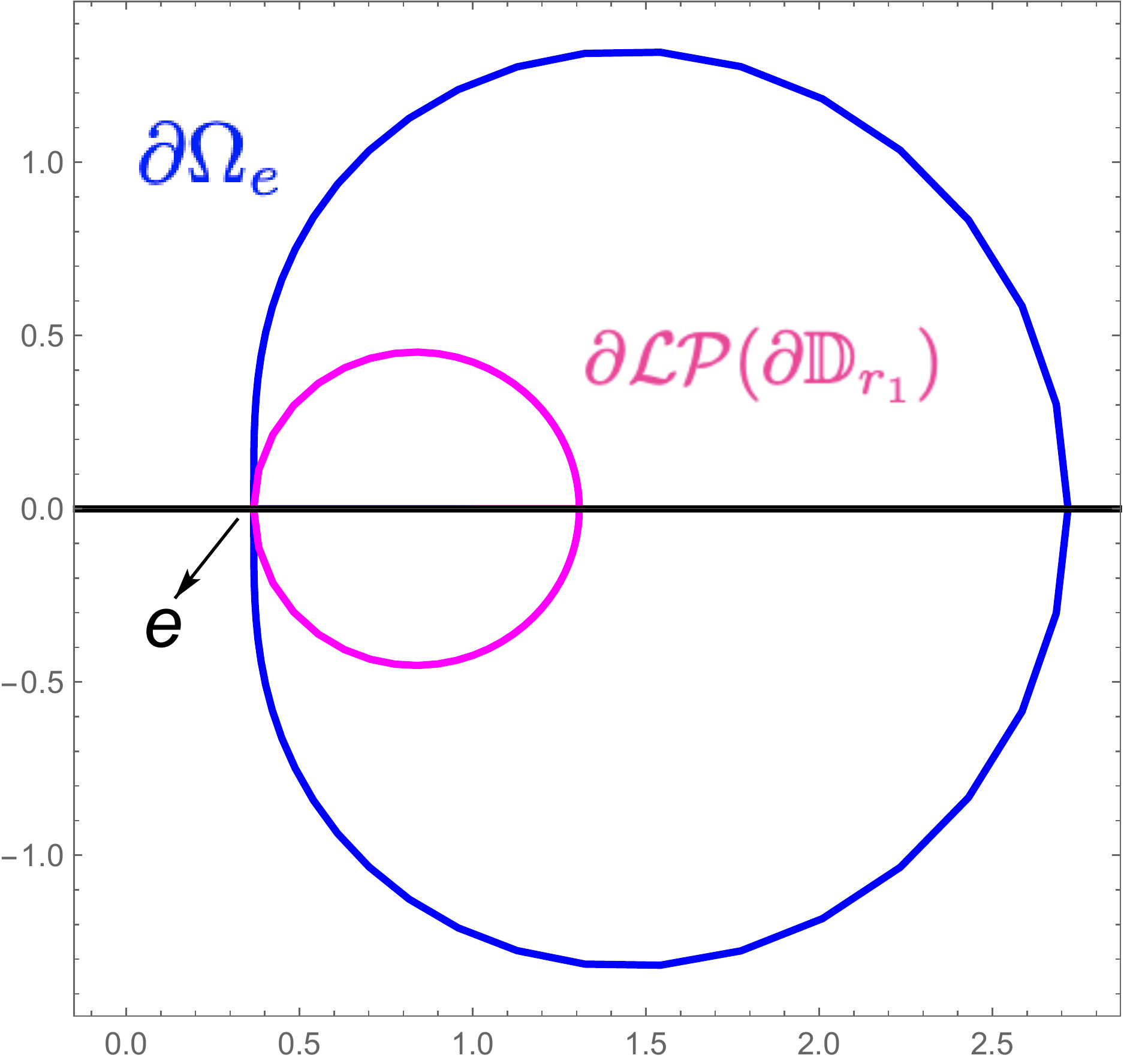}}  \qquad \quad
  \subfloat[\centering]{\includegraphics[width=0.21\textwidth]{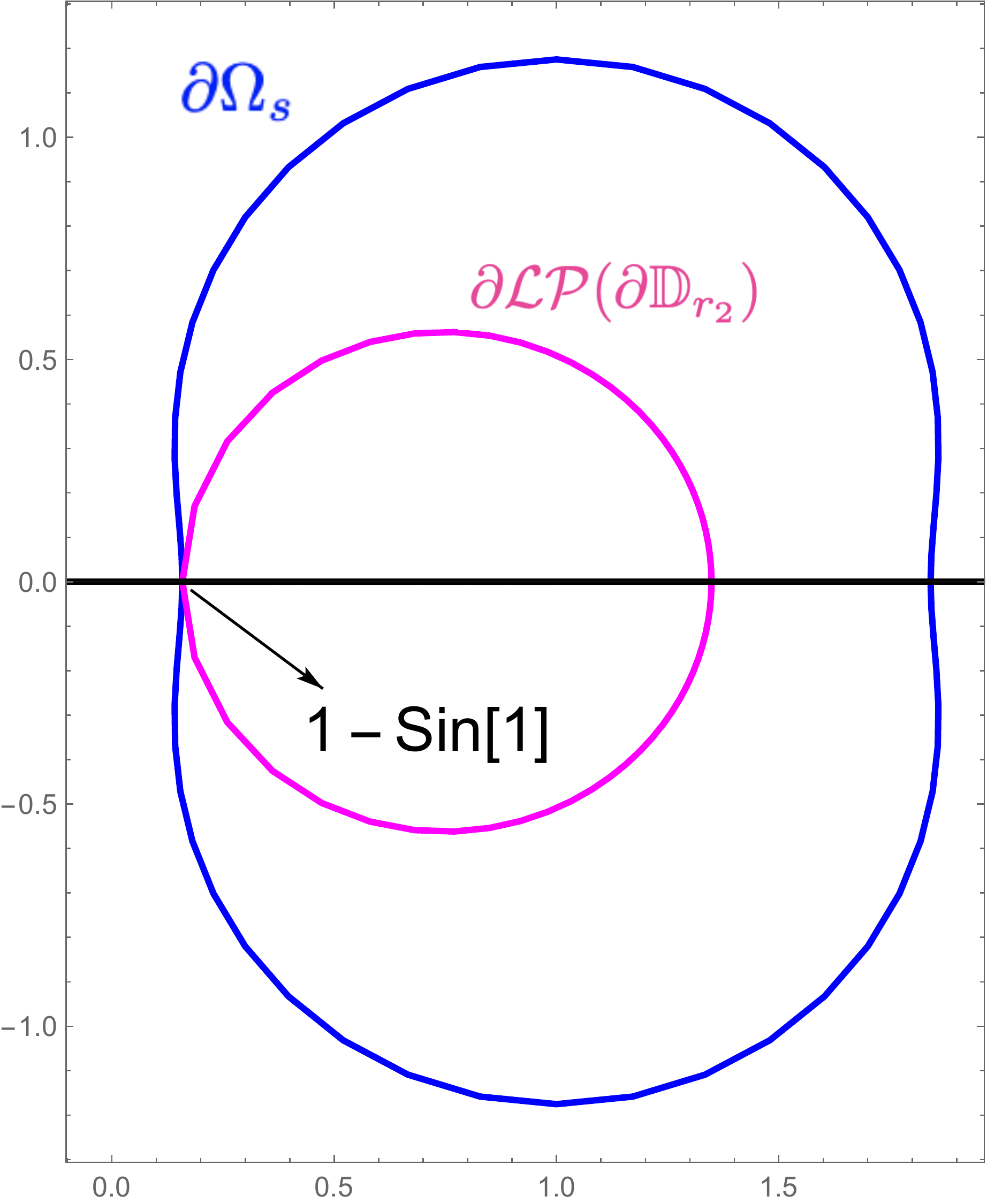}} \qquad
  \subfloat[\centering]{\includegraphics[width=0.36\textwidth]{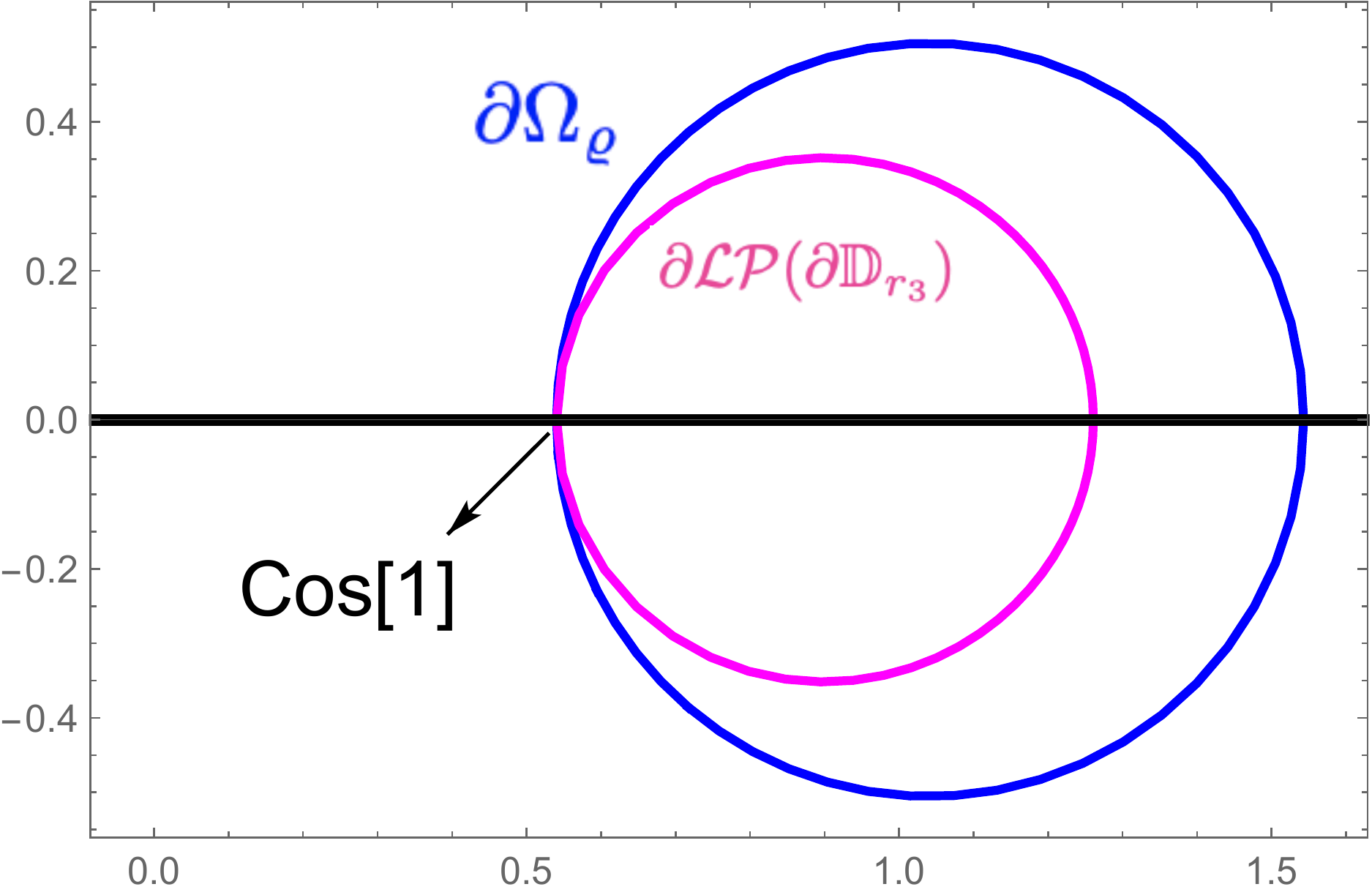}}
\end{figure}
\begin{figure}[H]
    \centering
  \subfloat[\centering]{\includegraphics[width=0.28\textwidth]{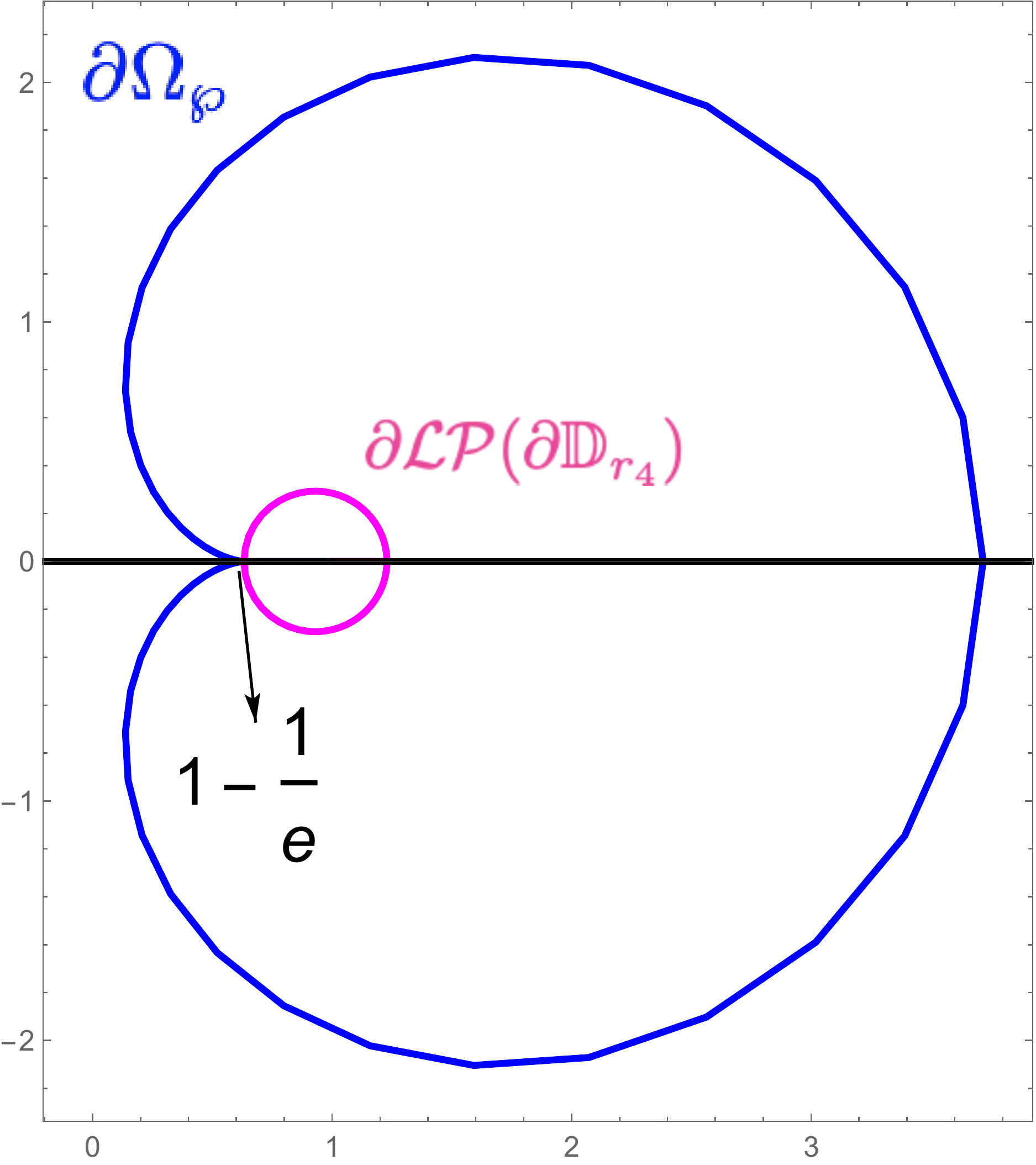}}\qquad \qquad
  \subfloat[\centering]{\includegraphics[width=0.21\textwidth]{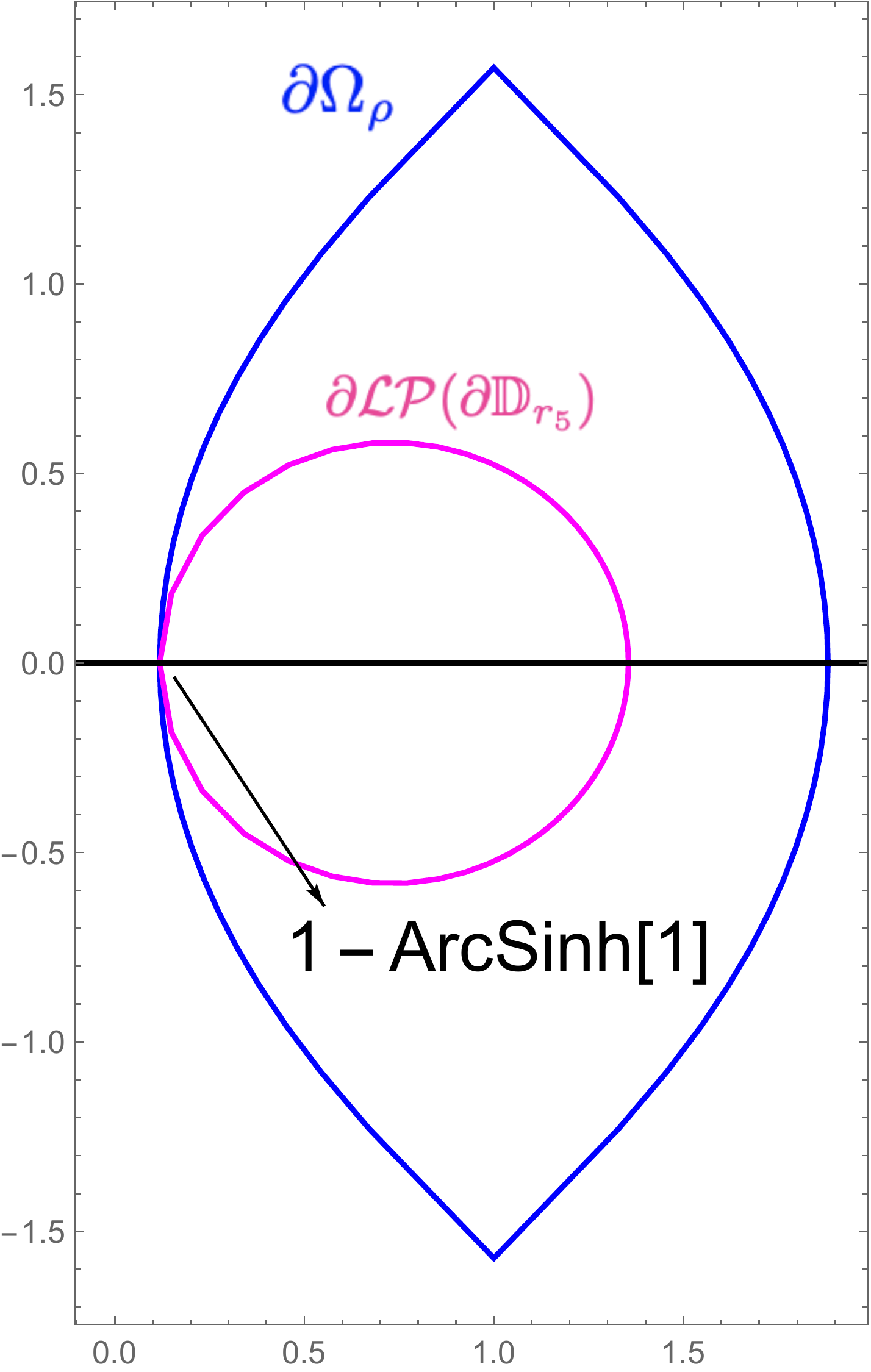}} \qquad
   \subfloat[\centering]{\includegraphics[width=0.34\textwidth]{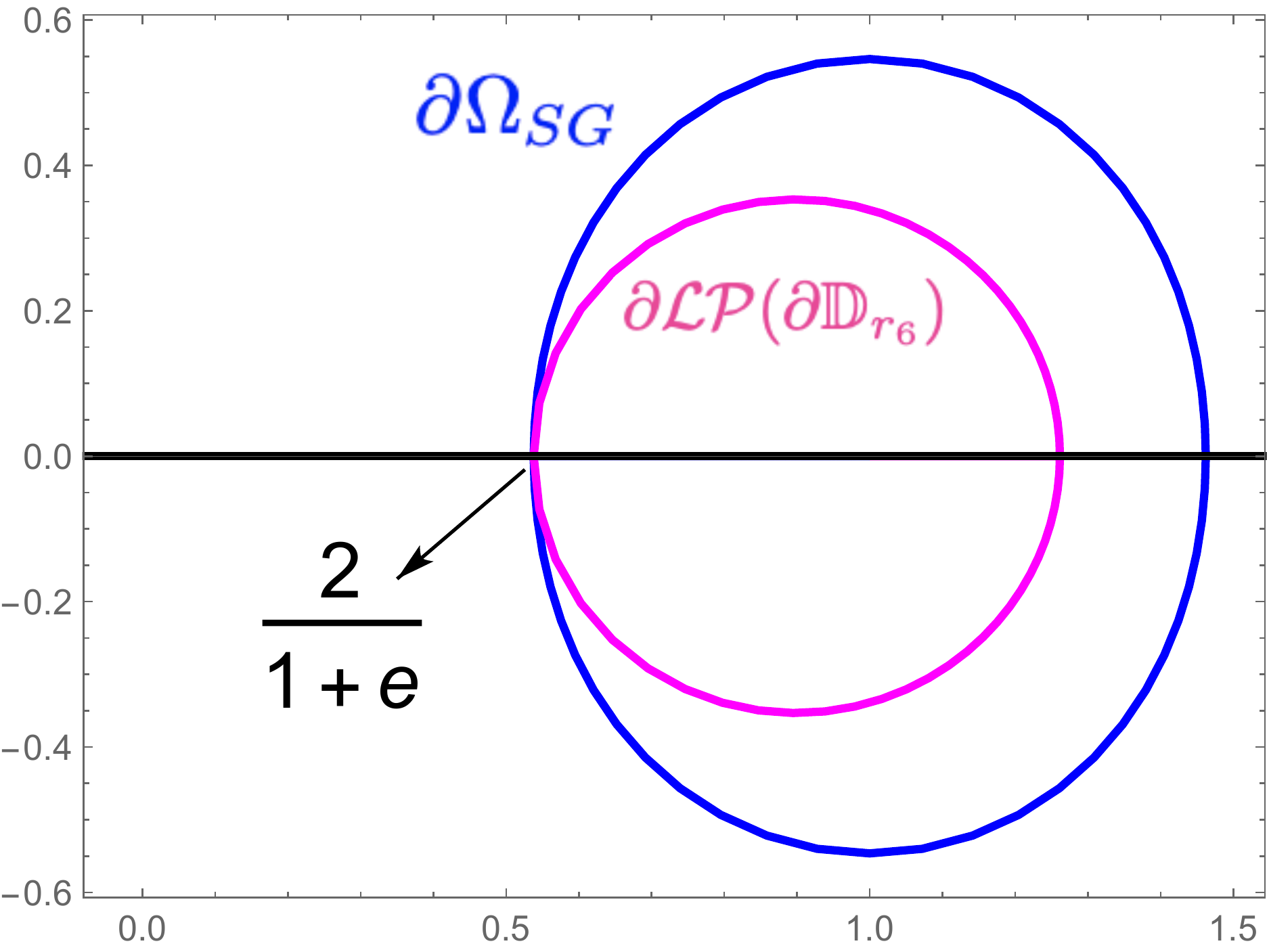}}\qquad  \quad \qquad
    \subfloat[\centering]{\includegraphics[width=0.21\textwidth]{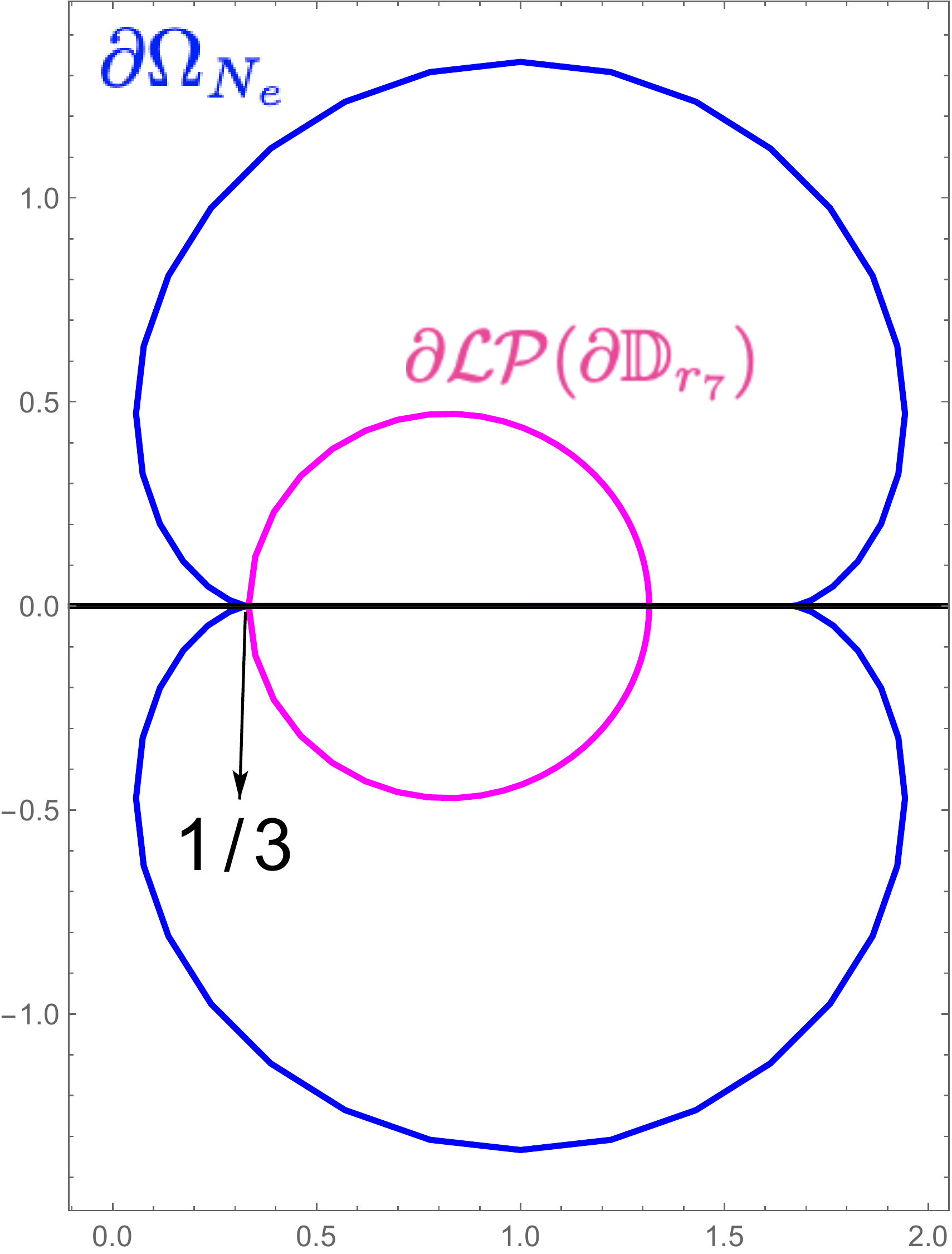}} \qquad  \qquad 
  \caption{Images depicting sharpness of radius result in Corollary \ref{EQN410}.}
\label{fig:2}
\end{figure}

In the next corollary we obtain the sharp $\mathcal{S}^{*}(\beta)-$ radius for the class $\mathcal{F}_{\mathcal{LP}}.$
\begin{corollary}\label{EQN408}
Suppose $0\leq\beta < 1$ and $f\in\mathcal{F}_{\mathcal{LP}},$ then sharp $\mathcal{S}^{*}(\beta)-$ radius is $\tanh ^2(\pi  \sqrt{\beta }/2 \sqrt{2}).$
\end{corollary}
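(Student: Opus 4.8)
The plan is to view this corollary as the $\alpha=\beta$ specialization of Theorem~\ref{EQN409}, with $\mathcal{S}^{*}(\beta)$ read as the Ma--Minda class $\mathcal{S}^{*}(1+\beta z)=\{f\in\mathcal{A}:|zf'(z)/f(z)-1|<\beta\}$, whose image region is the disc $\{\omega:|\omega-1|<\beta\}$. With this reading the asserted radius $\tanh^{2}(\pi\sqrt{\beta}/2\sqrt{2})$ is precisely the one produced by Theorem~\ref{EQN409} on putting $\alpha=\beta$, so for $0<\beta<1$ there is essentially nothing new to prove; for a self-contained write-up one only needs to transcribe the short argument of Theorem~\ref{EQN409}.

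Concretely, I would fix $f\in\mathcal{F}_{\mathcal{LP}}$, write $zf'(z)/f(z)=\mathcal{LP}(w(z))$ for a Schwarz function $w$ with $|w(z)|\le|z|=r$, and use the estimate established in the proof of Theorem~\ref{EQN409},
\[
\left|\frac{zf'(z)}{f(z)}-1\right|=|\mathcal{P}_{0}(w(z))|\le|\mathcal{P}_{0}(r)|=\frac{2}{\pi^{2}}\left(\log\frac{1+\sqrt{r}}{1-\sqrt{r}}\right)^{2}.
\]
Hence $f$ maps $|z|<r$ into $\{\omega:|\omega-1|<\beta\}$ as soon as the right-hand side is at most $\beta$; using $\log\frac{1+\sqrt{r}}{1-\sqrt{r}}=2\tanh^{-1}\sqrt{r}$ this inequality rearranges to $\tanh^{-1}\sqrt{r}\le\pi\sqrt{\beta}/(2\sqrt{2})$, i.e.\ $r\le r_{\beta}:=\tanh^{2}(\pi\sqrt{\beta}/2\sqrt{2})$. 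For sharpness I would test the extremal $f_{0}$ of \eqref{EQN42}, for which $zf_{0}'(z)/f_{0}(z)-1=\mathcal{P}_{0}(z)$: at the real point $z_{0}=r_{\beta}$ one computes $|\mathcal{P}_{0}(r_{\beta})|=\beta$ exactly, so the radius cannot be enlarged.

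It remains to dispose of the endpoint $\beta=0$. There $\mathcal{S}^{*}(0)=\mathcal{S}^{*}(1+0\cdot z)$ forces $zf'(z)/f(z)\equiv 1$, hence contains only the identity $f(z)=z$, and since a typical member of $\mathcal{F}_{\mathcal{LP}}$ (for instance $f_{0}$) is not the identity on any disc, the $\mathcal{S}^{*}(0)$-radius is $0=\tanh^{2}(\pi\sqrt{0}/2\sqrt{2})$, in agreement with the stated formula. I do not expect any genuine obstacle here: the only delicate point is the notational identification $\mathcal{S}^{*}(\beta)=\mathcal{S}^{*}(1+\beta z)$ --- in particular the resulting radius carries $\sqrt{\beta}$, not the $\sqrt{1-\beta}$ of the starlike-of-order-$\beta$ statement in Corollary~\ref{EQN45} --- after which everything reduces to the monotonicity-plus-Schwarz-lemma routine already used throughout this section.
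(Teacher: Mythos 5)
Your proposal is correct and coincides with the paper's own (implicit) derivation: Corollary \ref{EQN408} is simply Theorem \ref{EQN409} specialized to $\alpha=\beta$, with sharpness coming from the same extremal $f_{0}$ of \eqref{EQN42} evaluated on the positive real axis. Your reading of $\mathcal{S}^{*}(\beta)$ as the disc class $\{f:|zf'(z)/f(z)-1|<\beta\}=\mathcal{S}^{*}(1+\beta z)$ is indeed the only one consistent with the stated formula and with the remark following the corollary (the starlike-of-order-$\beta$ radius would instead involve $\sqrt{1-\beta}$, as in Corollary \ref{EQN45}), and your treatment of the degenerate case $\beta=0$ is a harmless addition.
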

\begin{remark}
On replacing $\beta$ with $1-\alpha$ $(0<\alpha\leq 1),$ in Corollary \ref{EQN408} we get the radius of starlikness of order $\alpha$ obtained in Corollary \ref{EQN45}. Moreover, from Corollary \ref{EQN408}, we obtain the sharp $\mathcal{S}^{*}_{\alpha}-$radius for the class $\mathcal{F}_{\mathcal{LP}},$ where $\mathcal{S}^{*}_{\alpha}=\left\{f\in\mathcal{A}:\left|zf'(z)/f(z)-1\right|<1-\alpha\right\}.$  
\end{remark}

\begin{corollary} \label{EQN415}
Let $\eta=\sqrt{2}-1$ and suppose $f\in\mathcal{F}_{\mathcal{LP}},$ then the following holds (see \textbf{Fig.} \ref{fig:6})
\begin{enumerate}[(i)]
    \item $f\in\mathcal{S}^{*}_{\mathcal{L}}$ in $|z|<\tanh ^2\left(\pi  \sqrt{\eta }/2 \sqrt{2}\right) \approx 0.376\ldots.$
    \item $f\in\mathcal{S}^{*}_{\mathcal{RL}}$ in $|z|<\tanh ^2(\pi  \sqrt[4]{\sqrt{2\ \eta } (1-\sqrt{2 \eta })}/{2 \sqrt{2}}) \approx 0.283\ldots.$
\end{enumerate}
\end{corollary}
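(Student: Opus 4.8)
The plan is to follow the same template already used for Corollary \ref{EQN410} and Corollary \ref{EQN408}, i.e. to reduce each part to an elementary distance inequality of the form $\max_{|z|\le r}|\mathcal{LP}(z)-1| = |\mathcal{P}_0(r)| \le \rho_\mathcal{C}$, where $\rho_\mathcal{C}$ is the radius of the largest disc centred at $1$ that fits inside the target Ma–Minda domain $\mathcal{C}(\mathbb{D})$. For part (i), $\mathcal{S}^*_\mathcal{L}$ is governed by $\phi(z)=\sqrt{1+z}$, whose image is the interior of the right-hand loop of a Bernoulli-type lemniscate; the largest disc about $1$ inside it has radius $\sqrt{2}-1$ (this is the classical value, and $\eta=\sqrt{2}-1$ is exactly the constant flagged in the statement). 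For part (ii), $\mathcal{S}^*_{\mathcal{RL}}$ corresponds to the shifted/rotated lemniscate function of Mendiratta–Nagpal–Ravichandran, and the radius of the largest disc about $1$ inside $\mathcal{RL}(\mathbb{D})$ is the known quantity $\sqrt{2\eta}\,(1-\sqrt{2\eta})$ raised to the fourth root in the radical, which matches the displayed expression.

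Concretely, first I would invoke Remark \ref{EQN404} (equivalently \eqref{EQN47} in the proof of Theorem \ref{EQN409}) to record that for $f\in\mathcal{F}_{\mathcal{LP}}$ there is a Schwarz function $w$ with $zf'(z)/f(z)=\mathcal{LP}(w(z))$ and hence, since $|\mathcal{P}_0|$ attains its maximum on $|z|\le r$ at the real point $r$, one has $|zf'(z)/f(z)-1|\le |\mathcal{P}_0(r)| = \frac{2}{\pi^2}\big(\log\frac{1+\sqrt{r}}{1-\sqrt{r}}\big)^2$ for $|z|\le r$. Then, for part (i), membership in $\mathcal{S}^*_\mathcal{L}$ is guaranteed as soon as this bound is $\le \eta=\sqrt{2}-1$; solving $\frac{2}{\pi^2}\big(\log\frac{1+\sqrt{r}}{1-\sqrt{r}}\big)^2=\eta$ gives $\log\frac{1+\sqrt{r}}{1-\sqrt{r}}=\pi\sqrt{\eta/2}$, i.e. $\sqrt{r}=\tanh(\pi\sqrt{\eta}/2\sqrt{2})$, which is exactly $r=\tanh^2(\pi\sqrt{\eta}/2\sqrt{2})\approx 0.376$. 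For part (ii), the admissible threshold is the $\mathcal{RL}$-disc radius $\rho=\sqrt{\sqrt{2\eta}\,(1-\sqrt{2\eta})}$ (with $\eta=\sqrt{2}-1$), and the same algebra gives $r=\tanh^2\!\big(\pi\sqrt{\rho}/2\sqrt{2}\big)=\tanh^2\!\big(\pi\,[\sqrt{2\eta}(1-\sqrt{2\eta})]^{1/4}/2\sqrt{2}\big)\approx 0.283$, matching the statement.

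The one genuinely non-routine ingredient is the identification of the two geometric constants $\eta=\sqrt{2}-1$ and $\sqrt{2\eta}(1-\sqrt{2\eta})$ as the radii of the largest discs centred at $1$ inside $\mathcal{S}^*_\mathcal{L}(\mathbb{D})$ and $\mathcal{S}^*_{\mathcal{RL}}(\mathbb{D})$ respectively; these are standard facts about the lemniscate and shifted-lemniscate classes (see the references on $\mathcal{S}^*_\mathcal{L}$ and $\mathcal{S}^*_{\mathcal{RL}}$ in Table \ref{EQNTable2}), so I would simply cite them rather than rederive the distance-to-boundary minimisation — a computation entirely parallel in spirit to Lemma \ref{EQN406}. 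Everything else is the monotone-function bookkeeping already exploited throughout Section 2.

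Finally, sharpness in both parts follows as in Theorem \ref{EQN409}: the extremal is $f_0$ from \eqref{EQN42}, for which $zf_0'(z)/f_0(z)=\mathcal{LP}(z)$, so at the real point $z_0$ equal to the claimed radius the quantity $zf_0'(z_0)/f_0(z_0)-1 = \mathcal{P}_0(z_0)$ lands exactly on the boundary circle of the inscribed disc, hence on $\partial(\mathcal{S}^*_\mathcal{L}(\mathbb{D}))$ (resp. $\partial(\mathcal{S}^*_{\mathcal{RL}}(\mathbb{D}))$), showing the radius cannot be enlarged. I expect no further obstacle; the proof is short enough that, as with the later parts of Theorem \ref{EQN46}, it could even be compressed to a sentence per item.
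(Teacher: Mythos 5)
Your derivation of the two radii is correct and is essentially the paper's route: apply Theorem \ref{EQN409} (via Remark \ref{EQN404}, so that $|zf'(z)/f(z)-1|\le|\mathcal{P}_0(r)|$ on $|z|\le r$) with $\alpha$ taken to be the radius of the largest disc centred at $1$ contained in the lemniscate region (namely $\eta=\sqrt2-1$) and in the $\mathcal{RL}$ region (namely $\sqrt{\sqrt{2\eta}\,(1-\sqrt{2\eta})}$), the latter two constants being quoted from the literature on $\mathcal{S}^*_{\mathcal{L}}$ and $\mathcal{S}^*_{\mathcal{RL}}$. Solving $\frac{2}{\pi^2}\bigl(\log\frac{1+\sqrt r}{1-\sqrt r}\bigr)^2=\alpha$ then gives exactly the stated values $\tanh^2(\pi\sqrt\alpha/2\sqrt2)$.

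The final paragraph on sharpness, however, is wrong. That $zf_0'(z_0)/f_0(z_0)-1=\mathcal{P}_0(z_0)$ lands on the boundary circle $|w-1|=\alpha$ of the inscribed disc does not place it on the boundary of the target domain: the inscribed disc is internally tangent to that domain only at isolated points, and $\mathcal{LP}(r)=1-\alpha$ is not one of them. For instance, for part (i) the disc $|w-1|<\sqrt2-1$ touches the lemniscate $|w^2-1|=1$ only at $w=\sqrt2$, whereas $\mathcal{LP}(r_1)=2-\sqrt2$ satisfies $|w^2-1|=4\sqrt2-5<1$, i.e.\ it lies strictly inside; in fact the whole image $\mathcal{LP}(\mathbb{D}_{r_1})$ stays strictly inside the lemniscate. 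The corollary does not assert sharpness, and the paper's remark immediately after it (with Fig.~\ref{fig:6}) explicitly notes that these radii can be further improved, so your extremal-function argument should be deleted rather than repaired; the genuinely sharp radii would require minimising the distance from the actual boundary curve $\mathcal{LP}(|z|=r)$ to the target boundaries, not from the circumscribing circle $|w-1|=|\mathcal{P}_0(r)|$.
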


\begin{remark}
From \textbf{Fig.} \ref{fig:6}, it is evident that the radii obtained in Corollary \ref{EQN415} can be further improved.
\end{remark} 
\begin{figure}[ht]
  \centering
\subfloat[\centering]{\includegraphics[width=0.35\textwidth]{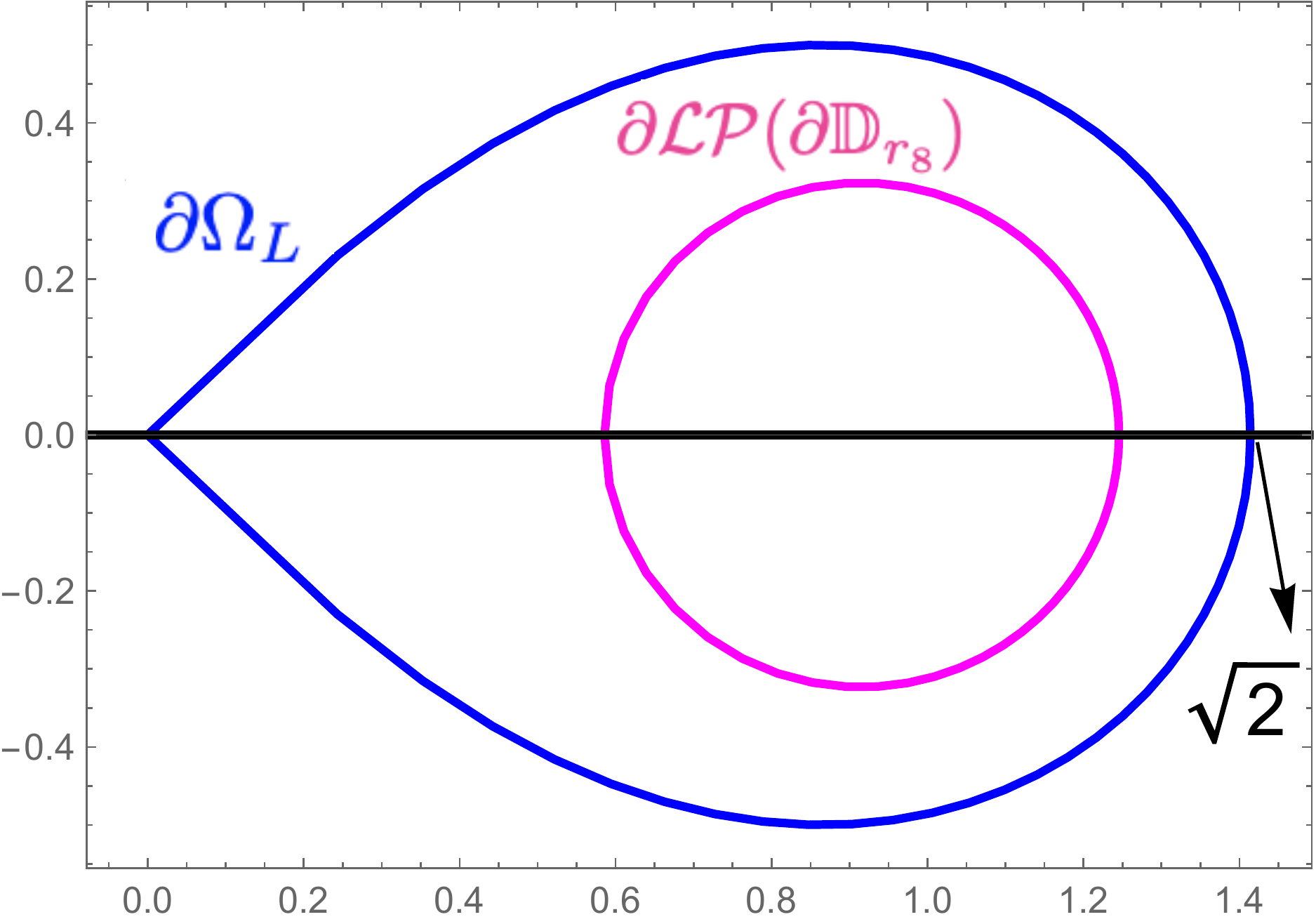}}  \qquad \qquad
  \subfloat[\centering]{\includegraphics[width=0.35\textwidth]{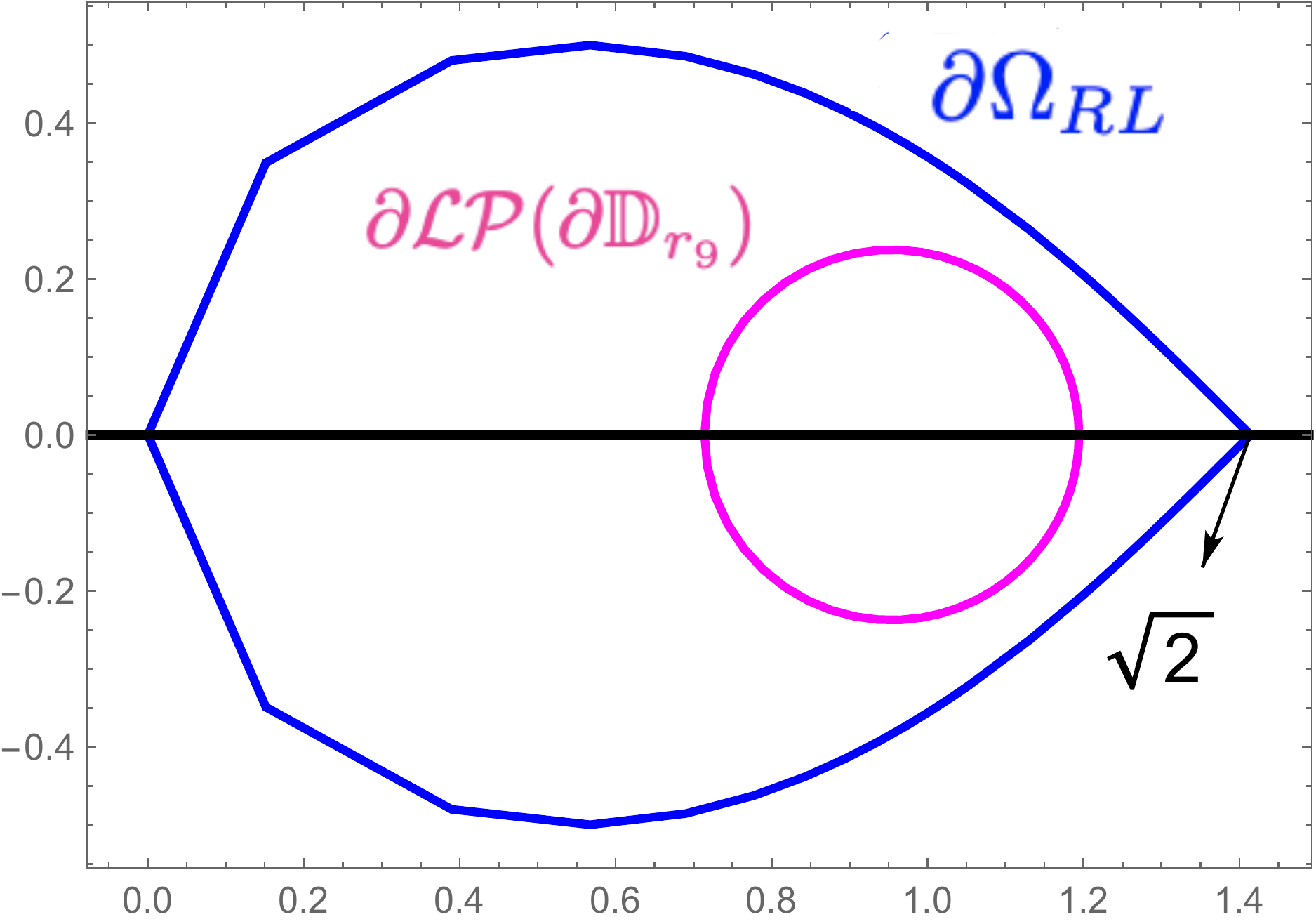}} 
\caption{Above figures correspond to Corollary \ref{EQN415}, with radii $r_{8}$ and $r_{9}$ given by: 
(a) $r_{8}\approx0.376\ldots$ (b) $r_{9}\approx0.283\ldots.$}
 \label{fig:6}
\end{figure}

\noindent We now define the class $\mathfrak{F}$ constructed with the help of ratios of two analytic functions $f,g\in\mathcal{A},$ studied by Mundalia and Kumar \cite{Mundalia n Sivaprasad(2022)}.
\[\mathfrak{F}=\left\{f\in\mathcal{A}:\operatorname{Re}\left(\frac{f(z)}{g(z)}\right)>0 \text{ } \& \text{ } \operatorname{Re}\left(\frac{(1-z)^{1+A}g(z)}{z}\right)>0, -1 \leq A \leq 1\right\}.\] In patricular, for  $A=-1$ and $A=1,$ the class $\mathfrak{F}$ reduces to the following classes, respectively,
\[\mathfrak{F}_{1}=\left\{f\in\mathcal{A}:\operatorname{Re}\left(\frac{f(z)}{g(z)}\right)>0 \text{ } \& \text{ } \operatorname{Re}\left(\frac{g(z)}{z}\right)>0\right\}\]
and 
\[\mathfrak{F}_{2}=\left\{f\in\mathcal{A}:\operatorname{Re}\left(\frac{f(z)}{g(z)}\right)>0 \text{ } \& \text{ } \operatorname{Re}\left(\frac{(1-z)^{2}g(z)}{z}\right)>0\right\}.\]
\noindent For proving  the next theorem, we require the following lemma of Ravichandran et al. \cite{V.Ravi}. 
\begin{lemma} \label{l11} 
If $p\in\mathcal{P}_{n}[A,B],$ then for $|z|=r$
\[\left|p(z)-\frac{1-A B r^{2n}}{1-B^{2}r^{2n}}\right| \leq \frac{|A-B|r^{n}}{1-B^{2}r^{2n}}.\]
Particularly, if $p\in\mathcal{P}_{n}(\alpha),$ then 
\[\left|p(z)-\frac{1+(1-2 \alpha)r^{2n}}{1-r^{2n}}\right|\leq \frac{2(1-\alpha)r^{n}}{1-r^{2n}}.\]	
\end{lemma}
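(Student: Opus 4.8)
The statement to be proved is Lemma~\ref{l11}, the two-parameter and order-$\alpha$ disk-distortion estimates for functions in $\mathcal{P}_n[A,B]$ and $\mathcal{P}_n(\alpha)$. Since this is attributed to Ravichandran et al., the cleanest route is a self-contained subordination argument rather than appealing to the cited paper. The plan is to start from the membership $p \prec (1+Az)/(1+Bz)$ restricted to the subclass with expansion $p(z) = 1 + c_n z^n + c_{n+1} z^{n+1} + \cdots$, and to exploit the fact that the $n$-fold symmetry forces the Schwarz function to satisfy $|w(z)| \le |z|^n$ rather than merely $|z|$.

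First I would write $p(z) = (1 + A w(z))/(1 + B w(z))$ for a Schwarz function $w$, and observe that because $p$ has no terms of degree $1, \dots, n-1$, the associated $w$ vanishes to order $n$ at the origin; hence by the Schwarz lemma applied to $w(z)/z^{n-1}$ (or directly to the analytic function $w(z)^{1/n}$-type normalization) one gets $|w(z)| \le |z|^n = r^n$ on $|z| = r$. Next I would recall the elementary image fact that the M\"obius map $\zeta \mapsto (1 + A\zeta)/(1 + B\zeta)$ carries the disk $\{|\zeta| \le \rho\}$ onto the disk centered at $(1 - AB\rho^2)/(1 - B^2\rho^2)$ with radius $|A-B|\rho/(1 - B^2\rho^2)$; this is a standard computation (complete the square, or use that M\"obius maps send circles to circles and track the two real endpoints $\zeta = \pm\rho$). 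Combining these with $\rho = r^n$ gives exactly
\[
\left| p(z) - \frac{1 - ABr^{2n}}{1 - B^2 r^{2n}} \right| \le \frac{|A-B|\,r^n}{1 - B^2 r^{2n}},
\]
since the disk $\{|\zeta| \le r^n\}$ contains $w(z)$ and the image disk is monotone in $\rho$ in the appropriate sense. For the particular case I would then set $A = 1 - 2\alpha$ and $B = -1$, so $\mathcal{P}_n[1-2\alpha, -1] = \mathcal{P}_n(\alpha)$, and substitute into the general inequality: the center becomes $(1 + (1-2\alpha)r^{2n})/(1 - r^{2n})$ and the radius becomes $2(1-\alpha)r^n/(1 - r^{2n})$, which is the claimed second estimate.

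The main obstacle, such as it is, is the justification that the Schwarz function attached to a $p$ with vanishing low-order coefficients really does satisfy $|w(z)| \le |z|^n$: one must check that $w$ itself (not just $p$) inherits the order-$n$ vanishing, which follows by matching Taylor coefficients in $p(z)(1 + Bw(z)) = 1 + Aw(z)$ and using $A \neq B$. After that the argument is purely the geometry of M\"obius images of disks, which is routine; I would state it as a one-line lemma or simply cite it. I would also note in passing that the boundary cases $|A| = 1$ or $|B| = 1$ need the estimate interpreted as a limit (the image "disk" degenerates), but for $-1 \le B < A \le 1$ with $r < 1$ the denominator $1 - B^2 r^{2n}$ stays positive, so no difficulty arises.
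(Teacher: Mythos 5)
Your proof is correct: reducing to a Schwarz function that vanishes to order $n$ (hence $|w(z)|\le |z|^{n}$), computing the M\"obius image of the disk $|\zeta|\le r^{n}$ under $\zeta\mapsto (1+A\zeta)/(1+B\zeta)$ (center $(1-ABr^{2n})/(1-B^{2}r^{2n})$, radius $|A-B|r^{n}/(1-B^{2}r^{2n})$), and then specializing $A=1-2\alpha$, $B=-1$ yields exactly the two stated estimates. The paper gives no proof of this lemma---it simply cites Ravichandran, R\o nning and Shanmugam---and your argument is the standard one behind that citation; the only superfluous point is your caveat about $|A|=1$ or $|B|=1$: since $r<1$ the disk $|\zeta|\le r^{n}$ stays away from the pole $\zeta=-1/B$, so no limiting interpretation is needed and the case $B=-1$ used for $\mathcal{P}_{n}(\alpha)$ is covered directly.
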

\begin{theorem}
Let $-1\leq A\leq 1,$ and suppose  $f\in\mathcal{F}_{\mathcal{LP}},$ then the sharp $\mathfrak{F}-$radius is given by
\[\mathcal{R}_{\mathfrak{F}}(\mathcal{F}_{\mathcal{LP}})=\frac{1}{2 A+3}\left(\sqrt{A^2+12 A+28}-(5+A)\right)=:R_{\mathfrak{F}}.\]
\end{theorem}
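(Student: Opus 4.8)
The strategy is to reduce the $\mathfrak{F}$-radius question to a pointwise estimate on $\operatorname{Re}(zf'(z)/f(z))$ obtained by writing $f(z)/g(z)$ and $(1-z)^{1+A}g(z)/z$ as Carath\'eodory functions, then invoking Lemma \ref{EQN406} to find the largest disc $\mathcal{D}(a,r_a)\subset\Omega_{\mathcal{LP}}$ whose boundary values of $zf'(z)/f(z)$ are realized. Concretely, for $f\in\mathfrak{F}$ write $p_1(z)=f(z)/g(z)$ and $p_2(z)=(1-z)^{1+A}g(z)/z$, both in $\mathcal{P}$. Logarithmic differentiation gives
\[
\frac{zf'(z)}{f(z)}=\frac{zp_1'(z)}{p_1(z)}+\frac{zp_2'(z)}{p_2(z)}+(1+A)\frac{z}{1-z}+1.
\]
For each term I would apply the standard sharp bounds: $|zp_j'(z)/p_j(z)|\le 2r/(1-r^2)$ for $p_j\in\mathcal{P}$ (from the $\mathcal{P}[A,B]$ estimate of Lemma \ref{l11} applied with the derivative's subordinating disc, or the classical $|p'(z)|\le 2\operatorname{Re}p(z)/(1-r^2)$ bound), and for the last term the disc estimate $\bigl|(1+A)z/(1-z)+1 - c(r)\bigr|\le \rho(r)$ with $c(r)=(1+(1+A)r - r^2\cdot 0)/(1-r^2)$-type center — more cleanly, $\operatorname{Re}\bigl((1+A)z/(1-z)\bigr)\ge -(1+A)r/(1-r)$ and one tracks the full disc $\{(1+A)w/(1-w):|w|\le r\}$, which is a disc centered on the real axis. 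Summing, $zf'(z)/f(z)$ lies in a disc $\mathcal{D}(a(r),\varrho(r))$ with explicit $a(r),\varrho(r)$ that are rational in $r$.

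The next step is to impose $\mathcal{D}(a(r),\varrho(r))\subseteq\Omega_{\mathcal{LP}}$. Here I would use Lemma \ref{EQN406}: since the relevant radii turn out small, one expects to be in the regime $a<1/2$ is \emph{not} what governs — rather, checking the claimed answer $R_{\mathfrak{F}}=\frac{1}{2A+3}(\sqrt{A^2+12A+28}-(5+A))$, one sees it is the root of a quadratic, which strongly suggests the binding constraint is the \emph{flat side / vertex line} of the parabola, i.e. the condition $a(r)+\varrho(r)\le 3/2$ coming from $\max_{\Omega_{\mathcal{LP}}}\operatorname{Re}\omega = 3/2$ (equivalently the $1/2<a<3/2$ branch of Lemma \ref{EQN406}, where $r_a=3/2-a$). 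So the plan is: show the disc $\mathcal{D}(a(r),\varrho(r))$ has $a(r)\in(1/2,3/2)$ for $r$ up to the claimed bound, so that containment in $\Omega_{\mathcal{LP}}$ is \emph{equivalent} to $a(r)+\varrho(r)\le 3/2$; then solve $a(r)+\varrho(r)=3/2$ for $r$ and verify it is the stated quadratic root. The center-plus-radius sum $a(r)+\varrho(r)$ should simplify to something like $1 + \dfrac{(3+A)r}{1-r}$ or $1+\dfrac{2r}{1-r}+\dfrac{(1+A)r}{1-r}$ after collecting the three contributions; setting this $\le 3/2$ and clearing denominators yields $(2A+3)r^2 + (10+2A)r - 1 \le 0$ (or a sign-variant), whose positive root is exactly $R_{\mathfrak{F}}$.

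For sharpness I would exhibit extremal $f,g$: take $g(z)$ so that $(1-z)^{1+A}g(z)/z = (1+z)/(1-z)$ evaluated along the right direction and $f(z)/g(z)=(1+z)/(1-z)$, i.e. choose both Carath\'eodory factors to be the Koebe-type extremals $(1\pm z)/(1\mp z)$ with signs aligned to the negative real axis, and the Schwarz function $w(z)=-z$ (or $+z$) so that all three terms attain their extreme real parts simultaneously at $z=R_{\mathfrak{F}}$; then $zf'(z)/f(z)=3/2$ there, hitting $\partial\Omega_{\mathcal{LP}}$, so the radius cannot be enlarged.

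\textbf{Main obstacle.} The delicate point is \emph{not} the algebra but justifying that the three disc estimates can be added and that the resulting disc is genuinely the sharp region for $zf'(z)/f(z)$ — in particular verifying that the worst case for $\operatorname{Re}(zf'(z)/f(z))$ is attained with all constituent functions extremal in the \emph{same} direction (so the centers and radii literally add rather than merely bound), and confirming $a(r)$ stays in $(1/2,3/2)$ on the whole interval so that Lemma \ref{EQN406}'s second branch (the straight edge $\operatorname{Re}\omega<3/2$), not the curved parabolic arc, is the operative constraint. If instead $a(r)$ dipped below $1/2$ for small $r$, one would have to use the more complicated first branch of $r_a$ involving $\zeta(\eta)$, and the answer would not be a clean quadratic root — so the fact that the stated answer \emph{is} such a root is both a guide and a consistency check for this step.
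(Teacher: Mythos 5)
Your proposal follows essentially the same route as the paper: the same factorization $f(z)=zp_{1}(z)p_{2}(z)(1-z)^{-(1+A)}$ and logarithmic differentiation, the same disc bound $\left|zf'(z)/f(z)-(1+Ar^{2})/(1-r^{2})\right|\le (5+A)r/(1-r^{2})$ via Lemma \ref{l11}, and containment in $\Omega_{\mathcal{LP}}$ through the second branch of Lemma \ref{EQN406} (which does apply, since $a(r)=(1+Ar^{2})/(1-r^{2})\ge 1>1/2$ for $A\ge -1$), leading to exactly your quadratic $(2A+3)r^{2}+(10+2A)r-1\le 0$ and its root $R_{\mathfrak{F}}$. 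Your stated ``main obstacle'' is not an issue: the triangle inequality alone gives the containment direction (no simultaneous extremality is needed), and sharpness is settled, as in the paper, by the explicit extremal $f_{\mathfrak{F}}(z)=z(1+z)^{2}/(1-z)^{3+A}$ (both Carath\'eodory factors equal to $(1+z)/(1-z)$), whose $zf'(z)/f(z)$ at $z=r$ equals the rightmost point $\bigl(1+(5+A)r+Ar^{2}\bigr)/(1-r^{2})$ of that disc and exceeds $3/2$ once $r>R_{\mathfrak{F}}$.
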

\begin{proof}
Since $f\in\mathfrak{F},$ then by definition of the class $\mathfrak{F},$ we have $f(z)=p_{1}(z)g(z)$ and $g(z)=zp_{2}(z)(1-z)^{-(1+A)},$ where for each $i=1,2,$ $p_{i}:\mathbb{D}\to\mathbb{C}$ are analytic functions such that $p_{i}(0)=1$ and $\operatorname{Re}p_{i}(z)>0.$ This leads to  $f(z)=zp_{1}(z)p_{2}(z)(1-z)^{-(1+A)},$ and as a consequence of logarithmic differentiation, we obtain
\[\frac{zf'(z)}{f(z)}=\frac{1+Az}{1-z}+\frac{zp_{1}'(z)}{p_{1}(z)}+\frac{zp_{2}'(z)}{p_{2}(z)}.\]
For each $-1\leq A\leq 1$,  Lemma \ref{l11} leads to, 
\begin{equation}\label{EQN411}
    \left|\frac{zf'(z)}{f(z)}-\frac{1+Ar^{2}}{1-r^{2}}\right|\leq \frac{(5+A)r}{1-r^{2}}=R.
\end{equation}
Further, for each  $|z|=r\leq R_{\mathfrak{F}},$ one can observe that 
\begin{equation}\label{EQN412}
    \frac{1}{2}\leq 1 \leq a =\frac{1+Ar^{2}}{1-r^{2}}\leq \frac{1+A R_{\mathfrak{F}}^{2}}{1-R_{\mathfrak{F}}^{2}}<\frac{3}{2}.
\end{equation}
Infact inequalities \eqref{EQN411} and \eqref{EQN412} yields the inequality,
\[\frac{(5+A)r}{1-r^{2}}\leq \frac{3}{2}-\frac{1+Ar^{2}}{1-r^{2}},\]
provided $r\leq R_{\mathfrak{F}}.$ 
Due to Lemma \ref{EQN406}, it is clear that the disc $|u-a|<R$  lies in $\Omega_{\mathcal{LP}}.$ Further, at $z_{0}=R_{\mathfrak{F}}$ the  function $f_{\mathfrak{F}}(z)$ defined as $f_{\mathfrak{F}}(z)=z(1+z)^2/(1-z)^{3+A}$ acts as the extremal function.  
\end{proof}
\begin{corollary}
Let $f\in\mathcal{F}_{\mathcal{LP}},$ then sharp $\mathfrak{F}_{1}-$ radius and $\mathfrak{F}_{2}-$ radius for the class $\mathcal{F}_{\mathcal{LP}}$ are respectively given as 
\begin{enumerate}[(i)]
    \item $\mathcal{R}_{\mathfrak{F}_{1}}(\mathcal{F}_{\mathcal{LP}})=\sqrt{17}-4 \approx 0.123...$
    \item  $\mathcal{R}_{\mathfrak{F}_{2}}(\mathcal{F}_{\mathcal{LP}})=(\sqrt{41}-6)/5\approx 0.080...$
\end{enumerate}
\end{corollary}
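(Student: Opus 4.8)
The strategy is to specialize the main theorem (the $\mathfrak{F}$-radius result for $\mathcal{F}_{\mathcal{LP}}$) to the two extreme values $A=-1$ and $A=1$, since $\mathfrak{F}_1$ and $\mathfrak{F}_2$ are by definition exactly the classes obtained from $\mathfrak{F}$ at these endpoints. Thus the entire proof reduces to substituting $A=-1$ and $A=1$ into the closed-form expression
\[
R_{\mathfrak{F}}=\frac{1}{2A+3}\left(\sqrt{A^2+12A+28}-(5+A)\right)
\]
and simplifying the resulting surds, then invoking the sharpness already established for the parent theorem (with the extremal function $f_{\mathfrak{F}}(z)=z(1+z)^2/(1-z)^{3+A}$ specialized accordingly).

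First I would set $A=-1$: the denominator becomes $2(-1)+3=1$, the radicand becomes $1-12+28=17$, and $5+A=4$, giving $R_{\mathfrak{F}}=\sqrt{17}-4\approx 0.123$, which is part $(i)$. Next I would set $A=1$: the denominator becomes $2(1)+3=5$, the radicand becomes $1+12+28=41$, and $5+A=6$, giving $R_{\mathfrak{F}}=(\sqrt{41}-6)/5\approx 0.080$, which is part $(ii)$. In each case the extremal function is $f_{\mathfrak{F}}(z)=z(1+z)^2/(1-z)^{2}$ for $\mathfrak{F}_1$ and $f_{\mathfrak{F}}(z)=z(1+z)^2/(1-z)^{4}$ for $\mathfrak{F}_2$, so sharpness is inherited directly from the proof of the main theorem with no extra work.

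There is essentially no obstacle here: both the radius formula and its sharpness are already in hand, and the computation is a one-line arithmetic substitution of two integer values of $A$ into an explicit algebraic expression. The only point requiring a word of care is the observation that $\mathfrak{F}_1=\mathfrak{F}\big|_{A=-1}$ and $\mathfrak{F}_2=\mathfrak{F}\big|_{A=1}$ as sets of functions (which is immediate from the definitions given just above the main theorem, since $(1-z)^{1+A}$ becomes $1$ and $(1-z)^2$ respectively), so that the parent radius result applies verbatim. Hence the corollary follows at once.
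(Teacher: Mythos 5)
Your proposal is correct and matches the paper's intent exactly: the corollary is an immediate specialization of the preceding theorem, obtained by setting $A=-1$ (giving $\sqrt{17}-4$) and $A=1$ (giving $(\sqrt{41}-6)/5$) in the formula for $R_{\mathfrak{F}}$, with sharpness inherited from the extremal function $z(1+z)^2/(1-z)^{3+A}$. The arithmetic and the identification of $\mathfrak{F}_1,\mathfrak{F}_2$ with the endpoint cases are all as the paper intends.
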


\begin{theorem}
Let $\delta=(\pi  \sqrt{\beta -1}/\sqrt{2}),$ where $1<\beta<3/2,$ and suppose $f\in\mathcal{F}_{\mathcal{LP}},$ then $\mathcal{M}(\beta)-$ radius is 
$r_{\beta}=1+2 \left(\cot \delta\right)^{2}-2 |\sec \delta / (\tan^{2} \delta)|.$
\end{theorem}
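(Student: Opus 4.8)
The plan is to read off the $\mathcal{M}(\beta)$-radius directly from the sharp bound on $\operatorname{Re}(zf'(z)/f(z))$ supplied by Lemma~\ref{L41}. Since $f\in\mathcal{F}_{\mathcal{LP}}$, the subordination definition furnishes a Schwarz function $w$ with $w(0)=0$ and $|w(z)|\le|z|$ such that $zf'(z)/f(z)=\mathcal{LP}(w(z))$. By definition $f$ lies in $\mathcal{M}(\beta)$ throughout $|z|<r$ precisely when $\operatorname{Re}(zf'(z)/f(z))<\beta$ there, so the $\mathcal{M}(\beta)$-radius is the largest $r$ for which $\sup_{|z|<r}\operatorname{Re}\mathcal{LP}(w(z))\le\beta$. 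First I would invoke Remark~\ref{EQN404} (a consequence of Lemma~\ref{L41}): for $|\zeta|=\rho<1$ one has $\operatorname{Re}\mathcal{LP}(\zeta)\le\mathcal{LP}(-\rho)$, with $\mathcal{LP}(-\rho)=1+\tfrac{2}{\pi^{2}}(\tan^{-1}(2\sqrt{\rho}/(1-\rho)))^{2}$ as recorded in \eqref{EQN418}. Because $|w(z)|\le|z|=\rho$ and $\mathcal{LP}(-t)$ is increasing in $t$ (the function $\mathcal{G}(r,0)$ of Lemma~\ref{L41} is increasing), this yields the sharp estimate $\operatorname{Re}(zf'(z)/f(z))\le\mathcal{LP}(-\rho)$ on $|z|=\rho$.

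It then suffices to find the threshold radius at which this upper bound first reaches $\beta$. Since $\mathcal{LP}(-\rho)$ increases from $1$ to $3/2$ as $\rho$ runs from $0$ to $1$, the hypothesis $1<\beta<3/2$ guarantees a unique $r\in(0,1)$ solving $\mathcal{LP}(-r)=\beta$. I would solve
\[
1+\frac{2}{\pi^{2}}\left(\tan^{-1}\frac{2\sqrt{r}}{1-r}\right)^{2}=\beta,
\]
which rearranges to $\tan^{-1}(2\sqrt{r}/(1-r))=\pi\sqrt{\beta-1}/\sqrt{2}=\delta$, i.e. $2\sqrt{r}/(1-r)=\tan\delta$. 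The constraint $\beta\in(1,3/2)$ places $\delta$ in $(0,\pi/2)$, so $\tan\delta>0$ and $\sec\delta>1$.

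Setting $s=\sqrt{r}$ converts the last equation into the quadratic $\tan\delta\,s^{2}+2s-\tan\delta=0$, whose admissible root in $(0,1)$ is $s=(\sec\delta-1)/\tan\delta$, giving $r=(\sec\delta-1)^{2}/\tan^{2}\delta$. Using $\tan^{2}\delta=(\sec\delta-1)(\sec\delta+1)$ this collapses to $r=(\sec\delta-1)/(\sec\delta+1)$; rewriting in terms of $\cos\delta$ and expanding $(1-\cos\delta)^{2}/\sin^{2}\delta$ then reproduces exactly $r_{\beta}=1+2\cot^{2}\delta-2|\sec\delta/\tan^{2}\delta|$, the absolute value being cosmetic since every quantity is positive for $\delta\in(0,\pi/2)$. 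I expect the only delicate points to be the selection of the correct quadratic root (the other root is negative) and the verification that the two closed forms agree; the analytic content is entirely carried by Lemma~\ref{L41}, and the remainder is bookkeeping.

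Finally, for sharpness I would test the extremal $f_{0}$ of \eqref{EQN42}, which satisfies $zf_{0}'(z)/f_{0}(z)=\mathcal{LP}(z)$, corresponding to the identity Schwarz function $w(z)=z$. Evaluating at the boundary point $z_{0}=-r_{\beta}$, where the maximum of $\operatorname{Re}\mathcal{LP}$ on $|z|=r_{\beta}$ is attained by Lemma~\ref{L41}, gives $\operatorname{Re}(z_{0}f_{0}'(z_{0})/f_{0}(z_{0}))=\mathcal{LP}(-r_{\beta})=\beta$. Hence the bound cannot be improved on any larger disc, and $r_{\beta}$ is precisely the $\mathcal{M}(\beta)$-radius of $\mathcal{F}_{\mathcal{LP}}$.
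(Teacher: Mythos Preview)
Your argument is correct and follows essentially the same route as the paper: both invoke Lemma~\ref{L41} to obtain $\operatorname{Re}\mathcal{LP}(z)\le\mathcal{LP}(-r)=1+\tfrac{2}{\pi^{2}}(\tan^{-1}(2\sqrt{r}/(1-r)))^{2}$, set this equal to $\beta$, and appeal to $f_{0}$ for sharpness. Your write-up is in fact more complete, since you explicitly handle the Schwarz function, solve the resulting quadratic, and verify that the closed form $1+2\cot^{2}\delta-2|\sec\delta/\tan^{2}\delta|$ matches the root $(1-\cos\delta)/(1+\cos\delta)$, steps the paper leaves implicit.
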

\begin{proof}
From Lemma \ref{L41}, it can be viewed that
\begin{align*}
    \operatorname{Re}{\mathcal{LP}}(z)\leq \mathcal{LP}(-r)&=1-\frac{2}{\pi^{2}}\left( \log \left(\frac{1+i \sqrt{r}}{1-i \sqrt{r}}\right)\right)^{2}= 1 + \frac{2}{\pi^{2}} \left(\tan ^{-1}\left(\frac{2 \sqrt{r}}{1-r}\right)\right)^{2}.
    \end{align*}
As $f\in\mathcal{F}_{\mathcal{LP}},$ then assume that $zf'(z)/f(z)=p(z).$ Due to the above inequality $\operatorname{Re}p(z)\leq \mathcal{LP}(-r).$ Moreover, $\mathcal{LP}(-r)\leq\beta$ provided $r\leq r_{\beta},$ where  $r_{\beta}$ is the root of the equation $(1-\beta)\pi^{2}+2(\tan^{-1}({2\sqrt{r}/(1-r)}))^{2}=0$ for $1<\beta<3/2.$ Equality here occurs for the function $f_{0}\in\mathcal{A},$ given by \eqref{EQN42}.
\end{proof}

If $f(z)$ and $g(z)$  be analytic functions in $|z|<r,$ then $f(z)$ is said to be majorized by $g(z),$ denoted as $f(z)\ll g(z),$ in $|z|<r,$ if $|f(z)|\leq|g(z)|$ in $|z|<r.$ Equivalently, a function $f(z)$ is said to be be majorized by $g(z),$ if there exists an analytic $\Psi(z)$ with $|\Psi(z)|\leq 1$ in $\mathbb{D}$ and $f(z)=\Psi(z)f(z)$ for all $z\in\mathbb{D}.$ For recent update on majorization for starlike and convex function, see \cite{Gangania & kumar(2021),Tang n Dang(2019)}. In the next theorem, we determine sharp majorization radius for the class $\mathcal{F}_{\mathcal{LP}}.$

\begin{theorem}
Let $f\in\mathcal{A}$ and  suppose that $g\in\mathcal{F}_{\mathcal{LP}}.$ Further assume that $f(z)$ is majorized by $g(z)$ in $\mathbb{D},$ i.e $f(z) \ll g(z),$ then for $|z|\leq r_{m}\approx 0.4220\dots,$
\[|f'(z)|\leq |g'(z)|,\]
where $r_{m}$ is the unique positive root of the following equation
\begin{align}\label{EQN403}
    2 \pi ^2 r-(1-r^2) (\pi ^2-2 (\log ((1+\sqrt{r})/(1-\sqrt{r})))^{2})=0.
\end{align}
\end{theorem}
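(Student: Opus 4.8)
The plan is to follow the classical majorization argument of MacGregor, adapted to the subordination class $\mathcal{F}_{\mathcal{LP}}$. Since $f\ll g$ in $\mathbb{D}$, there is an analytic $\Psi$ with $|\Psi(z)|\le 1$ and $f(z)=\Psi(z)g(z)$; differentiating gives $f'(z)=\Psi'(z)g(z)+\Psi(z)g'(z)$, so
\[
|f'(z)|\le |\Psi(z)|\,|g'(z)|+|\Psi'(z)|\,|g(z)|.
\]
First I would invoke the Schwarz--Pick type bound $|\Psi'(z)|\le (1-|\Psi(z)|^2)/(1-|z|^2)$, valid for any self-map of the disc, to replace $|\Psi'(z)|$. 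Writing $\rho=|\Psi(z)|$, $r=|z|$, this reduces matters to showing
\[
\frac{1-\rho^2}{1-r^2}\,\frac{|g(z)|}{|g'(z)|}\le 1-\rho \qquad\Longleftrightarrow\qquad \frac{|g(z)|}{|g'(z)|}\le \frac{(1-r^2)}{1+\rho},
\]
and since the worst case is $\rho=0$, it suffices to prove $|g(z)|\le (1-r^2)|g'(z)|$ for $|z|=r\le r_m$. Because $g(z)=g'(z)\cdot g(z)/(zg'(z)/g(z))\cdot(1/z)$ is a bit awkward, I would rather write $|g(z)|/|g'(z)| = |z|/|zg'(z)/g(z)|$ and thus need
\[
\frac{r}{\bigl|\,zg'(z)/g(z)\,\bigr|}\le 1-r^2 .
\]

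The key step is then a lower bound on $|zg'(z)/g(z)|$ for $g\in\mathcal{F}_{\mathcal{LP}}$. By definition $zg'(z)/g(z)=\mathcal{LP}(w(z))$ for a Schwarz function $w$, so $|zg'(z)/g(z)|\ge \min_{|u|\le r}|\mathcal{LP}(u)|$. Here I would appeal to Remark \ref{EQN404}: the modulus of $\mathcal{LP}$ on $|u|=r$ is largest at $u=r$, and one checks correspondingly that the minimum of $|\mathcal{LP}(u)|$ over $|u|\le r$ is attained at $u=r$ as well, namely $\mathcal{LP}(r)=1-\tfrac{2}{\pi^2}(\log((1+\sqrt r)/(1-\sqrt r)))^2$, which is positive and decreasing for $r<\tanh^2(\pi/2\sqrt2)$ (this last fact is recorded in the proof of Theorem \ref{EQN409}). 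Hence it is enough to require
\[
\frac{r}{1-\tfrac{2}{\pi^2}\bigl(\log((1+\sqrt r)/(1-\sqrt r))\bigr)^2}\le 1-r^2,
\]
which on clearing denominators is exactly equation \eqref{EQN403}: $2\pi^2 r-(1-r^2)(\pi^2-2(\log((1+\sqrt r)/(1-\sqrt r)))^2)=0$. A monotonicity check shows the left side of the displayed inequality is increasing in $r$ while $1-r^2$ is decreasing, so the inequality holds precisely for $r\le r_m$, where $r_m$ is the unique positive root; numerically $r_m\approx 0.4220$, and one verifies $r_m<\tanh^2(\pi/2\sqrt2)$ so that $\mathcal{LP}(r)>0$ throughout, justifying the step above.

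Sharpness follows by exhibiting $g=f_0$ from \eqref{EQN42} (so that $zg'(z)/g(z)=\mathcal{LP}(z)$ with the extremal value attained at $z=r_m$ on the positive axis) together with $\Psi(z)=z$, i.e. $f(z)=zf_0(z)$; at $z=r_m$ one gets equality in all the estimates above, so the radius cannot be enlarged. The main obstacle I anticipate is the step asserting that $\min_{|u|\le r}|\mathcal{LP}(u)|=\mathcal{LP}(r)$: Lemma \ref{L41} and Remark \ref{EQN404} give the behaviour of the real part and of the maximum modulus, but pinning down where the \emph{minimum} modulus occurs requires a short separate argument — essentially that $\mathcal{LP}$ maps $|u|\le r$ into a convex region with $\mathcal{LP}(r)$ the nearest point to the origin — after which the remaining estimates are routine calculus on a single real variable.
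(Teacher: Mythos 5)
Your overall architecture is the same as the paper's: write $f=\Psi g$, differentiate, apply the Schwarz--Pick bound to $\Psi'$, and bound $|g(z)/g'(z)|=|z|/|zg'(z)/g(z)|$ from above by $r/\mathcal{LP}(r)$. Your anticipated obstacle about $\min_{|u|\le r}|\mathcal{LP}(u)|$ is a non-issue: no convexity argument is needed, since $|\mathcal{LP}(w(z))|=|1+\mathcal{P}_0(w(z))|\ge 1-|\mathcal{P}_0(w(z))|\ge 1-|\mathcal{P}_0(r)|=\mathcal{LP}(r)$, using only the maximum-modulus statement recorded in Remark \ref{EQN404}; this is exactly how the paper gets inequality \eqref{EQN400}.

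The genuine error is in your reduction over $\rho=|\Psi(z)|$. The inequality $|g|/|g'|\le (1-r^2)/(1+\rho)$ must hold for \emph{every} $\rho\in[0,1]$, and its right-hand side is decreasing in $\rho$, so the binding case is $\rho\to 1$, not $\rho=0$. The correct requirement is therefore $|g|/|g'|\le (1-r^2)/2$, i.e. $2r\le(1-r^2)\mathcal{LP}(r)$, which upon multiplying by $\pi^2$ is precisely \eqref{EQN403}. Your $\rho=0$ condition $r\le(1-r^2)\mathcal{LP}(r)$ is strictly weaker and does not yield $|f'|\le|g'|$ for a general majorant (take $|\Psi(z)|$ close to $1$); moreover it is \emph{not} equation \eqref{EQN403} --- your sentence ``clearing denominators is exactly \eqref{EQN403}'' silently inserts the missing factor $2$. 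For comparison, the paper's own proof commits the mirror-image slip: needing $\Phi(r,\sigma)=(1-r^2)\mathcal{LP}(r)-r(1+\sigma)\ge 0$ for all $\sigma\in[0,1]$, it tests $\max_\sigma\Phi=\Phi(r,0)$ instead of $\min_\sigma\Phi=\Phi(r,1)$. Indeed the value $0.4220\ldots$ quoted in the statement is the root of the $\sigma=0$ equation $\pi^2 r=(1-r^2)(\pi^2-2(\log((1+\sqrt r)/(1-\sqrt r)))^2)$, whereas the unique positive root of \eqref{EQN403} as printed is about $0.309$; a corrected argument (yours or the paper's) gives \eqref{EQN403} with that smaller radius. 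Finally, your sharpness sketch with $\Psi(z)=z$ does not produce equality in the chain of estimates at $r_m$ (the extremal case requires $|\Psi|$ near $1$ at the relevant point, e.g. disc automorphisms and a limiting argument), but that is secondary to the worst-case error above.
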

\vskip -1cm
\begin{proof}
Suppose $0\leq r< r^{*}=\tanh^2(\pi /2 \sqrt{2})\approx 0.646\ldots,$ then due to Remark \ref{EQN49} we conclude that, $g\in\mathcal{F}_{\mathcal{LP}}$ qualifies to be a Ma-Minda type function in $|z|< r^{*}.$
Further let $w(z)$ be a Schwarz function in $\mathbb{D}$ with $w(0)=0,$ then by definition of subordination,
\[\frac{zg'(z)}{g(z)} = \mathcal{LP}(w(z)).\]
Note that for each $|z|=r<1,$ the inequality $|\mathcal{LP}(w(z))|\leq |\mathcal{LP}(r)|$ holds. Now for $|z|=r < r^{*},$ we obtain
\begin{align}\label{EQN400}
    \left|\frac{g(z)}{g'(z)}\right| = \frac{|z|}{|\mathcal{LP}(z)|} \leq \frac{r}{1-|\mathcal{P}_{0}(r)|}=\frac{r}{\mathcal{LP}(r)}.
\end{align}
As $f(z)$ is majorized by $g(z)$ in $\mathbb{D},$ we find from the definition of majorization, \[f(z)=\psi(z)g(z).\] Upon differentiating the above equality and suitable rearrangement of terms, we obtain
\begin{align}\label{EQN401}
    f'(z)=g'(z)\left(\psi'(z)\frac{g(z)}{g'(z)}+\psi(z)\right).
\end{align} Additionally, as a result of the Schwarzian inequality, we have
\begin{align}\label{EQN402}
    |\psi'(z)|\leq \frac{1-|\psi(z)|^{2}}{1-|z|^{2}}, \quad (z\in\mathbb{D}).
\end{align}
Moreover, from equations \eqref{EQN400}-\eqref{EQN402}, we deduce
\[|f'(z)|\leq \left(|\psi(z)|+\frac{r(1-|\psi(z)|^{2})}{(1-r^{2})(\mathcal{LP}(r))}\right)|g'(z)|.\]
Substituting $|\psi(z)|=\sigma$ $(0\leq \sigma \leq 1),$ results in 
\[|f'(z)|\leq \Psi(r,\sigma)|g'(z)|,\] where 
\[\Psi(r,\sigma)=\sigma+\frac{r(1-\sigma^{2})}{(1-r^{2})(\mathcal{LP}(r))}.\]
We need to determine $r_{m}\leq r^{*}$ so that 
\[r_{m}=\max\{r\in[0,r^{*}]:\Psi(r,\sigma)\leq 1 \text{ } \forall \sigma \in [0,1]\}.\]
Equivalently if $\Phi(r,\sigma):=(1-r^2) (\mathcal{LP}(r))-r(1+\sigma)$  then we need to determine \[r_{m}=\max\{r\in[0,r^{*}]:\Phi(r,\sigma)\geq 0\text{ } \forall \sigma \in [0,1]\}.\]  Since $\partial \Phi/\partial \sigma=-r<0,$ then $\max_{\sigma\in [0,1]}{\Phi(r,\sigma)}=\Phi(r,0)=:\phi_{0}(r).$ Further it is evident that, as $\phi_{0}(0)=1>0$ and $\phi_{0}(r^{*})=-r^{*}<0,$ then there exists $r_{m}\leq r^{*},$ a smallest positive root of the equation given in \eqref{EQN403} such  that $\phi_{0}(r)\geq 0$ for each $r\in[0,r_{m}].$ This completes the proof.
\end{proof}
\noindent In 2017, Peng and Zhong \cite{Peng n Zhong(2017)}, introduced the class $\Omega \subset \mathcal{A},$ defined as 
\[\Omega=\left\{f\in\mathcal{A}:|zf'(z)-f(z)|<1/2\right\}.\] We conclude this section by determining sharp $\Omega-$radius for the class $\mathcal{F}_{\mathcal{LP}}.$ 

\begin{theorem}
Let $f\in \mathcal{F}_{\mathcal{LP}},$ then $f\in\Omega$ in $|z|<r_{\mathcal{L}}\approx 0.522\ldots$ is the smallest positive root of \[4f_{0}(r)(\log((1+\sqrt{r})/(1-\sqrt{r})))^{2}= \pi^{2}\] and 
\begin{align}\label{EQN417}
    g_{0}(z) = z  \left( \exp \int_{0}^{z} \frac{\mathcal{P}_{0}(-t)}{t}dt \right)&= z +\frac{8}{\pi ^2}z^{2}-\frac{8}{3\pi^{4}} (\pi ^2-12)z^3  +\frac{8}{135 \pi ^6}(1440\nonumber \\& \quad-360 \pi ^2+23 \pi ^4) z^4-\cdots .
\end{align}This is a sharp estimate.
\end{theorem}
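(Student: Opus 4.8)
The plan is to reduce the condition $f\in\Omega$, i.e. $|zf'(z)-f(z)|<1/2$, to a one-variable inequality in $r=|z|$, using the growth estimate for $\mathcal{F}_{\mathcal{LP}}$ from Theorem~\ref{EQN416}. First I would write $|zf'(z)-f(z)| = |f(z)|\,\bigl|\tfrac{zf'(z)}{f(z)}-1\bigr| = |f(z)|\,|\mathcal{P}_0(w(z))|$, where $w$ is the Schwarz function associated with the subordination $zf'(z)/f(z)\prec\mathcal{LP}(z)$. By Remark~\ref{EQN404} we have $|\mathcal{P}_0(w(z))|\le|\mathcal{P}_0(r)| = |\mathcal{LP}(r)-1| = 1-\mathcal{LP}(r)$ for $|z|=r$ (using that $\mathcal{LP}(r)<1$), so the task is to bound $|f(z)|$ from above. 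The Growth Theorem gives $|f(z)|\le r\exp\!\bigl(\int_0^r \mathcal{P}_0(-t)/t\,dt\bigr) = g_0(r)$, which is precisely the function defined in \eqref{EQN417}. Hence $|zf'(z)-f(z)|\le g_0(r)\bigl(1-\mathcal{LP}(r)\bigr) = g_0(r)\cdot\tfrac{2}{\pi^2}\bigl(\log\tfrac{1+\sqrt r}{1-\sqrt r}\bigr)^2$, and requiring this to be at most $1/2$ is exactly the stated equation $4g_0(r)\bigl(\log\tfrac{1+\sqrt r}{1-\sqrt r}\bigr)^2 = \pi^2$ at the boundary.

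Next I would verify that this equation has a unique relevant positive root $r_{\mathcal{L}}\approx 0.522$ and that the inequality holds for all smaller $r$. Set $h(r) := 4g_0(r)\bigl(\log((1+\sqrt r)/(1-\sqrt r))\bigr)^2 - \pi^2$. At $r=0$ we have $h(0) = -\pi^2 < 0$, while both $g_0(r)$ and $\bigl(\log((1+\sqrt r)/(1-\sqrt r))\bigr)^2$ are strictly increasing in $r$ on $(0,1)$ (the first because $\mathcal{P}_0(-t)>0$ makes the integrand positive, the second by inspection), so $h$ is strictly increasing; together with $h(r)\to+\infty$ as $r\to 1^-$ this yields a unique root $r_{\mathcal{L}}\in(0,1)$, and $h(r)<0$, i.e. $f\in\Omega$, precisely for $r<r_{\mathcal{L}}$. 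A numerical check pins down $r_{\mathcal{L}}\approx 0.522$; note also $r_{\mathcal{L}}<r^{*}=\tanh^2(\pi/2\sqrt2)\approx0.646$, so the Growth Theorem is being applied in the range where $\mathcal{LP}$ behaves as a genuine Ma--Minda function.

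For sharpness I would exhibit an extremal. The natural candidate is the function whose modulus saturates the growth bound, namely $g_0$ itself from \eqref{EQN417} (equivalently, a suitable rotation $f(z) = \varepsilon^{-1}g_0(\varepsilon z)$ forcing equality at the real boundary point), for which $zf'(z)/f(z) = \mathcal{LP}(-z)$; evaluated at $z = -r_{\mathcal{L}}$ (so the argument of $\mathcal{LP}$ is the positive point $r_{\mathcal{L}}$) all the inequalities above become equalities and $|zf'(z)-f(z)| = 1/2$ exactly, so the radius cannot be enlarged.

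The main obstacle is the bookkeeping around the modulus estimate: one must be careful that $|\mathcal{P}_0(w(z))|$ is genuinely maximized at $w=r$ (real) rather than at some other point of $|w|\le r$ — this is exactly the content of Remark~\ref{EQN404}, which identifies $\max_{|z|=r}|\mathcal{LP}(z)| = |\mathcal{P}_0(r)|$ — and that the growth bound $|f(z)|\le g_0(r)$ and this modulus bound can be attained \emph{simultaneously} by the same function at the same point, which is why the rotation of $g_0$ (and not of $f_0$) is the correct extremal here. Once those two facts are lined up, the remainder is the routine monotonicity argument and a numerical root extraction.
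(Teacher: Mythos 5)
Your main estimate is exactly the paper's argument: factor $|zf'(z)-f(z)|=|f(z)|\,|zf'(z)/f(z)-1|$, bound the second factor by $|\mathcal{P}_{0}(r)|$ via Remark \ref{EQN404} and the first by the growth bound $|f(z)|\leq g_{0}(r)$ from Theorem \ref{EQN416}, and reduce membership in $\Omega$ to $g_{0}(r)|\mathcal{P}_{0}(r)|\leq 1/2$, i.e.\ to the stated root equation (you also correctly read the $f_{0}$ in the theorem statement as the growth function $g_{0}$ of \eqref{EQN417}, which is what the paper's proof actually uses). Your added monotonicity argument for uniqueness of $r_{\mathcal{L}}$ is fine and is more detail than the paper gives.

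The one genuine problem is your sharpness paragraph. For $f=g_{0}$ one has $zg_{0}'(z)/g_{0}(z)=\mathcal{LP}(-z)$, so at $z=-r_{\mathcal{L}}$ the second factor does reach its maximum $|\mathcal{P}_{0}(r_{\mathcal{L}})|$; but at that point $|g_{0}(-r_{\mathcal{L}})|=r_{\mathcal{L}}\exp\bigl(\int_{0}^{r_{\mathcal{L}}}\mathcal{P}_{0}(t)/t\,dt\bigr)=f_{0}(r_{\mathcal{L}})$, which is the \emph{minimum} of the growth estimate, not $g_{0}(r_{\mathcal{L}})$. Conversely, at $z=+r_{\mathcal{L}}$ the modulus $|g_{0}(r_{\mathcal{L}})|$ is maximal but $|zg_{0}'/g_{0}-1|=|\mathcal{P}_{0}(-r_{\mathcal{L}})|<|\mathcal{P}_{0}(r_{\mathcal{L}})|$. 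The two bounds are extremized by different boundary behaviour (the growth bound forces $w(t)\approx -t$ along the ray, the modulus bound wants $w(z)$ at the positive real point), so your claim that ``all the inequalities above become equalities'' for a rotation of $g_{0}$ is false as stated, and the sharpness of the combined estimate does not follow from your candidate. Note, however, that the paper's own proof does not establish sharpness either --- it only asserts it --- so apart from this unproved (and in fact questionable) sharpness claim, your proof coincides with the paper's.
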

\vskip -1cm
\begin{proof}
Since $f\in\mathcal{F}_{\mathcal{LP}},$ then as a consequence of Remark \ref{EQN404} for $|z|=r<1,$ we have
\[\left|\frac{zf'(z)}{f(z)}-1\right|<|\mathcal{LP}(r)-1|=|\mathcal{P}_{0}(r)|.\]
Due to the growth theorem as mentioned in \cite [Theorem 1]{Gangania n Kumar(2021)Trans} 
and Theorem \ref{EQN416}, we observe that $|f(z)|\leq g_{0}(r),$ where $g_{0}(r)$ is given by \eqref{EQN417}. Further 
\[|zf'(z)-f(z)|= |f(z)|\left|\frac{zf'(z)}{f(z)}-1\right|\leq g_{0}(r)|\mathcal{P}_{0}(r)|.\] Thus $g_{0}(r)|\mathcal{P}_{0}(r)|\leq 1/2$ provided $|z|<r_{\mathcal{L}}\approx 0.522864.$ Hence the result is established. 
\end{proof}

\subsection{Sufficient Conditions for the class $\mathcal{F}_{\mathcal{LP}}$}
In this section, we establish some sufficient conditions for the class $\mathcal{F}_{\mathcal{LP}}.$ Below, we state a Lemma given by Jack \cite{Jack(1971)}, which is utilised in context of the class under consideration.
\begin{lemma}\cite[Lemma 1, p.470] {Jack(1971)} \label{L42}
Let $\nu(z)$ be a non-constant analytic function in $\mathbb{D},$ such that $\nu(0)=0.$ If $|\nu(z)|$ attains its maximum value on the circle $|z|=r$ at a point $z_{0},$ then $z_{0}\nu'(z_{0})= k \nu(z_{0}),$ where $k$ is real and $k\geq 1.$ 
\end{lemma}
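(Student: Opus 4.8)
The plan is to reduce the statement to a boundary maximum-modulus analysis after a rescaling that makes everything analytic across the circle $|\zeta|=1$. First I would note that non-constancy forces $\nu(z_0)\neq 0$: otherwise the maximum of $|\nu|$ on $\overline{\mathbb{D}_r}$ would be $0$, making $\nu\equiv 0$. Hence $k:=z_0\nu'(z_0)/\nu(z_0)$ is well defined. I would then set $w(\zeta):=\nu(r\zeta)/\nu(z_0)$, which is analytic on $|\zeta|<1/r$ — a full neighbourhood of $\overline{\mathbb{D}}$ — and satisfies $w(0)=0$, $|w(\zeta)|\le 1$ on $\overline{\mathbb{D}}$ with equality $|w(\zeta_0)|=1$ at the boundary point $\zeta_0:=z_0/r$, where $|\zeta_0|=1$. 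Since the logarithmic derivative is scale-invariant, $\zeta_0 w'(\zeta_0)/w(\zeta_0)=k$, so it suffices to prove the claim for $w$.

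Next I would factor out the zero at the origin. As $w(0)=0$ and $|w|\le 1$ on $\mathbb{D}$, the Schwarz Lemma gives $|w(\zeta)|\le|\zeta|$, so $\omega(\zeta):=w(\zeta)/\zeta$ is analytic on a neighbourhood of $\overline{\mathbb{D}}$, with $|\omega|\le 1$ there and $|\omega(\zeta_0)|=|w(\zeta_0)|/|\zeta_0|=1$. Writing $w=\zeta\omega$ and differentiating yields the key identity
\[
k=\frac{\zeta_0 w'(\zeta_0)}{w(\zeta_0)}=1+\frac{\zeta_0\omega'(\zeta_0)}{\omega(\zeta_0)},
\]
so the problem becomes: show $\zeta_0\omega'(\zeta_0)/\omega(\zeta_0)$ is real with nonnegative real part, where $\zeta_0$ is a boundary point at which $|\omega|$ attains its maximum value $1$ over $\overline{\mathbb{D}}$. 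It is exactly this step of dividing out the origin zero that upgrades the eventual conclusion from ``$\RE k\ge 0$'' to ``$k\ge 1$''.

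For the two remaining facts I would differentiate $|\omega|^2$ tangentially and radially at $\zeta_0$, which is legitimate precisely because $\omega$ is analytic across the circle. Parametrising the circle as $\zeta=e^{i\theta}$ and using that $\zeta_0$ maximises $|\omega(e^{i\theta})|^2$ gives the first-order condition $\RE[i\,\zeta_0\omega'(\zeta_0)\overline{\omega(\zeta_0)}]=0$; since $|\omega(\zeta_0)|=1$ this reads $\IM[\zeta_0\omega'(\zeta_0)/\omega(\zeta_0)]=0$, so $k$ is real. For the size, I would examine $t\mapsto|\omega(t\zeta_0)|^2$ on $[0,1]$: it is bounded by $1$ and equals $1$ at the endpoint $t=1$, so its one-sided derivative there is nonnegative, giving $\RE[\zeta_0\omega'(\zeta_0)\overline{\omega(\zeta_0)}]\ge 0$, i.e. $\RE[\zeta_0\omega'(\zeta_0)/\omega(\zeta_0)]\ge 0$. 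Combining with the identity above yields $k=1+\RE[\zeta_0\omega'(\zeta_0)/\omega(\zeta_0)]\ge 1$.

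The main obstacle — and the reason the rescaling at the outset is not merely cosmetic — is justifying differentiability of $\omega$ at the boundary point $\zeta_0$ where the maximum is achieved; without passing from $\nu$ on $\overline{\mathbb{D}_r}$ to $w$ on a neighbourhood of $\overline{\mathbb{D}}$, the derivative $\omega'(\zeta_0)$ need not exist and the tangential and radial first-order conditions would be unavailable. A secondary point to check is that $\omega(\zeta_0)\neq 0$, which is immediate from $|\omega(\zeta_0)|=1$ and legitimises the logarithmic-derivative manipulations.
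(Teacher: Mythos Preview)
Your argument is correct. Note, however, that the paper does not supply its own proof of this lemma: it is quoted with a citation to Jack (1971) and used as a black box, so there is nothing in the present paper to compare your approach against. Your rescaling-plus-Schwarz route---normalising so the boundary maximum sits on $|\zeta|=1$, factoring out the zero at the origin to obtain the identity $k=1+\zeta_0\omega'(\zeta_0)/\omega(\zeta_0)$, and then reading off reality and nonnegativity from the tangential and radial first-order conditions for $|\omega|^2$---is the standard modern presentation and is complete as written; the only point worth flagging explicitly (which you do address) is that analyticity of $\nu$ on all of $\mathbb{D}\supset\overline{\mathbb{D}_r}$ is what makes $\omega$ smooth across $|\zeta|=1$ and legitimises those derivative computations.
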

\begin{theorem}\label{Thm41}
Suppose $0\leq t\leq 1$ and let $f\in\mathcal{A}$ satisfy the following differential inequality
\begin{align}\label{EQN405}
   \left| t\left(1+\frac{zf''(z)}{f'(z)}\right)+(1-t)\frac{zf'(z)}{f(z)}-1\right|<\frac{1}{6}(3+2 t), \quad z\in\mathbb{D},
\end{align}
then $f\in\mathcal{F}_{\mathcal{LP}}.$
\end{theorem}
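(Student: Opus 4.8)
The plan is to use Jack's Lemma (Lemma \ref{L42}) to convert the differential inequality \eqref{EQN405} into a boundary statement. Define $\nu(z)$ by $zf'(z)/f(z) = \mathcal{LP}(\nu(z))$, or more directly set $\nu(z)$ so that $zf'(z)/f(z) - 1 = \mathcal{P}_0(\nu(z))$; then $\nu(0)=0$ and $\nu$ is analytic near the origin. To show $f\in\mathcal{F}_{\mathcal{LP}}$ it suffices to show $|\nu(z)|<1$ throughout $\mathbb{D}$. Suppose not: then there is a smallest circle $|z|=r$ and a point $z_0$ on it where $|\nu(z_0)|=1$, say $\nu(z_0)=e^{i\vartheta}$, and by Jack's Lemma $z_0\nu'(z_0) = k\,\nu(z_0)$ with $k\ge 1$.

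The next step is to compute $t(1+z_0f''(z_0)/f'(z_0)) + (1-t)z_0f'(z_0)/f(z_0) - 1$ at such a point in terms of $\nu(z_0)$ and $k$. Writing $p(z) := zf'(z)/f(z) = \mathcal{LP}(\nu(z))$, logarithmic differentiation gives $1 + zf''(z)/f'(z) = p(z) + zp'(z)/p(z)$, so the left side of \eqref{EQN405} becomes $p(z) - 1 + t\,zp'(z)/p(z)$. Now $p(z)-1 = \mathcal{P}_0(\nu(z))$ and $zp'(z) = \mathcal{P}_0'(\nu(z))\,z\nu'(z) = k\,\nu(z)\,\mathcal{P}_0'(\nu(z))$ at $z_0$. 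So at $z_0$ the quantity to be bounded below is
\[
\left| \mathcal{P}_0(e^{i\vartheta}) + \frac{t\,k\,e^{i\vartheta}\,\mathcal{P}_0'(e^{i\vartheta})}{1+\mathcal{P}_0(e^{i\vartheta})} \right|.
\]
The goal is to show this modulus is $\ge \tfrac16(3+2t)$, contradicting \eqref{EQN405}, and hence forcing $|\nu|<1$ everywhere. Since $k\ge 1$ and $t\ge 0$, one expects the worst case to be $k=1$, and one should identify the value of $\vartheta$ minimizing the modulus — plausibly $\vartheta = 0$ (the point $z=1$ on the parabola's "vertex" facing left, where $\mathcal{P}_0(1) = -2/\pi^2(\log((1+\sqrt{\cdot})/(1-\sqrt{\cdot})))^2$ is most negative, giving $\mathcal{LP}(1)=1+\mathcal{P}_0(1)$ the minimum real part) — by a monotonicity argument in the spirit of Lemma \ref{L41}. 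At that point one must evaluate $\mathcal{P}_0(1)$, $\mathcal{P}_0'(1)$ and check the inequality reduces, for the two endpoints $t=0$ and $t=1$ and by linearity/convexity in $t$ in between, to the stated constant $\tfrac16(3+2t)$; indeed at $t=0$ this should recover the bound $1/2 = |\,\mathcal{LP}(1)-1\,|$-type statement associated with $\operatorname{Re}(zf'/f)$ staying left of the vertex, and at $t=1$ it should give $5/6$.

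The main obstacle I anticipate is the boundary estimate itself: one is no longer on a disc of the variable $z$ but must minimize a modulus over the boundary circle $|\nu|=1$, i.e. over $\vartheta\in(-\pi,\pi]$, of an expression involving both $\mathcal{P}_0$ and $\mathcal{P}_0'$ evaluated on $\partial\mathbb{D}$, where $\sqrt{\cdot}$ has branch-point behavior at $\nu=\pm1$. Handling the limiting values as $\nu\to 1$ (where $\log((1+\sqrt{\nu})/(1-\sqrt{\nu}))\to+\infty$, so $\mathcal{P}_0(\nu)\to-\infty$?—in fact one must be careful: $\mathcal{P}_0(1)$ is finite equal to $1/2 - 1 = -1/2$ from $\mathcal{LP}(-1)=3/2$ and $\mathcal{LP}(1)=0$, wait: from Remark in the text $\mathcal{LP}(-1)=3/2$, and $\mathcal{LP}(r)\to 0$ as $r\to\tanh^2(\pi/2\sqrt2)$, so the relevant extreme for $|\nu|=1$ needs the actual geometry of $\mathcal{LP}(\partial\mathbb{D})$) and verifying that $\vartheta=0$ (or whichever $\vartheta$) is genuinely the minimizer will require a careful derivative computation. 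A secondary subtlety is justifying that $\nu$ extends analytically on all of $\mathbb{D}$ (not just near $0$) so that Jack's Lemma applies — this follows because $\mathcal{LP}$ is univalent on $\mathbb{D}$, hence locally invertible, and $zf'/f$ omits the value where $\mathcal{LP}$ would need $|\nu|\ge1$; standard continuation arguments close this gap. Once the boundary inequality is in hand for $k=1$, the factor $k\ge1$ only helps (it enlarges the positive perturbation term, or one checks the modulus is monotone in $k$), so the contradiction is complete and $f\in\mathcal{F}_{\mathcal{LP}}$.
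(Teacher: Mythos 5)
Your proposal takes a genuinely different route from the paper, and it cannot be completed as written: the boundary inequality you hope to verify is false for $t>0$. If you set $zf'(z)/f(z)=\mathcal{LP}(\nu(z))$ and apply Jack's Lemma at a point with $\nu(z_0)=e^{i\vartheta}$ and $z_0\nu'(z_0)=k\nu(z_0)$, the quantity to bound below is $\bigl|\mathcal{P}_0(\nu)+tk\,\nu\,\mathcal{P}_0'(\nu)/\mathcal{LP}(\nu)\bigr|$. The relevant extreme is not $\vartheta=0$ (there $\mathcal{P}_0(e^{i\vartheta})$ runs to infinity along the parabola); the boundary point of $\Omega_{\mathcal{LP}}$ nearest to $1$ is the vertex $3/2$, attained at $\nu=-1$, at focal distance $1/2$. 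With the branch $\sqrt{-1}=i$ one computes $\mathcal{P}_0(-1)=\tfrac12$, $\mathcal{P}_0'(-1)=-\tfrac1\pi$, $\mathcal{LP}(-1)=\tfrac32$, so at $\nu=-1$ the modulus equals $\tfrac12+\tfrac{2tk}{3\pi}$, which for $k=1$ is strictly less than $\tfrac12+\tfrac t3=\tfrac{3+2t}{6}$ whenever $t>0$. Since Jack's Lemma only guarantees $k\ge1$, no contradiction with \eqref{EQN405} can be drawn, and the strategy fails for every $t>0$ (for $t=0$ it reduces to the distance-from-focus fact $|\mathcal{P}_0|\ge\tfrac12$ on $|\nu|=1$ and does work). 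There is also the circularity you yourself flag: defining $\nu=\mathcal{LP}^{-1}(zf'/f)$ on all of $\mathbb{D}$ presupposes $zf'/f$ maps into $\Omega_{\mathcal{LP}}$, which is the conclusion; a first-exit argument can patch this, but the quantitative failure above is the real obstruction.

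The paper avoids all of this by not working with the Schwarz function of the subordination at all. It writes $zf'(z)/f(z)-1=\tfrac12\nu(z)$ and uses Lemma \ref{L42} to show $|\nu|<1$: at a Jack point the expression in \eqref{EQN405} becomes $\tfrac{e^{i\mu}}{2}+\tfrac{tke^{i\mu}}{2+e^{i\mu}}$, whose modulus is $\bigl|\tfrac12+\tfrac{tk}{2+e^{i\mu}}\bigr|\ge\tfrac12+\tfrac{tk}{3}\ge\tfrac{3+2t}{6}$, contradicting the hypothesis; hence $zf'/f$ lies in the disc $|w-1|<\tfrac12$. It then invokes Lemma \ref{EQN406} with $a=1$ (which gives the inscribed radius $\tfrac32-1=\tfrac12$) to conclude that this disc sits inside $\Omega_{\mathcal{LP}}$, i.e.\ $f\in\mathcal{F}_{\mathcal{LP}}$. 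The idea missing from your proposal is exactly this two-step reduction: first trap $zf'/f$ in a small disc by an elementary Jack-type estimate (where the constant $\tfrac{3+2t}{6}$ is tailored to the disc of radius $\tfrac12$), then transfer to the parabolic region via the inscribed-disc lemma, instead of attempting sharp estimates of $\mathcal{P}_0$ and $\mathcal{P}_0'$ on the unit circle with its branch points at $\pm1$.
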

\begin{proof}
Consider an analytic function $\nu(z)$ with $\nu(0)=0.$ Assume $f\in\mathcal{A}$ such that
\[\frac{zf'(z)}{f(z)} -1=\frac{1}{2}\nu(z).\] We show that $|\nu(z)|<1$ in $\mathbb{D}.$ Suppose on the contrary $|\nu(z)|\geq 1,$ then by an application of Lemma \ref{L42}, there exists $z_{0}\in\mathbb{D}$ such that  for $k\geq 1,$ $|\nu(z_{0})|=1$ and $z_{0}\nu '(z_{0}) = k \nu(z_{0}).$ Substituting  $\nu(z_{0})=e^{i\mu},$ $-\pi<\mu \leq \pi$ leads to 
\begin{align*}
   & \left|t\left(1+\frac{zf''(z)}{f'(z)}\right)+(1-t)\frac{zf'(z)}{f(z)}-1\right|
   \\ & \quad =\left|t\left(1+\frac{\nu(z_{0})}{2}+\frac{k \nu(z_{0})}{2+\nu(z_{0})}\right)+(1-t)\left(1+\frac{\nu(z_{0})}{2}\right)-1\right|
   \\ & \quad =\left|\frac{t k e^{i\mu}}{2 + e^{i\mu}}+\frac{e^{i\mu}}{2}\right| \geq \frac{1}{6}(3+2t).
\end{align*}
This is contradiction to the assumption given in \eqref{EQN405}. Thus $|\nu(z)|<1,$ which means that $zf'(z)/f(z)$ lies in the disc
$|(zf'(z)/f(z))-1|<1/2.$ Hence in view of Lemma \ref{EQN406}, (with $a=1$) required result is achieved. 
\end{proof}
For $t=1/2,$ $t=0$ and $t=1$ in Theorem \ref{Thm41}, we obtain the following corollary,
\begin{corollary}
Let $f\in\mathcal{A}$ satisfy the following differential inequalities 
\begin{enumerate}[(i)]
\item $\left|(zf'(z)/f(z)+zf''(z)/f'(z))-1\right|<4/3,$  or
\item $\left|(zf'(z)/f(z))-1\right|<1/2,$ or
\item  $\left|zf''(z)/f'(z)\right|<5/6,$ 
\end{enumerate}
 then $f\in\mathcal{F}_{\mathcal{LP}}.$
\end{corollary}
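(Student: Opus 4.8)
The plan is to recognize that this corollary is simply three distinct specializations of the master differential inequality \eqref{EQN405} of Theorem \ref{Thm41}, corresponding to the parameter values $t=1/2$, $t=0$, and $t=1$. Since Theorem \ref{Thm41} already guarantees $f\in\mathcal{F}_{\mathcal{LP}}$ whenever \eqref{EQN405} holds for some $t\in[0,1]$, it suffices to verify that each of the hypotheses (i)--(iii) coincides with \eqref{EQN405} evaluated at the appropriate value of $t$.

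First I would treat the case $t=1/2$. Substituting this value into the bracketed quantity in \eqref{EQN405} and collecting the constant terms, a common factor $1/2$ can be extracted, leaving $\tfrac{1}{2}\bigl|\,zf''(z)/f'(z)+zf'(z)/f(z)-1\,\bigr|$, while the right-hand bound becomes $\tfrac{1}{6}(3+1)=2/3$; multiplying through by $2$ then reproduces (i). Next, the choice $t=0$ collapses the bracket to $zf'(z)/f(z)-1$ and the bound to $\tfrac{1}{6}(3)=1/2$, which is precisely (ii). Finally $t=1$ reduces the bracket to $1+zf''(z)/f'(z)-1=zf''(z)/f'(z)$ and the bound to $\tfrac{1}{6}(5)=5/6$, yielding (iii). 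In every case Theorem \ref{Thm41} then delivers the conclusion $f\in\mathcal{F}_{\mathcal{LP}}$.

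I do not expect any substantive obstacle, since the argument is a direct substitution into an already-established theorem; the only mild care required is the algebraic bookkeeping in the $t=1/2$ case, where one must correctly combine the two fractional terms and factor out $1/2$ before comparing against the bound $2/3$. All of the analytic content --- that the disc $|zf'(z)/f(z)-1|<1/2$ is inscribed in $\Omega_{\mathcal{LP}}$ by Lemma \ref{EQN406} with $a=1$, together with the Jack-lemma maximum-modulus argument --- has already been discharged inside the proof of Theorem \ref{Thm41}, so the corollary requires nothing beyond recording the three parameter choices.
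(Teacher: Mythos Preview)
Your proposal is correct and exactly mirrors the paper's own treatment: the paper simply states that the corollary is obtained by taking $t=1/2$, $t=0$, and $t=1$ in Theorem~\ref{Thm41}, and your verification of the algebra in each case is accurate.
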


\section*{Conclusion}
In the present investigation, we introduce a class of analytic functions associated with certain parabolic region. In particular, we have considered a case when parabola is lying majorly in the left half plane and symmetric about real axis. The other cases namely, oblique parabolic regions are still open, for similar investigations. 

\section*{Appendix}
The classes we come across in the present investigation are listed below for the ready reference of the reader.

\begin{table}[h!]
\renewcommand{\arraystretch}{1.25}
 \caption{Special subclasses of Ma-Minda starlike functions for specific choices of $\phi(z)$} 
  \centering 
  \begin{tabular}{llll} 
  \hline 
    {\bf{Class $\mathcal{S}^{*}(\phi)$}}  & \textbf{$\phi(z)$} & $\phi(\mathbb{D})$ & \textbf{References}      \\ [0.8ex] 
    \hline 
    $\mathcal{S}^{*}_{\alpha,e}$     &   $\alpha+(1-\alpha)e^{z}$ & $\Omega_{\alpha,e}$ &  \cite{Khatter Siva n Ravi(2019)}  Khatter et al. \\
    $\mathcal{SL}^{*}(\alpha)$    &    $\alpha+(1-\alpha)\sqrt{1+z}$ & $\Omega_{\alpha,L}$  &  \cite{Khatter Siva n Ravi(2019)}  Khatter et al.  \\ 
    $\mathcal{S}^{*}_{\wp}$   &   $1+z e^z$   &  $\Omega_{\wp}$ &  \cite{Kumar n Kamaljeet (2021)} Kumar et al.   \\
     $\mathcal{S}^{*}_{SG}$     &   $2/(1+e^{-z})$ & $\Omega_{SG}$   &   \cite{Goel n Sivaprasad(2020)}   Goel et al.  \\  
     $\mathcal{S}_{s}$       &     $1+\sin z$    &     $\Omega_{s}$    & \cite{Cho n Virender(2019)} Cho et al. \\  
     $\mathcal{S}^{*}_{\rho}$    &   $1+\sinh^{-1}z$  & $\Omega_{\rho}$ &  \cite{Arora n Siva(2022)}    Arora et al.  \\ 
      $\mathcal{S}^{*}_{\varrho}$   &   $\cosh\sqrt{z}$  & $\Omega_{\varrho}$ & \cite{Mundalia n Sivaprasad(2022)}  Mundalia et al. \\ 
       $\Delta^*$                &  $z+\sqrt{1+z^2}$  & $\Omega_{\Delta}$ &   \cite{Raina n Sokol(2015)} Raina et al. \\ $\mathcal{S}^{*}_{\mathcal{L}}$       &   $\sqrt{1+z}$ & $\Omega_{L}$ &   \cite{Sokol(1996)}  Sok\'{o}\l  \text{ }et al.   \\ 
       $\mathcal{S}^{*}(A,B)$   &  $(1+Az)/(1+Bz)$  & $\Omega_{A,B}$ &\cite{Janowski}  Janowski  \\ 
      $\mathcal{S}^{*}(N_{e})$   &  $1+z-z^{3}/3$ & $\Omega_{N_{e}}$ &\cite{Swaminathan n Wani(2021)}   Wani et al.  \\  
       $\mathcal{S}^{*}_{p}$
      &  $1+(2/\pi^{2})(\log((1+\sqrt{z})/(1-\sqrt{z})))^{2}$ & $\Omega_{p}$ & \cite{Ronning(UCV 1993)}  Ronning \\  $\mathcal{S}^{*}_{\mathcal{RL}}$  &  $\sqrt{2}-\left(\sqrt{2}-1\right) \sqrt{(1-z)(1+2 \left(\sqrt{2}-1\right) z)}$ & $\Omega_{RL}$ &  \cite{Mendiratta n Nagpal(2014)}   Mendiratta et al. \\ \hline 
\end{tabular}
\label{EQNTable2}
\end{table}

\end{document}